\def\l@subsection{\@tocline{2}{0pt}{2pc}{6pc}{}}
\newtheorem{mainthm}{Theorem}
\theoremstyle{plain}
\newtheorem{lem}{Lemma}[section]
\newtheorem{thm}[lem]{Theorem}
\newtheorem{cor}[lem]{Corollary}
\newtheorem{prop}[lem]{Proposition}
\theoremstyle{definition}
\newtheorem{algo}[lem]{Algorithm}
\newtheorem{defi}[lem]{Definition}
\newtheorem{rem}[lem]{Remark}
\newtheorem{ex}[lem]{Example}
\newtheorem{mainconj}[mainthm]{Conjecture}
\numberwithin{figure}{section}
\numberwithin{table}{section}
\numberwithin{equation}{section}
\crefname{section}{Section}{Sections} 
\crefname{table}{Table}{Tables} 
\crefname{thm}{Theorem}{Theorems} 
\crefname{mainthm}{Theorem}{Theorems} 
\crefname{prop}{Proposition}{Propositions} 
\crefname{cor}{Corollary}{Corollaries} 
\crefname{lem}{Lemma}{Lemmas} 
\crefname{ex}{Example}{Examples} 
\crefname{conj}{Conjecture}{Conjectures} 
\crefname{algo}{Algorithm}{Algorithms} 
\crefname{defi}{Definition}{Definitions} 
\setlist{topsep= 2pt plus 1pt minus 1pt,itemsep= 1pt plus 1pt minus 1pt}
\setlist[itemize]{leftmargin=\parindent}
\setlist[itemize,1]{label=$\bullet$}% 
\setlist[itemize,2]{label=$\circ$}% 
\setlist[enumerate,1]{leftmargin=2em,label=\arabic*), ref=\arabic*,itemjoin*=\quad}
\newcommand{\R}{\mathbb{R}}
\newcommand{\Z}{\mathbb{Z}}
\newcommand{\F}{\mathbb{F}}
\newcommand{\Q}{\mathbb{Q}}
\newcommand{\cK}{\mathcal{K}}
\newcommand{\cS}{\mathcal{S}}
\newcommand{\PSL}{\mathrm{PSL}}
\newcommand\al{\alpha}
\newcommand\be{\beta}
\newcommand\de{\delta}
\newcommand\De{\Delta}
\newcommand\eps{\varepsilon}
\newcommand\ga{\gamma}
\newcommand\Ga{\Gamma}
\newcommand\alg{{\tt NextABC}\xspace}
\DeclareRobustCommand{\CF}{\DOTSB\gaussk@\slimits@}
\newcommand{\gaussk@}{\mathop{\vphantom{\sum}\mathpalette\bigcal@{K}}}
\newcommand{\bigcal@}[2]{%
  \vcenter{\m@th
    \sbox\z@{$#1\sum$}%
    \dimen@=\dimexpr\ht\z@+\dp\z@
    \hbox{\resizebox{!}{0.8\dimen@}{$\mathcal{K}$}}%
  }%
}
\newcommand{\cfp}{\,\mathrel{\cfracplus@}\,}% mathrel pour avoir plus d'espace
\newcommand{\cfracplus@}{%
  \sbox\z@{$\dfrac{1}{1}$}%
  \sbox\tw@{$+$}%
  \raisebox{\dimexpr\dp\tw@-\dp\z@\relax}{$+$}%
}
\newcommand{\cfd}{\mathord{\cfracdots@}}
\newcommand{\cfracdots@}{%
  \sbox\z@{$\dfrac{1}{1}$}%
  \sbox\tw@{$+$}%
  \raisebox{\dimexpr\dp\tw@-\dp\z@\relax}{$\cdots$}%
}
\let\fr=\frac
\newcommand{\cron}{\langle n\rangle_q}
\renewcommand{\mod}[1]{\ (\mathrm{mod}\ #1)}
\begin{document}

\title[$q$-deformed metallic numbers]
{Hankel continued fractions\\ and Hankel determinants\\ for $q$-deformed metallic numbers}
\date{07/02/2025}
\keywords{Hankel determinants, Hankel continued fractions, metallic numbers, golden ratio, $q$-analogues, quadratic irrationals, Gale-Robinson sequences.}

\subjclass{Primary: 05A30, 11A55, 11C20. Secondary: 11B37, 37J70.}

\author{Guo-Niu Han}
\address{%
Guo-Niu Han,
Université de Strasbourg, CNRS, IRMA, UMR~7501, Strasbourg, France}
\email{guoniu.han@unistra.fr}

\author{Emmanuel Pedon}
\address{%
Emmanuel Pedon,
Universit\'e de Reims Champagne-Ardenne,
CNRS, LMR, UMR~9008, Reims, France} 
\email{emmanuel.pedon@univ-reims.fr}

\begin{abstract}
Fix $n$ a positive integer. Take the $n$-th \emph{metallic number} 
$$\phi_n=\frac{n+\sqrt{n^2+4}}{2}$$
(e.g. $\phi_1$ is the golden number) and let $\Phi_n(q)$ be its $q$-deformation in the sense of S.~Morier-Genoud \& V.~Ovsienko. This is an algebraic continued fraction which admits an expansion into a Taylor series $\Phi_n(q) =\sum_{i=0}^{+\infty} f_i q^i$ around $q=0$, with integral coefficients. Consider the sequences of shifted
Hankel determinants of $\Phi_n$:
$$\De_j^{(\ell)}:=\det(f_{a+b+\ell})_{a,b=0}^{j-1}, \quad \ell,j≥0.$$
By using the notion of Hankel continued fraction introduced by the first author  in \cite{Han16} we determine explicitly the first $n+2$ sequences $\Delta_j^{(0)},\Delta_j^{(1)},\dots,\Delta_j^{(n+1)}$ and show that they satisfy the following properties:
\begin{enumerate}
\item They are periodic and consist of~$-1,0,1$ only.
\item They satisfy a three-term Gale-Robinson recurrence, i.e. they form discrete integrable dynamical systems.
\item They are all completely determined by the first sequence $\Delta_j^{(0)}$.
\end{enumerate}
This article thus validates a conjecture formulated by V.~Ovsienko and the second author in \cite{OP25} and establishes new connections between $q$-deformations of real numbers and  sequences of Catalan or Motzkin numbers.
\end{abstract}

\maketitle

\thispagestyle{empty}

\tableofcontents

%%%%%%%%%%%%%%%%%%%%%%%%
%%%%%%%%%%%%%%%%%%%%%%%%
\section{Introduction}
%%%%%%%%%%%%%%%%%%%%%%%%
%%%%%%%%%%%%%%%%%%%%%%%%

Given a power series $F(q)=\sum_{i=0}^{+\infty}f_iq^i$
or simply a sequence of numbers $F=(f_i)_{i\in\Z_{≥0}}$, by the \emph{Hankel determinants of $F$} we mean the doubly indexed family 
\begin{equation}
\label{HankDet}
\De^{(\ell)}_j(F):=\left|
\begin{matrix}
{f}_\ell & {f}_{\ell+1} & \cdots & {f}_{\ell+j-1} \\[4pt]
{f}_{\ell+1} &{f}_{\ell+2} & \cdots & {f}_{\ell+j} \\[4pt]
\vdots & \vdots && \vdots \\
 {f}_{\ell+j-1} & {f}_{\ell+j} & \cdots & {f}_{\ell+2j-2}
\end{matrix}
\right|
\end{equation}
where $\ell,j\in\Z_{≥0}$, with the convention $\De^{(\ell)}_0(F):=1$. The determinant $\De^{(\ell)}_j(F)$ is thus of size $j\times j$, and we call the number $\ell$ the \emph{shift} of the determinant, since it is clear that
\begin{equation*}
\De^{(\ell)}_j\bigl((f_i)_{i≥0})\bigr)=\De_j^{(0)}\bigl((f_{i+\ell})_{i≥0})\bigr).
\end{equation*}
Similarly, if  $F(q)=\sum_{i=0}^{\infty}f_iq^i$ is a power series, we have
\begin{equation}\label{defFl}
\De^{(\ell)}_j(F)=\De_j^{(0)}(F^{(\ell)})
\end{equation}
where
\begin{equation*}
F^{(\ell)}(q):=
\frac 1{q^\ell}\left(F(q)-\sum_{i=0}^{\ell-1}f_iq^i\right)=\frac 1{q^\ell}\left(\sum_{i=\ell}^{+\infty}f_iq^i\right).
\end{equation*}
When $\ell=0$, we will often use the shorter notation $\De_j(F)$ instead of $\De^{(0)}_j(F)$ and we speak of \emph{ordinary Hankel determinants} of $F$. On the other hand we will write simply $\De^{(\ell)}(F)$ to denote the entire sequence $(\De^{(\ell)}_j(F))_{j≥0}$.

First appeared in 1861 \cite{Hankel61}, Hankel matrices and determinants (and later on,  Hankel and Toeplitz operators) are this kind of objects that arise naturally in various fields of mathematics and often interconnect them: combinatorics \cite{Radoux79,Aigner99,Tamm01,CK11}, orthogonal polynomials \cite{Viennot83,Junod03,DIK11,BP17},  number theory \cite{Wall,APWW98,Bugeaud11}, probability theory \cite{Schmudgen,BDJ06},  complex analysis \cite{Henrici1,Pommerenke66,KLS18}, functional and applied analysis \cite{Peller,Sauer}… (among many other valuable references).  

In this article we would like to present the remarkable properties of  Hankel determinants of a family of power series $\Phi_n$ ($n\in\Z_{≥1}$) that are defined as ``$q$-deformations'', in the sense of S.~Morier-Genoud and V.~Ovsienko (to be specified a bit later), of the so-called \emph{metallic numbers}, or \emph{metallic ratios}, or \emph{metallic means}. Recall that these  numbers are the quadratic irrationals

\begin{equation*}
\phi_n:=\frac{n+\sqrt{n^2+4}}{2} = n+\cfrac{1}{n+\cfrac{1}{n+\cfrac 1{\ddots}}}, \quad n≥1,
\end{equation*} 
with most famous representatives:
\begin{align*}
	\phi_1&=\frac{1+\sqrt{5}}{2}\qquad\text{(golden ratio)},\\
	\phi_2&=1+\sqrt{2}\qquad\text{(silver ratio)},\\
	\phi_3&=\frac{3+\sqrt{13}}{2}\qquad\text{(bronze ratio)}.
\end{align*}
Surprisingly, the properties of their Hankel determinants, together with some particular continued fraction expansions, will make our functions $\Phi_n$ resemble  to the generating series of Catalan or Motzkin numbers, which are ubiquitous in mathematics. This connection was first observed in cases $n=1$ and $n=2$ by V.~Ovsienko and the second author \cite{OP25}. The present work is thus a continuation and generalization of the results therein. 

Let us elaborate. First, we recall that the \emph{Catalan numbers} are defined by the formula\footnote{We avoid the traditional notation $C_n$ because of possible confusion with some other object introduced later.}
\begin{equation}\label{Catalan}
\gamma_n:=\fr{1}{n+1}\binom{2n}{n}=\fr{(2n)!}{n!(n+1)!}, \quad n≥0,
\end{equation}
and that first terms are
\begin{equation*}
1, 1, 2, 5, 14, 42, 132, 429, 1430, 4862, 16796, 58786, 208012, 742900, 2674440,\,\ldots
\end{equation*} 
see sequence A000108 in the On-line Encyclopedia of Integer Sequences (OEIS) \cite{oeis}. All shifted Hankel determinants of the generating series $\Ga(q):=\sum_{n=0}^{+\infty}\ga_n q^n$  are known, and expressed by the following simple formulas (see e.g. \cite{Aigner99,DCV86,Krattenthaler05}):
\begin{align}
&\De_j(\Ga)=1,\,1,\,1,\,1,\,\ldots,
&
\De^{(1)}_j(\Ga)&=1,\,1,\,1,\,1,\,\ldots,
\label{Hank01Cat}\\
&\De^{(2)}_j(\Ga)=1,\,2,\,3,\,4,\,\ldots,
&
\De^{(\ell)}_j(\Ga)&=\prod_{1≤a≤b≤\ell-1}\fr{2j+a+b}{a+b},\quad \ell≥2.\label{Hank2Cat}
\end{align}
Another classical example is that of the \emph{Motzkin numbers}
\begin{equation*}
M_n:=\sum_{k=0}^{\lfloor n/2\rfloor}\binom{n}{2k}\ga_k, \quad n≥0,
\end{equation*}
that form  sequence A001006 in OEIS:
\begin{equation*}
1, 1, 2, 4, 9, 21, 51, 127, 323, 835, 2188, 5798, 15511, 41835, 113634,\,\ldots
\end{equation*} 
In this case, the first four sequences of Hankel determinants of the generating series $M(q):=\sum_{n=0}^{+\infty}M_n q^n$ are given by
\begin{align}
\De_j(M)&=1,\,1,\,1,\,1,\,\ldots,\label{Hank0Motzkin}\\
\De^{(1)}_j(M)&=1,\,1,\,0,-1,-1,\,0,1,\,1,\,0,-1,-1,\,0,\,\ldots,\label{Hank1Motzkin}\\
\De^{(2)}_j(M) &= 1,\,2,\,2,\,3,\,4,\,4,\,5,\,6,\,6,\,7,\,8,\,8,\,\ldots,\label{Hank2Motzkin}\\
\De^{(3)}_j(M) &=1, 4, 3, -6, -16, -10, 15, 36, 21, -28, -64, -36, 45,\,\ldots,\notag
\end{align}
see e.g. \cite{Aigner98,BP17}.
It is particularly interesting to note that the  sequence \eqref{Hank0Motzkin} of ordinary Hankel determinants is identically~$1$ as for Catalan numbers, while the  first shifted sequence \eqref{Hank1Motzkin} \emph{is $3$-antiperiodic (thus $6$-periodic) and consists of $-1,0$, and $1$ only} (see entry A010892 in OEIS). The sequences $\De^{(\ell)}(\Phi_n)$ we are going to consider in the present article  share this special property, as will be explained below. For later comparison, let us mention also that the shifted Hankel sequence $\De^{(1)}_n(M)$ satisfies
the recurrence relation
\begin{equation}\label{SomosMotzkin}
\De_{n+2}\De_n=\De_{n+1}^2-1.
\end{equation}

Let us come to the  presentation of our results. Throughout the paper we will use the following classical notation for continued fractions:
\begin{equation*}
\fr{\al_0}{\be_0}\cfp\fr{\al_1}{\be_1}\cfp\fr{\al_2}{\be_2}\cfp\cfd\cfp\fr{\al_{p-1}}{\be_{p-1}}\cfp\fr{\al_p}{\be_p}:=
\cfrac{\al_0}{\be_0+\cfrac{\al_1}{\be_1+\cfrac{\al_2}{\ddots+\cfrac{\al_{p-1}}{\be_{p-1}+\cfrac{\al_p}{\be_p}}}}}
\end{equation*}
and, to shorten, Gauss' notation
\begin{equation*}
\CF_{j=0}^{p}\fr{\al_j}{\be_j}:=\fr{\al_0}{\be_0}\cfp\fr{\al_1}{\be_1}\cfp\cfd\cfp\fr{\al_p}{\be_p}.
\end{equation*}
Both notation are extended  to infinite continued fractions, and periodic data inside a continued fraction will be indicated between  parentheses and with a star superscript $*$. For instance, we remind the  noticeable expression of the metallic number $\phi_n$ as a $1$-periodic regular continued fraction: 
\begin{equation*}
\phi_n=n+\left(\fr 1{n}\cfp\right)^*=n+\fr{1}{n}\cfp\fr{1}{n}\cfp\fr{1}{n}\cfp\cfd
\end{equation*}
From now on, for any positive integer $n$ we denote by $\Phi_n$ the \emph{$q$-deformation of $\phi_n$} in the sense of S.~Morier-Genoud and V.~Ovsienko \cite{MGO22}. By definition, it is the 2-periodic continued fraction
\begin{equation}\label{defPhin}
\Phi_n(q)=[n]_q+\left(\fr{q^n}{[n]_{q^{-1}}}\cfp\fr{q^{-n}}{[n]_q}\cfp\right)^*
\end{equation}
where $[n]_q$ is the polynomial which stands for the classical Euler-Gauss $q$-deformation of the integer $n$:
\begin{equation*}
[n]_q:=\frac{1-q^n}{1-q}=1+q+q^2+\cdots+q^{n-1}.
\end{equation*}
The functions $\Phi_n$ ($n≥1$) will  be refered to as  \emph{the $q$-deformed metallic numbers}, or simply \emph{the $q$-metallic numbers}. A motivation for definition \eqref{defPhin}, and more generally for the $q$-deformation of real numbers that was discovered by S.~Morier-Genoud and V.~Ovsienko a few years ago, will be given in \cref{sec-qreals}, together with some of the key results of the theory. 

As an example, let us look at the $q$-golden number $\Phi_1(q)$, i.e. the $q$-deformation of the golden ratio $\phi_1=\fr{1+\sqrt{5}}2$, whose definition \eqref{defPhin} can easily be  rewritten as
\begin{equation*}
\Phi_1(q)=1+\left(\fr{q^2}{q}\cfp\fr{1}{1}\cfp\right)^*.
\end{equation*}
In the articles \cite{MGO22} and \cite{OP25} were discovered several striking properties of $\Phi_1$   which led to a comparison with the generating series $\Ga$ and $M$ of the Catalan and Motzkin numbers. Let us indicate three such connections:
\begin{enumerate}
\item $\Phi_1(q)$ has various nice continued fraction expansions, such as
\begin{equation}\label{CfracPhi1}
\Phi_1(q)
=\fr{1}{1}\cfp\left(\fr{-q^2}{1}\cfp\fr{q}{1}\cfp\right)^*=
1+\fr{q^2}{1}\cfp\left(\fr {q}{1}\cfp\fr {q}{1}\cfp\fr {q^3}{1}\cfp\right)^*
\end{equation}   
which belong to the particular class of $C$-fractions (see \cite{OP25}, where more expansions of $\Phi_1(q)$, e.g. as $J$-fractions,  can also be found). Compare with the following well-known Catalan and Motzkin $C$-fraction expansions:
\begin{equation*}
\Ga(q)=\fr{1}{1}\cfp\left(\fr{-q}{1}\cfp\right)^*,
\qquad 
M(q)=\fr{1}{1}\cfp\left(\fr {-q}{1}\cfp\fr {-q}{1}\cfp\fr {-q^2}{1}\cfp\right)^*.
\end{equation*}
\item The $q$-golden number admits the following Taylor series expansion about $q=0$:
\begin{equation} \label{SEPhi1}
\begin{split}
\Phi_1(q)&=
1 + q^2 - q^3 + 2 q^4 - 4 q^5 + 8 q^6 - 17 q^7 + 37 q^8 - 82 q^9\\
&\quad +\, 185 q^{10}- 423 q^{11} + 978 q^{12}-2283q^{13}+ 5373q^{14}-12735q^{15} + \cdots\\
\end{split}
\end{equation} 
see~\cite{MGO22}. The coefficients here are quite close to sequence A004148 of OEIS, whose terms are called ``generalized Catalan numbers''\footnote{Note however that there are many other sequences known under this name.}; the differences are the lack of linear term in the above formula and the alternating signs. Sequence A004148 has many interesting combinatorial interpretations, such as enumeration of peakless Motzkin paths, secondary structures of RNA molecules and diagonal sums of the Narayana triangle, see entry A004148 of \cite{oeis}. 
\item Hankel determinants of $\Phi_1$ satisfy the following properties (see~\cite{OP25}): the first four sequences of Hankel determinants $\De^{(\ell)}_n(\Phi_1)$, for $\ell=0,1,2,3$, are $4$-antiperiodic
 (thus $8$-periodic): 
\begin{equation}\label{HankelGold1}
\De^{(\ell)}_{n+4}(\Phi_1)=-\De^{(\ell)}_n(\Phi_1),\quad
\ell=0,1,2,3,
\end{equation} 
and they consist of $0,1$, and $-1$ only.
First terms are:
\begin{equation}
\label{HankelGold2}
\begin{array}{rclrrrrrrrr}
\De_j(\Phi_1) &=& 1,&\,1,&\,1,&\;\;0,&-1,&-1,&-1,&\,0,&\ldots,\\
\De^{(1)}_j(\Phi_1)&=& 1,&\,0,&-1,&\;1,&-1,&\,0,&\,1,&-1,&\ldots,\\
\De^{(2)}_j(\Phi_1)&=& 1,&\,1,&\,1,&\,0,&-1,&-1,&-1,&\,0,&\ldots,\\
\De^{(3)}_j(\Phi_1)&=& 1,&-1,&\,0,&\,0,&-1,&\,1,&\,0,&\,0,&\ldots.
\end{array}
\end{equation}
These sequences compare with the Motzkin first shifted Hankel sequence \eqref{Hank1Motzkin}. To some extent, a sequence identically equal to $1$ such as \eqref{Hank01Cat} or \eqref{Hank0Motzkin} can be thought of as a particular case of a periodic sequence with values in the set $\{-1,0,1\}$, thus we  may interpret \eqref{HankelGold2} as another  connection between the $q$-golden number $\Phi_1$ and the Catalan and Motzkin series $\Ga$ and $M$.  Also, one has
\begin{equation*}
\De^{(4)}_n(\Phi_1)=1,\,2,\,0,-2,-3,-4,\,0,\,4,\,5,\,6,\,0,-6,-7,-8,\,0,\,8,\ldots,
\end{equation*}
which reminds  \eqref{Hank2Cat} and \eqref{Hank2Motzkin}. Lastly, the first three Hankel determinants sequences $\De_n(\Phi_1)$, $\De_{n}^{(1)}(\Phi_1)$ and 
$\De_n^{(2)}(\Phi_1)$ all satisfy  the Somos-4 relation 
\begin{equation}
\label{SomosPhi1}
\De_{n+4}\De_n = \De_{n+3}\De_{n+1} - \De_{n+2}^2,
\end{equation} 
which is close to \eqref{SomosMotzkin}. Recall that, for $k≥4$, a \emph{Somos-$k$ sequence} is a solution $(S_j)$ of a quadratic recurrence of the form
\begin{equation}\label{Somos}
S_j S_{j+k}= \sum_{i=1}^{\lfloor{k/2}\rfloor}\alpha_i S_{j+i}S_{j+k-i}
\end{equation} 
for arbitrary integers~$\alpha_i$ and initial data $S_0,\ldots,S_{k-1}$; see more on that subject after the statement of \cref{MainGR} below.
\end{enumerate} 

Let us say a word about the proofs of \eqref{HankelGold1}, \eqref{HankelGold2} and \eqref{SomosPhi1}.
Usual techniques to compute Hankel determinants of classical sequences (such as Catalan or Motzkin) rely mostly on continued fraction expansions, orthogonal polynomials theory and/or combinatorial models; see e.g.~\cite{Heilermann1846,Krattenthaler99,Krattenthaler05,Aigner01,Barry10,Elouafi15,BP17}…
In particular, one may think that continued fraction expansions like \eqref{CfracPhi1} can help to compute the Hankel sequences, since there exists a well-known Hankel determinant formula for \emph{regular} $C$-fractions (also known as \emph{Stieltjes continued fractions}) and for  \emph{regular} $J$-fractions (also known as \emph{Jacobi continued fractions}); see~\cite{Krattenthaler05}~§5.4. Unfortunately, it is not possible to expand $\Phi_1$ (nor any $\Phi_n$, actually) as such special continued fractions.
But, in \cite{OP25} as well as in the present work,  Hankel determinant formulas are obtained by using the notion of \emph{Hankel continued fractions} introduced by the first author  in \cite{Han16}. These form a  family of continued fractions which  includes both  classical {regular $J$-fractions} and {regular $C$-fractions} as subcases, and still conveniently comes with an explicit Hankel determinant formula; see~\cref{secH-frac} for  details.	

For instance, it was proved in \cite{OP25} (Lemmas~3.6 and 3.9) that the $H$-fraction of the $q$-golden number $\Phi_1$ is the $3$-periodic continued fraction
\begin{equation}\label{HFGold}
\Phi_1(q)=\frac{1}{1}\cfp\left(\fr{-q^2}{1+q}\cfp \fr{q^3}{1+q-q^2}\cfp \fr{q^3}{1+q}\cfp\right)^*
\end{equation}
while the $H$-fraction expansion of the $q$-silver number $\Phi_2$ is $8$-periodic and reads:
\begin{equation}\label{HFSilver}
\begin{split}
\Phi_2(q)=\frac{1}{1-q}\cfp
&\left(\fr{q^2}{1+q}\cfp \fr{q^2}{1}\cfp \fr{q^2}{1}
\cfp\fr{-q^{3}}{1+2q^2}\right.\cfp\\
&\qquad\left.\fr{q^{5}}{1+2q^2-q^3}\cfp\fr{q^{5}}{1+2q^2}\cfp\fr{-q^{3}}{1}
\cfp\fr{q^2}{1}\cfp\right)^*.
\end{split}
\end{equation} 
It turns out that these two formulas (and their analogues for the shifts $\Phi_1^{(\ell)}$ and $\Phi_2^{(\ell)}$), together with the Hankel determinant formula for $H$-fractions, yielded all the nice properties of the determinants $\De_j^{(\ell)}(\Phi_1)$ and $\De_j^{(\ell)}(\Phi_2)$ that were highlighted in \cite{OP25}. 

In the present paper we will follow the same path and begin our work by proving the generalization of \eqref{HFSilver} to any integer $n≥2$, namely:

\begin{mainthm}[{$H$-fraction expansion for $\Phi_n$}] \label{MainHF}
Suppose $n≥2$ and define the polynomial
\begin{align}
\langle n\rangle_q
&:=q[n]_q+(1+q^n)(1-q)\label{defnq}\\
&=
\begin{cases}
1+q^2+q^3+\cdots+q^{n-1}+2q^n-q^{n+1}, &\text{ if } n≥ 3,\\
1+2q^2-q^3,&\text{ if } n=2.
\end{cases}
\notag
\end{align}
The $H$-fraction expansion of the $q$-deformed metallic number $\Phi_n(q)$ 
	defined in \cref{defPhin}
	is $(6n-4)$-periodic with offset $1$ and has the following form:
\begin{equation}\label{Hfrac-Phin}
\Phi_n(q)=\frac{1}{1-q}\cfp\left(U_n(q)\cfp V_n(q)\cfp W_n(q)\cfp\right)^*,
\end{equation}
where
\begin{align*}
U_n(q)&=\CF_{i=0}^{n-3}\left(\fr{q^{n-i}}{[n-i]_q}\cfp \fr{q^n}{[i+2]_q-q}\cfp \fr{q^{i+2}}{1-q}\right)\cfp\fr{q^2}{[2]_q}\cfp \fr{q^n}{[n]_q-q}\cfp \fr{q^n}{1},
\\
V_n(q)&=\fr{-q^{n+1}}{\cron+q^{n+1}}\cfp\fr{q^{2n+1}}{\cron}\cfp
\fr{q^{2n+1}}{\cron+q^{n+1}}\cfp\fr{-q^{n+1}}{1},
\\
W_n(q)&=\CF_{i=0}^{n-3}\left(\fr{q^{n-i}}{[n-i]_q-q}\cfp \fr{q^n}{[i+2]_q}\cfp \fr{q^{i+2}}{1-q}\right)
\cfp\fr{q^2}{1}.
\end{align*}
\end{mainthm}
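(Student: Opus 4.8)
The plan is to verify that the right-hand side of \eqref{Hfrac-Phin} is indeed the (unique) $H$-fraction of $\Phi_n$. By the theory of \cite{Han16} (recalled in \cref{secH-frac}) this splits into two independent checks: first, that the displayed continued fraction equals $\Phi_n$ as a formal power series; and second, that each of its partial quotients meets the normalization conditions defining an $H$-fraction. Once both hold, uniqueness of the output of the algorithm \alg forces the displayed continued fraction to \emph{be} the $H$-fraction of $\Phi_n$, and hence it is $(6n-4)$-periodic with offset~$1$ exactly as written.

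For the first check I would start from the defining $2$-periodic expansion \eqref{defPhin}. Writing $\Phi=\Phi_n$ and using the elementary identity $[n]_{q^{-1}}=q^{1-n}[n]_q$, the self-similarity of \eqref{defPhin} collapses to the functional equation $\Phi=[n]_q+\fr{q^{2n}\Phi}{q[n]_q\,\Phi+1}$, which after clearing denominators yields the quadratic
\begin{equation*}
q[n]_q\,\Phi^2+\bigl(1-q[n]_q^2-q^{2n}\bigr)\Phi-[n]_q=0.
\end{equation*}
Note that $q[n]_q=\cron-(1+q^n)(1-q)$, which already explains the appearance of $\cron$ in the block $V_n$. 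On the other side, setting $T:=(U_n\cfp V_n\cfp W_n\cfp)^*$ for the periodic tail, \eqref{Hfrac-Phin} reads $\Phi=\cfrac{1}{(1-q)+T}$, i.e. $T=\fr1\Phi-(1-q)$, a series of valuation~$n$. Periodicity of the tail means precisely that $T$ is a fixed point of the Möbius transformation $\mathcal M_n$ attached to one full period $U_n\cfp V_n\cfp W_n$, equivalently a root of the quadratic read off from the $2\times2$ matrix $M_n$ representing $\mathcal M_n$. The first check thus reduces to showing that, under the substitution $T=\fr1\Phi-(1-q)$, the fixed-point quadratic for $T$ is equivalent to the one above for $\Phi$, the correct root being singled out by matching valuations (equivalently, by the $q$-adic convergence that the normalization guarantees).

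The computational heart, and the step I expect to be the main obstacle, is the evaluation of $M_n$ as the ordered product of the $6n-4$ elementary matrices $\left(\begin{smallmatrix}0&\al_i\\1&\be_i\end{smallmatrix}\right)$ attached to the partial quotients, since its entries depend on $n$ through the telescoping products $\CF_{i=0}^{n-3}$ inside $U_n$ and $W_n$. I would isolate this difficulty in an auxiliary lemma giving, by induction on the upper index, a closed form for the transfer matrix of the generic inner block $\fr{q^{n-i}}{[n-i]_q}\cfp\fr{q^n}{[i+2]_q-q}\cfp\fr{q^{i+2}}{1-q}$ and of its $W_n$-analogue, in which the $-q$ correction sits on the first denominator rather than on the second. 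Once these two products are in closed form, composing them with the four fractions of $V_n$ and the three trailing fractions of $U_n$ produces $M_n$ explicitly, and the fixed-point quadratic can be matched against the one for $\Phi_n$ by a single polynomial identity in $\Z[q]$. (Equivalently, this amounts to running \alg on $\Phi_n$ and proving that the remainder returns to $T$ after exactly one period.)

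The second check is a routine inspection of the explicit data: every partial numerator in \eqref{Hfrac-Phin} is a signed monomial $\pm q^{e}$ and every partial denominator has constant term~$1$ (namely $[k]_q$, $[k]_q-q$, $\cron$, $\cron+q^{n+1}$, $1-q$, or $1$), so the valuation and normalization hypotheses of an $H$-fraction hold, together with the leading-valuation inequalities that ensure $q$-adic convergence. Finally I would dispose of the degenerate case $n=2$ separately, where the products $\CF_{i=0}^{n-3}$ are empty, by checking that \eqref{Hfrac-Phin} specializes to the silver expansion \eqref{HFSilver} of \cite{OP25}; this both anchors the induction in the auxiliary lemma and furnishes a consistency check on the general formula.
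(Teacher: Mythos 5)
Your strategy is genuinely different from the paper's and is sound in outline. The paper proves \cref{MainHF} by running the algorithm \alg \emph{forward}: starting from the quadratic equation \eqref{FE-Phin}, it introduces two explicit families of sextuplets $(A_j,B_j,C_j,k_j,a_j,D_j)$ (\cref{defi-A_1,defi-A_2}) and verifies in \cref{lemma:1,lemma:2} that each application of \alg carries one sextuplet to the next, until the triple $(A,B,C)$ returns to an earlier value after $6n-4$ steps, so that periodicity and the explicit partial quotients drop out simultaneously. You instead verify the stated answer \emph{backwards}: check the $H$-fraction normalization so that uniqueness of $H$-fraction expansions applies, and prove that the displayed periodic continued fraction sums to $\Phi_n$ by showing that the tail $T=1/\Phi_n-(1-q)$ is the correct fixed point of the M\"obius transformation of one period. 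Your quadratic for $\Phi_n$ is correct (it is \eqref{FE-Phin} after dividing by $[n]_q$), the valuation count $\mathrm{val}(T)=n$ is right, and you correctly flag the root-selection issue. The trade-off: your route ends in a single global polynomial identity, but that identity requires a closed form for the ordered product of $6n-4$ transfer matrices — exactly the inductive lemma you defer — whereas the paper replaces this one global computation by $6n-4$ local verifications, each a short application of Steps 1--3 of \alg to explicitly given polynomials.

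Two caveats before this counts as a proof. First, the entire content of the theorem lives in the closed form of the transfer matrices of the inner blocks, which you do not state; without it neither the fixed-point quadratic nor its matching against \eqref{FE-Phin} can be checked, and nothing guarantees a priori that the product telescopes to something tractable (though the symmetries recorded in \cref{section-symmetries} suggest it does). Second, your ``routine inspection'' of the normalization is more than checking constant terms: you must recover the sequence $(k_i)$ recursively from the numerator exponents via $k_i=e_{i+1}-k_{i-1}-2$, verify that every $k_i$ is a nonnegative integer, and verify $\deg(D_i)\le k_i+1$ for each denominator (e.g.\ $D=[n]_q$ of degree $n-1$ must sit at a position with $k=n-2$). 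This is precisely the data tabulated in \eqref{value-kj} and is needed later for \eqref{HankelHF} anyway, so it is doable — but it is a genuine list of inequalities, not a one-line observation.
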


When $n=2$, we see that \eqref{Hfrac-Phin} coincides as expected with \eqref{HFSilver}, provided we replace all $\cK$ operators by $0$. On the contrary, when $n=1$ the $H$-fraction expansion \eqref{HFGold} of the $q$-golden number $\Phi_1$ is $3$-periodic and does not fit into the above general formula. 

Note also that an analogue of \cref{MainHF} exists for all shifts $\Phi_n^{(\ell)}$ with $\ell=1,\ldots,n+1$ (see definition \eqref{defFl}), but will not be stated here for simplicity; see \cref{HFPhishifts}.

We continue the exposition of our main results and deal now with the Hankel determinants of $\Phi_n$. First of all, as for the $q$-golden number \eqref{SEPhi1}, all $q$-metallic numbers $\Phi_n$ expand  as Taylor series about zero: actually this is a general property of $q$-deformations of  \emph{positive} real numbers, see \cite{MGO22}. In particular, all their (ordinary or shifted) Hankel determinant sequences are well-defined.
Thus, from now on we can fix a positive integer $n$ and set 
$$\De_j^{(\ell)}:=\De_j^{(\ell)}(\Phi_n)$$
for short.

We begin with the generalization of \eqref{HankelGold1}.  

\begin{mainthm}[{Values and Periodicity}]\label{MainValPer}
Sequences  $\Delta^{(\ell)}$ with $\ell\in\{0,1,\ldots,n+1\}$ consist of~$-1,0,1$ only. Moreover they are $2n(n+1)$-periodic when $n$ is even and $2n(n+1)$-antiperiodic (hence $4n(n+1)$-periodic) when $n$ is odd:
\begin{equation*}
\De_{j+2n(n+1)}^{(\ell)}=(-1)^n\De_{j}^{(\ell)}\quad\text{for all } j≥0.
\end{equation*}
\end{mainthm}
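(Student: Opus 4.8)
The plan is to read \cref{MainValPer} off the explicit $H$-fraction expansions, following the strategy used in \cite{OP25} for $n=1,2$. By \cref{MainHF}, together with its shifted analogue \cref{HFPhishifts}, each $\Phi_n^{(\ell)}$ with $\ell\in\{0,1,\dots,n+1\}$ has a fully explicit $H$-fraction, periodic after an initial term,
$$\Phi_n^{(\ell)}(q)=\CF_{i\ge 0}\fr{v_i\,q^{\kappa_i}}{h_i(q)}$$
in which every numerator leading coefficient is $v_i\in\{-1,+1\}$ and every denominator has constant term $h_i(0)=1$. The whole argument then amounts to inserting these data into the Hankel determinant formula for $H$-fractions recalled in \cref{secH-frac}.

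First I would prove the statement on values. That formula evaluates $\De_j^{(\ell)}$ as $0$ off the \emph{support} — the increasing set of indices at which a non-zero determinant is predicted, a set governed entirely by the degrees $\deg h_i$ — and, on the support, as a signed product of the numerator leading coefficients $v_i$. The decisive point is that, because each $h_i(0)=1$, only these leading coefficients enter the \emph{value}: the remaining coefficients of the $h_i$, in particular the leading $2q^n$ of $\cron+q^{n+1}$ appearing in $V_n$, affect where the zeros fall but never contribute a factor to a non-vanishing determinant. Since every $v_i=\pm1$, each non-zero $\De_j^{(\ell)}$ is a product of $\pm1$'s and so lies in $\{-1,+1\}$, while all remaining determinants vanish. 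This is exactly why no factor of $2$ survives and the values are confined to $\{-1,0,1\}$.

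Next I would pin down the period. The $H$-fraction formula shows that one full period of the expansion advances the support index by $L:=\tfrac12\sum_{\text{period}}\kappa_i$, so it suffices to sum the numerator exponents over one block $U_nV_nW_n$. A direct count gives $\sum\kappa_i=2(n+1)(n-1)+(6n+4)+\bigl(2(n+1)(n-2)+2\bigr)=4n(n+1)$, whence $L=2n(n+1)$. Since the periodic block repeats verbatim, both the zero/non-zero pattern and the sequence of factors $v_i$ recur after each block, so there is a fixed sign $\eps\in\{-1,+1\}$ with $\De_{j+2n(n+1)}^{(\ell)}=\eps\,\De_j^{(\ell)}$ for all $j\ge0$; the offset-$1$ head only fixes where the first block starts and is harmless because $\De_0^{(\ell)}=1$. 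The same count applies to every shift $\ell$, as the blocks supplied by \cref{HFPhishifts} share this exponent sum.

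It remains to show $\eps=(-1)^n$, which is the delicate part. The sign accumulated over one period is \emph{not} the naive product $\prod v_i$: the only negative numerators are the two in $V_n$, so $\prod v_i=+1$ for every $n\ge2$, whereas $\eps$ must alternate with the parity of $n$. The missing sign is produced jointly by the universal gap signs $(-1)^{\binom{\deg h_i}{2}}$ attached to the high-degree denominators of $V_n$ (those of degree $\sim n$ built from $\cron$) and by the position-dependent exponents with which the $v_i$ occur in the product formula (as in the Jacobi-fraction law $\De_j=\prod_i\lambda_i^{\,j-i}$, where the exponent of each factor drifts with $j$). The main obstacle is thus a parity bookkeeping: one must show that, over one period, the combined contribution of these degree signs and shifting exponents collapses to $(-1)^n$. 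I would first calibrate the exact shape of the formula on the known cases $n=1$ and $n=2$, then carry out the general parity computation — this single identity is the crux on which \cref{MainValPer} turns.
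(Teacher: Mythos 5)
Your overall strategy is the one the paper follows: read the values off the Hankel determinant formula \eqref{HankelHF} for $H$-fractions (all $v_i=\pm1$, so every nonzero determinant is a signed product of $\pm1$'s and the zeros are governed by the support set $\cS$), and read the period length off the advance of the support over one periodic block. Your exponent count $\sum\kappa_i=4n(n+1)$, hence a support shift of $2n(n+1)$ per block, agrees with \cref{sp+6n-4}, and your observation that $\prod v_i=+1$ over a period, so that the sign $(-1)^n$ must come from elsewhere, is exactly the right diagnosis.

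The gap is that the identity $\eps=(-1)^n$ --- which you yourself call the crux --- is only announced, never established. In the paper (\cref{Desp+6n-4}) this is a genuine computation with three ingredients, none of which your sketch supplies. First, one must check that the ratio $\De_{s_{p+6n-4}}/\De_{s_p}$ is independent of $p$ at all: this uses not only the periodicity of $(v_i)$ and $(k_i)$ but also the parity fact $s_{p+6n-4}\equiv s_p\pmod 2$ (true because $2n(n+1)$ is even), without which the drifting exponents in $v_0^{s_p}v_1^{s_p-s_1}\cdots$ cannot be realigned between $p$ and $p+6n-4$. Second, the ``gap sign'' contribution is $(-1)^{\eps_{6n-4}}$ with $\eps_{6n-4}=\sum_{\mathrm{period}}k_i(k_i+1)/2=\fr{(2n-1)(n^2-n+3)}{3}$, which must be shown to be odd for every $n$; note that it receives contributions from \emph{all} three blocks, not only from the high-degree denominators of $V_n$ as you suggest (e.g.\ $k_1=n-2$ already sits in $U_n$). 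Third, the position-dependent factor $v_1^{s_1}\cdots v_{6n-4}^{s_{6n-4}}=(-1)^{s_1+\cdots+s_{6n-4}-s_{3n-2}-s_{3n+1}}$ requires the explicit values and symmetries of the $s_i$ (\cref{value-sj}) to evaluate the exponent to $(n+1)(6n^2-5n-3)\equiv n+1\pmod 2$; only the product of the last two contributions gives $(-1)^n$. The case $p=0$ also needs separate treatment, since \eqref{Hankel-prod} does not apply to $\De_0$. Finally, the paper does \emph{not} run this argument for each shift: it proves only $\ell=0$ and obtains $1\le\ell\le n+1$ from the contiguity relations of \cref{MainContiguity}. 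Your plan to treat every shift directly via \cref{HFPhishifts} would force you to redo the parity bookkeeping of the third ingredient for each truncated head, since the values $s_i$ and the positions of the exceptional $v_i=+1$ change with $\ell$; this is feasible but is substantially more work than your sketch acknowledges.
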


This result reflects the periodicity of the $H$-fraction expansion \eqref{Hfrac-Phin} but also reveals some symmetric patterns inside it; see~\cref{section-symmetries}. 
Explicit formulas for the Hankel determinants $\De_j^{(\ell)}$ will be given in \cref{thm-Hankel0} for the case $\ell=0$, together with a few examples for small values of $n$; cases $\ell=1,\ldots,n+1$ will then follow from \cref{MainContiguity} below.

Now we describe the dynamics of the Hankel sequences:

\begin{mainthm}[{Gale-Robinson recurrence}] \label{MainGR}
Sequences  $\Delta^{(\ell)}$ with $\ell\in\{0,1,\ldots,n+1\}$ all satisfy the same following three-term Gale-Robinson recurrence:
\begin{equation*}
\De^{(\ell)}_{j+2n+2}\,\De^{(\ell)}_j = \De^{(\ell)}_{j+2n+1}\,\De^{(\ell)}_{j+1} - \bigl(\De^{(\ell)}_{j+n+1}\bigr)^2
\quad\text{for all } j≥0.
\end{equation*}
\end{mainthm}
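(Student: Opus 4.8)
The recurrence of \cref{MainGR} relates Hankel determinants of a \emph{single} shift $\ell$ at different sizes $j$, so it is a genuine nonlinear constraint rather than a formal determinant identity, and my plan is to reduce it to a finite verification on the explicit periodic pattern furnished by \cref{thm-Hankel0} and \cref{MainContiguity}. Conceptually such a bilinear recurrence is to be expected: $\Phi_n$ is a quadratic irrational over $\Q(q)$ whose $H$-fraction is eventually periodic by \cref{MainHF}, and periodic continued fractions of quadratic irrationals are classically tied to Somos/Gale--Robinson dynamics; the work is to pin down the precise span and the signs. As a preliminary, applying Desnanot--Jacobi (Dodgson condensation) to the Hankel matrix $(f_{\ell+a+b})_{a,b=0}^{j-1}$, by deleting its first and last rows and columns, gives the universal cross-shift relation
\begin{equation}\label{DJcond}
\De_j^{(\ell)}\,\De_{j-2}^{(\ell+2)}=\De_{j-1}^{(\ell)}\,\De_{j-1}^{(\ell+2)}-\bigl(\De_{j-1}^{(\ell+1)}\bigr)^2 ,
\end{equation}
valid for all $\ell\ge 0$ and $j\ge 2$. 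This is the mechanism behind \cref{MainContiguity}, since it determines $\De^{(\ell+1)}$ up to sign from $\De^{(\ell)}$ and $\De^{(\ell+2)}$, whence by induction every shift is fixed by $\De^{(0)}$; I shall reuse it below as a consistency check. But, as the case $n=1$ already shows, \eqref{DJcond} does not by itself yield the recurrence of \cref{MainGR}, whose span $2n+2$ exceeds that of \eqref{DJcond}.

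The next step is reduction to a single period. Fix $\ell\in\{0,1,\dots,n+1\}$. By \cref{MainValPer} the sequence $\De^{(\ell)}$ is $\{-1,0,1\}$-valued and satisfies $\De_{j+P}^{(\ell)}=(-1)^n\De_j^{(\ell)}$ with $P=2n(n+1)$. Each term of the proposed identity is a product of \emph{two} determinants (or a square), so the factor $(-1)^n$ occurs an even number of times and cancels; hence the recurrence is invariant under $j\mapsto j+P$. It therefore suffices to verify it for $0\le j<P$, the five indices $j,\,j+1,\,j+n+1,\,j+2n+1,\,j+2n+2$ being reduced modulo $P$ by \cref{MainValPer}.

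On that finite range I substitute the closed form of \cref{thm-Hankel0} (for $\ell=0$) and of \cref{MainContiguity} (for $1\le\ell\le n+1$), which specify exactly the positions of the zeros of $\De^{(\ell)}$ and the sign $\pm1$ elsewhere. The verification then splits on the middle term: where $\De_{j+n+1}^{(\ell)}=0$ the recurrence reduces to $\De_{j+2n+2}^{(\ell)}\De_j^{(\ell)}=\De_{j+2n+1}^{(\ell)}\De_{j+1}^{(\ell)}$, an equality of two signs read off from the pattern; where $\De_{j+n+1}^{(\ell)}=\pm1$ one checks that the three surviving unit products satisfy $(\pm1)(\pm1)=(\pm1)(\pm1)-1$, which forces a local sign configuration that the formula indeed realizes. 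For $n=1$ and $\ell\in\{0,1,2\}$ this reproduces the Somos--4 relation \eqref{SomosPhi1} on the patterns of \cref{MainValPer}, and the general case is its structured analogue; at each step \eqref{DJcond} furnishes an independent cross-check.

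The main obstacle is to carry out this sign bookkeeping \emph{uniformly in $n$}: both the period $P=2n(n+1)$ and the internal arrangement of zeros and signs grow with $n$, so the verification cannot be done case by case. One must extract from \cref{thm-Hankel0} a uniform description of where the zeros of $\De^{(\ell)}$ fall relative to the arithmetic progression of the five Gale--Robinson indices, together with how the nonzero signs correlate across a gap of $2n+2$; the bilinear relation then follows from a single global argument. Controlling this correlation across the transition between the even regime ($n$ periodic) and the odd regime ($n$ antiperiodic) is, I expect, the crux of the proof.
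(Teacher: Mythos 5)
Your skeleton coincides with the paper's: pass between shifts via \cref{MainContiguity} (the paper observes that the contiguity relation makes the bilinear expression $\Ga_j^{(\ell)} := \De_j^{(\ell)}\De_{j+2n+2}^{(\ell)} - \De_{j+1}^{(\ell)}\De_{j+2n+1}^{(\ell)} + \bigl(\De_{j+n+1}^{(\ell)}\bigr)^2$ reduce to the case $\ell=0$, so the check is done once rather than for each $\ell$ as you propose), and use the $2n(n+1)$-(anti)periodicity of \cref{MainValPer} to restrict to finitely many $j$. The difficulty is that everything after this reduction \emph{is} the proof, and you have not supplied it: you announce the plan (``substitute the closed form'', ``split on the middle term'') and then, in your final paragraph, concede that you do not know how to carry out the sign bookkeeping uniformly in $n$ --- which is exactly the content of the theorem. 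The paper does this in \cref{sec-GR} by (i) turning Lemma~\ref{value-sj} into the congruence description \eqref{caracS} of the index set $\cS\cap\{0,\ldots,2n(n+2)+1\}$ of nonzero determinants (conditions mod $n$ and mod $n+1$); (ii) splitting into four cases according to which of $\De_j,\De_{j+1}$ vanish; (iii) in Cases 1--3, listing all parametrized families of admissible $j$, deciding for each which of the five indices lie in $\cS$, and evaluating $\Ga_j$ from the explicit signs of \cref{thm-Hankel0}; and (iv) in Case~4, proving by a congruence argument that $\De_{j+n+1}$ must also vanish. A further point you gloss over: for $j$ near the end of the period the indices $j+n+1,\ldots,j+2n+2$ overflow it, and ``reducing modulo $P$'' introduces the antiperiodic sign $(-1)^n$ \emph{asymmetrically} in the three terms; the paper tracks this explicitly via Lemma~\ref{sp+6n-4} and \eqref{per-De0}, and it is not automatic.

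Two smaller remarks. Your Desnanot--Jacobi identity is a correct determinant identity, but it is not ``the mechanism behind \cref{MainContiguity}'': the paper derives the contiguity relation, including its exact sign, from the $H$-fraction structure (\cref{HFPhishifts} together with Lemma~2.2 of \cite{Han16}), and Dodgson condensation plays no role anywhere in the argument; as you note yourself, it cannot produce a recurrence of span $2n+2$. And your case split on the middle term $\De_{j+n+1}^{(\ell)}$ is organizationally different from the paper's split on $(\De_j,\De_{j+1})$, but either way the work is the exhaustive enumeration against \eqref{caracS}, which is missing. As it stands the proposal is a correct strategy statement with the central verification left open.
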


Recall that a \emph{three-term Gale-Robinson sequence} is a sequence $(S_j)$ defined by a recurrence of the form
\begin{equation}\label{GR3seq}
S_j S_{j+k}=\alpha S_{j+a} S_{j+k-a}+\beta S_{j+b} S_{j+k-b},
\end{equation}
where $k≥4$, $1≤a<b≤k/2$ and $\alpha,\beta$ are some integer constants. Similarly, a \emph{four-term Gale-Robinson sequence} is defined by allowing a third term $\ga S_{j+c} S_{j+k-c}$ in the right-hand side of \eqref{GR3seq}; see  \cite{Gale91}. One can see at once that Somos-4 sequences and Somos-6 sequences (definition \eqref{Somos}) are special instances of three-term Gale-Robinson sequences and four-term Gale-Robinson sequences, respectively.  Two such examples are thus provided by the Hankel determinants $\De^{(\ell)}(\Phi_n)$ ($\ell=0,1,\ldots,n+1$), for $n=1$ (remind \eqref{SomosPhi1}) and $n=2$ (see \cite{OP25} §1.4).

While Somos sequences arose from elliptic function theory and can be studied from this point of view (see e.g. \cite{Hone07,HS08}), some of their properties are best understood with S.~Fomin \& A.~Zelevinsky's theory of cluster algebras, and in particular the \emph{Laurent phenomenon} \cite{FZ02} which implies that all (three or four-term) Gale-Robinson sequences with positive integer coefficients $\al,\be,\ga$ and initial data $S_0=S_1=\cdots=S_{k-1}=1$ have the property of \emph{integrality}, i.e. all terms of such a sequence are integers. This approach gave a unifying proof of the integrality of Somos-$k$ sequences for $k=4,5,6,7$, which was already known to several authors.

Another main property of Somos-Gale-Robinson sequences is that they exhibit solutions to discrete dynamical systems which are integrable in the sense of Liouville-Arnold \cite{FH14,HLK20}. In this way,  Hankel determinants of $q$-deformed metallic numbers provide examples of \emph{periodic} $\{-1,0,1\}$-solutions of the corresponding discrete integrable systems; see the detailed example of $\Phi_1(q)$ in \cite{OP25}, §1.4.
Lastly, let us add that many examples of Somos sequences are produced by Hankel determinants, see \cite{CHX15,Hone21} and the references therein.

Let us come back to the presentation of our results. As we have seen, \cref{MainGR} points out special relations existing between consecutive Hankel determinants inside the same $\ell$-shifted sequence. Incidentally,  we have also discovered an explicit formula connecting the members of two consecutive shifted sequences, which implies in particular that all shifted sequences are completely determined by the ordinary Hankel determinant sequence $\De^{(0)}=\De(\Phi_n)$:

\begin{mainthm}[{Contiguity relations}]\label{MainContiguity}
Pairs of consecutive shifted sequences $(\Delta^{(\ell+1)},\Delta^{(\ell)})$ with $\ell\in\{0,1,\ldots,n\}$ are interconnected by the following formula:
\begin{equation}\label{contiguity}
\Delta^{(\ell+1)}_{j}=(-1)^{j+\frac{n(n+2\ell-1)}{2}}\Delta^{(\ell)}_{j+n+1}
\quad\text{for all } j≥0.
\end{equation}
\end{mainthm}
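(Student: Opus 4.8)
The plan is to read off both sides of the claimed identity from the Hankel continued fraction expansions of the shifted series $\Phi_n^{(\ell)}$ and then to match the resulting sequences term by term. Recall that Han's theory (\cref{secH-frac}, \cite{Han16}) attaches to any $H$-fraction $\CF_{i\ge 0}\frac{v_i q^{k_i}}{D_i}$ (with $D_i(0)=1$) the complete list of indices $j$ at which $\De_j\ne 0$, together with the value of each such determinant, every nonzero value being a signed product of the leading coefficients $v_i$. In our situation all these leading coefficients equal $\pm1$, so each $\De^{(\ell)}_j$ lies in $\{-1,0,1\}$ and, crucially, both the position of the zeros and the signs of the nonzero entries are completely governed by the exponents $k_i$ and the signs of the $v_i$ in the $H$-fraction of $\Phi_n^{(\ell)}$. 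Thus the whole sequence $\De^{(\ell)}=(\De^{(\ell)}_j)_{j\ge0}$ is encoded by the combinatorial data of that expansion.

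First I would invoke the shifted analogue of \cref{MainHF}, namely \cref{HFPhishifts}, which makes the $H$-fraction of $\Phi_n^{(\ell)}$ explicit for every $\ell\in\{0,\dots,n+1\}$. Feeding each of these into Han's formula yields the full $\{-1,0,1\}$-valued sequence $\De^{(\ell)}$. The structural heart of the proof is the observation that, although $\Phi_n^{(\ell)}$ and $\Phi_n^{(\ell+1)}$ are genuinely different series, the position-and-sign data extracted from their expansions are translates of one another: the set of indices carrying a nonzero determinant for the shift $\ell+1$ is that for $\ell$ shifted down by $n+1$, and the nonzero signs agree after the same translation. Concretely, one compares the exponent sequences $(k_i)$ and the leading signs $(v_i)$ read from \cref{HFPhishifts} for two consecutive shifts and checks that they coincide once an initial segment of total Hankel-weight $n+1$ has been absorbed on the $\ell$-side. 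This is exactly what produces the size change $j\mapsto j+n+1$ and the combined-index phenomenon $\De^{(\ell)}_j\sim\De^{(0)}_{j+\ell(n+1)}$ underlying the statement.

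Once this alignment is in place, the identity $\De^{(\ell+1)}_j=\pm\,\De^{(\ell)}_{j+n+1}$ holds immediately at the level of absolute values, and the entire remaining difficulty is the determination of the sign. Here I would track two contributions separately. The factor $(-1)^j$ should come from the $j$-dependent sign rule carried by Han's determinant formula along the periodic core of the expansion, while the factor constant in $j$ is the accumulated product of the leading signs $v_i$ over the absorbed initial block. A direct summation of these signs, using the explicit exponents of \cref{HFPhishifts}, should collapse to the closed form $(-1)^{n(n+2\ell-1)/2}$. It is convenient to factor this as $(-1)^{n(n+2\ell-1)/2}=(-1)^{n(n-1)/2}(-1)^{n\ell}$: the part $(-1)^{n(n-1)/2}$ records the sign content of the block for $\ell=0$, and the factor $(-1)^{n\ell}$ records that this content is multiplied by $(-1)^n$ at each increment of $\ell$. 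This last point serves as a useful consistency check, since it matches the global $(-1)^n$ (anti)periodicity already isolated in \cref{MainValPer}.

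The main obstacle is thus twofold and entirely contained in the last two steps: first, pinning down the exact initial block relating consecutive shifted $H$-fractions, so that the absorbed exponents account for precisely $n+1$ and no more, which forces a careful uniform reading of \cref{HFPhishifts} in $\ell$; and second, the sign computation, i.e. verifying that the accumulated leading-coefficient signs reduce to the compact exponent $n(n+2\ell-1)/2$. Neither step requires any information about the determinants beyond their membership in $\{-1,0,1\}$, so no new analytic input is needed: the difficulty is purely the bookkeeping of positions and signs, which I would organize by induction on $\ell$ starting from the base case $\ell=0$ furnished by \cref{MainHF} and the explicit values of \cref{thm-Hankel0}.
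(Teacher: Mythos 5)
Your plan is sound and would yield the theorem, but it follows a genuinely different route from the paper. The paper does not compare the two Hankel sequences by computing each one explicitly from formula \eqref{HankelHF}. Instead it first packages \cref{HFPhishifts} into \cref{contiguityPhin}, which exhibits $\Phi_n^{(\ell)}$ as a three-fold nested fraction whose innermost entry is $\pm\Phi_n^{(\ell+1)}$, and then applies Lemma~2.2 of \cite{Han16} (quoted as \cref{lem22}) once per nesting level: each application translates the determinant index by $k+1$ and contributes a sign $(-1)^{k(k+1)/2}$, with an extra $(-1)^j$ each time a series is negated. The total shift $n+1$ appears as $(\ell+1)+(n-\ell-1)+1$, and the exponent $j+n(n+2\ell-1)/2$ drops out of a short mod-$2$ computation. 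Your proposal amounts to re-deriving the iterated form of that lemma by hand: you would read the data $(k_i,v_i)$ of two consecutive shifted $H$-fractions from \cref{HFPhishifts}, observe that the second is the first with an initial block of total Hankel weight $n+1$ deleted, and then verify directly from \eqref{HankelHF} that such a truncation translates the sequence $(s_p)$, $(\eps_p)$ and the $v$-products in the stated way. That is correct and purely combinatorial, but it is heavier bookkeeping (the exponents $v_i^{s_p-s_i}$ do not truncate term by term, and the three exceptional shapes at $\ell=n-1,n,n+1$ must be treated separately), whereas \cref{lem22} encapsulates exactly this truncation-translation phenomenon in one stroke. One small correction: the explicit values of \cref{thm-Hankel0} are not needed anywhere in this argument, and the induction on $\ell$ you suggest is unnecessary since each consecutive pair is handled independently.
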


Note that Theorems~\ref{MainValPer}, \ref{MainGR} and \ref{MainContiguity} were  proved by  V.~Ovsienko and the second author in \cite{OP25} for $n=1$ and $n=2$ (see Theorem~1.6 and Theorem~1.8 therein), but only conjectured in the general case, excepting relation \eqref{contiguity} which was proved  when $\ell=n$ for any $n$ (see Proposition~4.1 therein). 

The reader has certainly noticed that all our statements  deal with shifted sequences $\De^{(\ell)}$ for a parameter $\ell$ such that $0≤\ell≤n+1$. As far as concerns the case $\ell≥n+2$, computer experimentation incite us to formulate the following conjecture, that we are not able to prove so far, except for part (1) when $n=1$, see Theorem~1.6 in \cite{OP25}.

\begin{mainconj} \emph{The case $\ell≥n+2$.}\label{Conjn+2}
\begin{enumerate}
\item The sequence  $\Delta_j^{(n+2)}$ is $2n(n+1)$-periodic when $n$ is even and $2n(n+1)$-antiperiodic (hence $4n(n+1)$-periodic) when $n$ is odd, and consists of~$-2,-1,0,1,2$ only (except if $n=1$: it can only take the values $-1,0,1$).
\item All  sequences  $\Delta_j^{(n+2)}$ with $\ell≥n+3$ are unbounded.
\end{enumerate}
\end{mainconj}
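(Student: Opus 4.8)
The plan is to reduce everything to the three-term Desnanot--Jacobi (Dodgson condensation) identity for Hankel matrices, which, applied to the matrix $(f_{\ell+a+b})_{a,b=0}^{j-1}$, reads
\begin{equation*}
\De^{(\ell)}_j\,\De^{(\ell+2)}_{j-2}=\De^{(\ell)}_{j-1}\,\De^{(\ell+2)}_{j-1}-\bigl(\De^{(\ell+1)}_{j-1}\bigr)^2,\qquad \ell\ge0,\ j\ge2.
\end{equation*}
This is universal (no special feature of $\Phi_n$ enters), and it presents $\De^{(\ell+2)}$ through the two lower shifts $\De^{(\ell)}$ and $\De^{(\ell+1)}$ as a \emph{first-order affine recurrence} in the single unknown sequence $\De^{(\ell+2)}$. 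Since \cref{MainValPer,MainContiguity} determine $\De^{(0)},\dots,\De^{(n+1)}$ completely, taking $\ell=n$ yields $\De^{(n+2)}$, then $\ell=n+1$ yields $\De^{(n+3)}$, and so on: \emph{all} shifts $\ell\ge n+2$ are computed from $\De^{(n)}$ and $\De^{(n+1)}$ by iterating this one identity. Note that $\ell=n+1$ is exactly the value at which the contiguity relation \eqref{contiguity} ceases to apply, which is the structural reason $n+2$ is the threshold.

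For part~(1) I would analyse the case $\ell=n$. Writing $D_j:=\De^{(n+2)}_j$, $A_j:=\De^{(n)}_j$, $B_j:=\De^{(n+1)}_j$ and $P:=2n(n+1)$, the identity becomes $A_jD_{j-2}=A_{j-1}D_{j-1}-B_{j-1}^2$, where $A,B$ are the known $P$-(anti)periodic sequences valued in $\{-1,0,1\}$. On each maximal interval where $A$ does not vanish this is an affine recurrence $D_{j-1}=(A_jD_{j-2}+B_{j-1}^2)/A_{j-1}$ with $\pm1$ coefficients, so $D$ stays integral and its magnitude rises by at most one per step; at each zero of $A$ the identity degenerates to $A_{j-1}D_{j-1}=B_{j-1}^2$, which \emph{resets} $D$ to the explicitly known unit value $B_{j-1}^2/A_{j-1}\in\{-1,0,1\}$. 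Thus $D$ is seeded by $\De^{(n+2)}_0=1$ together with these reset values. Periodicity/antiperiodicity would follow by showing the monodromy of the recurrence over one period $P$ is multiplication by $(-1)^n$ with \emph{zero} additive drift, and the bound $|D_j|\le2$ would follow because the resets fall close enough together that the running magnitude never reaches $3$ (the runs shortening to force the sharper bound $\{-1,0,1\}$ precisely when $n=1$, matching $\De^{(3)}_j(\Phi_1)=1,-1,0,0,\dots$ from \eqref{HankelGold2}).

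For part~(2) I would continue the bootstrap with $\ell=n+1,n+2,\dots$. The same monodromy computation should now exhibit a genuinely nonzero additive drift $E$ (or a homogeneous multiplier of modulus $>1$) over one period, so that $D_{j+mP}=D_j+mE+\cdots$ grows without bound. This is exactly the linear growth already visible in $\De^{(4)}_j(\Phi_1)=1,2,0,-2,-3,-4,0,4,5,6,\dots$, the first unbounded shift for $n=1$. Hence part~(2) reduces to proving that the period-monodromy drift is nonzero for every $\ell\ge n+3$ and every $n$, the growth being amplified at each iteration through the squared inhomogeneous term $(\De^{(\ell+1)}_{j-1})^2$.

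The main obstacle is precisely this monodromy computation, because of the zeros (gaps) of the lower-shift sequences: where a coefficient sequence vanishes the recurrence changes character, so one must know the exact positions and lengths of the zero-blocks of $\De^{(n)}$ and $\De^{(n+1)}$ --- i.e.\ the explicit formulas of \cref{thm-Hankel0} --- and carry out a case analysis whose size grows with $n$. Establishing that the drift vanishes for $\ell=n+2$ but is nonzero for $\ell\ge n+3$, \emph{uniformly in $n$}, is the delicate point; unlike a bounded periodic sequence, unboundedness cannot be certified from finitely many terms, which is presumably why the statement could so far only be checked experimentally. A parallel route would be to extend the $H$-fraction computation of \cref{MainHF} (via the algorithm \alg) to the shifts $\ell\ge n+2$ and read the determinants off Han's $H$-fraction formula; but for these shifts the intermediate data lose the rigid $\cron$-shaped pattern that makes the expansion periodic with unit leading coefficients, and no closed form is apparent --- the same structural breakdown, seen from the continued-fraction side.
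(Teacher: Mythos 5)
This statement is \cref{Conjn+2}, which the paper explicitly presents as an open conjecture: the authors state they are unable to prove it except for part~(1) in the single case $n=1$ (where it follows from Theorem~1.6 of \cite{OP25}). So there is no proof in the paper to compare against, and your proposal does not close the gap either: you yourself leave unestablished the two decisive claims, namely that the period-monodromy of your condensation recurrence has zero additive drift for $\ell=n+2$ and nonzero drift for every $\ell\ge n+3$ and every $n$. A text that defers exactly the points where the difficulty lives is a research plan, not a proof.

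Beyond that, there is a concrete flaw in the foundation of the plan. The Desnanot--Jacobi identity you write is correct, but it does \emph{not} determine $\De^{(\ell+2)}$ from $\De^{(\ell)}$ and $\De^{(\ell+1)}$: whenever two consecutive terms $\De^{(\ell)}_{j-1}$ and $\De^{(\ell)}_{j}$ both vanish, the identity forces $\De^{(\ell+1)}_{j-1}=0$ as well and then degenerates to $0=0$, giving no information about $\De^{(\ell+2)}_{j-1}$ or $\De^{(\ell+2)}_{j-2}$. Such consecutive zeros genuinely occur: by \cref{thm-Hankel0} and the remark following it, $\De^{(0)}$ contains blocks of up to $n$ consecutive zeros, and by the contiguity relations \eqref{contiguity} the sequence $\De^{(n)}$ is, up to signs, a translate of $\De^{(0)}$, so it inherits these blocks. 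Hence your ``reset'' mechanism fails precisely at the zero blocks, the bootstrap stalls, and the higher shifts are not computable from the lower ones by condensation alone --- extra input about $\Phi_n$ (e.g.\ an $H$-fraction expansion of $\Phi_n^{(\ell)}$ for $\ell\ge n+2$, which as you note is not available) would be required. For part~(2) there is the additional, and as you acknowledge unaddressed, issue that unboundedness cannot be certified from finitely many terms, so even a correct monodromy framework would still need a uniform-in-$n$ argument that no one currently has.
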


However, by looking at the functional equation satisfied by the shifted $q$-metallic numbers $\Phi_n^{(\ell)}$ and applying a result of \cite{Han16}, we show that weak versions of \cref{MainHF} and \cref{MainValPer} hold, even in case $\ell≥n+2$:

\begin{mainthm}[{Periodicity modulo $p$}]\label{MainModulop}
Let $p$ be a prime number. For any $\ell≥0$, the Hankel continued fraction of the shifted $q$-metallic number $\Phi_n^{(\ell)}$ and the corresponding Hankel determinants $\De_j^{(\ell)}$ ($j≥0$) are ultimately periodic modulo $p$.
\end{mainthm}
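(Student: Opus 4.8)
The plan is to show that each shifted series $\Phi_n^{(\ell)}$ is algebraic of degree at most~$2$ over $\F_p(q)$ and then to invoke the periodicity criterion of \cite{Han16}. Recall that an $H$-fraction is defined for an \emph{arbitrary} power series and comes with an explicit product formula expressing the Hankel determinants in terms of its data; consequently, ultimate periodicity of the $H$-fraction and ultimate periodicity of the $\De_j^{(\ell)}$ modulo~$p$ are two aspects of a single statement, and it is enough to establish the former.

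First I would record a functional equation for $\Phi_n$ itself. Since the continued fraction \eqref{defPhin} is $2$-periodic, its tail reproduces $\Phi_n$, so
\[
\Phi_n=[n]_q+\cfrac{q^{n}}{[n]_{q^{-1}}+\cfrac{q^{-n}}{\Phi_n}}.
\]
Clearing denominators and using the identity $[n]_{q^{-1}}=q^{1-n}[n]_q$ turns this into the quadratic relation
\[
q[n]_q\,\Phi_n^{2}+\bigl(1-q[n]_q^{2}-q^{2n}\bigr)\Phi_n-[n]_q=0,
\]
whose coefficients lie in $\Z[q]$; thus $\Phi_n$ is quadratic over $\Q(q)$. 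To pass to the shifts, I would use \eqref{defFl} in the form $\Phi_n=q^{\ell}\Phi_n^{(\ell)}+P_\ell$ with $P_\ell:=\sum_{i=0}^{\ell-1}f_iq^i\in\Z[q]$, and substitute; this produces a quadratic equation for $\Phi_n^{(\ell)}$ over $\Z[q]$ with leading coefficient $q^{2\ell+1}[n]_q$.

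Because the Taylor coefficients $f_i$ are integers, reduction modulo~$p$ is legitimate and yields a well-defined series $\Phi_n^{(\ell)}\in\F_p[[q]]$ satisfying the reduced equation. Here the essential nondegeneracy is free: the leading coefficient $q^{2\ell+1}[n]_q$ survives mod~$p$ since $[n]_q$ has constant term~$1$, so $\Phi_n^{(\ell)}$ is genuinely algebraic of degree $\le 2$ over $\F_p(q)$ (and in the exceptional case where it drops to degree~$1$, i.e. to a rational function, its $H$-fraction is finite and the conclusion is trivial). It then remains to apply the periodicity criterion of \cite{Han16}, which gives that the $H$-fraction of $\Phi_n^{(\ell)}$ modulo~$p$ is ultimately periodic.

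The step demanding the most care — and the one I regard as the main obstacle — is the transfer from periodicity of the $H$-fraction data to periodicity of the determinants, together with the verification that the hypotheses of the \cite{Han16} criterion genuinely hold after reduction (that the successive complete quotients driving the $H$-fraction algorithm remain well-defined over $\F_p$). Concretely, the product formula gives only \emph{quasi}-periodicity $\De_{j+T}^{(\ell)}\equiv c\,\De_j^{(\ell)}\pmod{p}$ for some multiplier $c\in\F_p$; genuine ultimate periodicity then follows because, when $c\neq0$, it has finite order in $\F_p^{\times}$, so replacing $T$ by a suitable multiple absorbs the multiplier (and if $c=0$ the determinants are eventually identically zero). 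This is precisely why one obtains periodicity modulo~$p$ rather than the exact $\{-1,0,1\}$-valued periodicity of \cref{MainValPer}, and why the period may be larger than in \cref{MainHF}.
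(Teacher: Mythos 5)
Your overall strategy is the one the paper uses: exhibit a quadratic functional equation over $\Z[q]$ for each shift $\Phi_n^{(\ell)}$ and invoke Theorem~1.1 of \cite{Han16}. Your quadratic equation for $\Phi_n$ is correct (it is $[n]_q$ times \eqref{FE-Phin}), and the substitution $\Phi_n=q^\ell\Phi_n^{(\ell)}+P_\ell$ is the right move. But there is a concrete gap in the verification of the hypotheses of the cited theorem, and that verification is the entire content of the paper's proof. The criterion does not ask merely that the series be algebraic of degree $\le 2$ over $\F_p(q)$: it asks for an equation $A^{(\ell)}+B^{(\ell)}\Phi_n^{(\ell)}+C^{(\ell)}\bigl(\Phi_n^{(\ell)}\bigr)^2=0$ normalized as in \eqref{condABC}, i.e.\ with $B^{(\ell)}(0)=1$, $C^{(\ell)}\neq0$ and $C^{(\ell)}(0)=0$ (this is what keeps the algorithm \alg well defined over $\F_p$ and forces eventual recurrence of the finitely many admissible triples). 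The equation you obtain by direct substitution has middle coefficient $q^{\ell}\bigl(B+2CP_\ell\bigr)$, which \emph{vanishes} at $q=0$ for every $\ell\ge1$, so as written your equation does not satisfy the hypotheses, and the one condition you do check (that the leading coefficient survives mod~$p$) is not the one at issue. The repair is to divide the whole equation by $q^{\ell}$ — the constant coefficient $A+BP_\ell+CP_\ell^2$ is divisible by $q^{\ell}$ because it equals $-q^{\ell}(B+2CP_\ell)\Phi_n^{(\ell)}-q^{2\ell}C\bigl(\Phi_n^{(\ell)}\bigr)^2$ as a power series — after which $B^{(\ell)}=B+2CP_\ell$ has constant term $1$ and $C^{(\ell)}=q^{\ell+1}[n]_q$, and all conditions hold. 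The paper does exactly this, one shift at a time, via \cref{lemma:cut} and an induction on $\ell$ showing $B^{(\ell)}(0)=1$ and $C^{(\ell)}=q^{\ell+1}$.

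Your final paragraph about quasi-periodicity multipliers is not needed: the transfer from periodicity of the $H$-fraction data to periodicity of the determinants is part of the conclusion of Theorem~1.1 of \cite{Han16}, not something left to the user (and the multiplier relating $\De^{(\ell)}_{s_{p+T}}$ to $\De^{(\ell)}_{s_p}$ in formula \eqref{HankelHF} is in general index-dependent rather than a single constant $c$, so your absorption argument would itself need more care if it were required).
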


%%%%%%%%%%%%%%%%%%%%%%%%
\medbreak \noindent \textbf{Organization of the paper}. 
\cref{sec-bg} contains a brief presentation of  S.~Morier-Genoud and V.~Ovsienko's theory of $q$-deformation of real numbers, applied in particular to metallic ratios. We recall also from \cite{Han16} some fundamental results on Hankel continued fractions (alias $H$-fractions), since it is the main tool we will use to compute Hankel determinants. We also explain how close Hankel continued fractions are to Artin's regular continued fractions in the context of Laurent series.

In \cref{sec-HF} we prove \cref{MainHF} i.e. the $H$-fraction expansion of $\Phi_n$. It will be obtained by using an algorithm well suited to power series that are solution of a quadratic equation with polynomial coefficients. This algorithm appeared first in the paper \cite{Han16} and its application to our $q$-metallic numbers $\Phi_n$ represents a quite technical work.

\cref{MainValPer,MainGR} will be obtained with the following strategy: instead of proving them directly for all values of the parameter $\ell$, we note that cases $1≤\ell≤n+1$ follow at once from the case $\ell=0$ and \cref{MainContiguity}. This is why \cref{section-period} is devoted to the proof of \cref{MainValPer}  in the case $\ell=0$ only. The main tool here is the Hankel determinant formula for $H$-fractions recalled in \cref{sec-bg}.
It is worth noting that \cref{MainValPer} will be established without calculating explicitly the Hankel determinants, since for showing periodicity and specifying the image set we need to understand more the pattern they form than their exact values. 

Yet, computing the Hankel determinants $\De_j=\De_j^{(0)}(\Phi_n)$ is inevitable for our other results and it is done in \cref{section-HD0}. We exploit the (anti-)periodicity shown by \cref{MainValPer} together with some extra symmetry property to restrict ourselves to calculate  the  values of $\De_{j}$ only for a half-period (more or less), and we obtain \emph{explicit formulas} similar to the ones in \eqref{HankelGold2}. They are presented in \cref{thm-Hankel0}.

From these exact values of the Hankel determinants we then derive their Gale-Robinson dynamics and prove  \cref{MainGR} in \cref{sec-GR}, still in the case $\ell=0$. This is an elementary but quite tedious task.

Finally, \cref{section-shifts} is devoted to shifted Hankel determinants $\De_j^{(\ell)}$ with $\ell≥1$. We start by proving \cref{HFPhishifts} which expresses the $H$-fraction expansion  of the shifted functions $\Phi_n^{(\ell)}$  ($1≤\ell≤n+1$) in terms of the one given in \eqref{Hfrac-Phin} for the initial function $\Phi_n=\Phi_n^{(0)}$. This result interconnects the $H$-fractions expressions of  all functions $\Phi_n^{(\ell)}$, $0≤\ell≤n+1$ (see \cref{contiguityPhin}) and leads to the proof of \cref{MainContiguity} which, as noticed before, implies eventually the statements of \cref{MainValPer,MainGR} in the remaining cases $1≤\ell≤n+1$. In this section we also prove \cref{MainModulop}.

Let us add a comment to our presentation. Although the cases of $\Phi_1$ and $\Phi_2$ were treated in \cite{OP25} with much detail and very explicit formulas, they do not really represent the general case $\Phi_n$ which is more complex in nature. Moreover, our proofs are quite technical and lengthy. We thus think it is worthwhile to  illustrate our present  results on another concrete case, more representative of the general case but not too complicated either. We thus have chosen to focus on the case $n=5$ throughout the paper to examplify the main results and the more technical proofs.

\medbreak \noindent
\textbf{Note.} Since all results stated in this article were  proved in the cases $n=1$ and $n=2$ in \cite{OP25}, from now on it will be convenient to always assume that $n≥3$,  in order to avoid special discussions for the limit case $n=2$ in some results.

\medbreak \noindent
\textbf{Acknowledgements.} 
We are grateful to Valentin Ovsienko for his stimulating interest and his useful comments on this work.

%%%%%%%%%%%%%%%%%%%%%%%%

%%%%%%%%%%%%%%%%%%%%%%%%
%%%%%%%%%%%%%%%%%%%%%%%%
\section{Background material}\label{sec-bg}
%%%%%%%%%%%%%%%%%%%%%%%%
%%%%%%%%%%%%%%%%%%%%%%%%

%%%%%%%%%%%%%%%%%%%%%%%%
\subsection{S.~Morier-Genoud and V.~Ovsienko's $q$-numbers} \label{sec-qreals}
%%%%%%%%%%%%%%%%%%%%%%%%

Our article is devoted to the continuous fraction expansion and  Hankel determinant properties of the \emph{$q$-deformation} of particular real numbers. Let us define precisely what kind of quantization is meant here and give a brief introduction to the subject.

Quantization of integers is a very classical topic which goes back to Euler and Gauss:  any integer $n≥0$ can be deformed as a polynomial
\begin{equation}\label{EGEq}
[n]_q:=\frac{1-q^n}{1-q}=1+q+q^2+\cdots+q^{n-1}.
\end{equation}
Equivalently, one can define $[n]_q$ as the unique solution of the recurrence 
\begin{equation}\label{rec}
[n+1]_q=q[n]_q+1,\qquad [0]_q=0.
\end{equation}
First form of definition \eqref{EGEq} can also be used to define $q$-deformation of negative integers $n<0$:
$$[n]_q:= \frac{1-q^n}{1-q}=-q^{-1}-q^{-2}-\cdots -q^{n}$$
and this gives a polynomial in $q^{-1}$.
A considerable amount of related objects have been based on definition \eqref{EGEq}, e.g. $q$-factorials, $q$-binomials, $q$-hypergeometric functions, $q$-calculus, etc. used in combinatorics, number theory, fractals, operator theory, mathematical physics… But until recently we missed a really satisfactory extension to more general numbers (reals, or even just rationals). For instance, for a rational $m/n$, the naïve ideas
$$\left[\fr{m}{n}\right]_q:=\fr{[m]_q}{[n]_q}
\qquad\text{or}\qquad
\left[\fr{m}{n}\right]_q:=\frac{1-q^{m/n}}{1-q}$$ 
both lead to the same notion (up to a rescaling) which lacks crucial properties, such as verifying \eqref{rec} or being a rational function of $q$; it is then difficult to have nice enumerative or geometric interpretations.

In 2018, S.~Morier-Genoud and V.~Ovsienko began a beautiful construction of  
 $q$-analogues for rational, real and even complex numbers (see the seminal papers \cite{MGO20,MGO22,Ovsienko21}) which in a very few years has shown to be  successful in the mathematical community, thanks to connections with many  topics: enumerative combinatorics, cluster algebras, Markov-Hurwitz approximation theory, braid groups, combinatorics of posets, Calabi-Yau triangulated categories, Lie algebras of differential operators, supersymmetry and supergeometry… see e.g. \cite{MSS21,BBL23,CO23,KOM23,MO24,MGOV24,Thomas24,KMRWY25,Jouteur25}.

Several equivalent models are available for Morier-Genoud \& Ovsienko's $q$-real numbers. One of them involves continued fractions and is the best suited for our purposes.

Let $x\in\R$ and consider its regular continued fraction expansion (finite if and only if $x\in\Q$)
$$
x\;=\;
a_0 + \frac{1}{a_1}\cfp\frac{1}{a_2}\cfp\frac {1}{a_3}\cfp\cfd
$$
with $a_i$ integers, positive for $i≥1$. The \emph{$q$-deformation} or \emph{$q$-analogue} of~$x$ is  the following algebraic continued fraction:
\begin{equation}\label{defq}
[x]_q:=[a_0]_{q} + 
	\frac{q^{a_{0}}}{[a_1]_{q^{-1}}}\cfp \frac{q^{-a_{1}}}{[a_{2}]_{q}}\cfp
	\frac{q^{a_{2}}}{[a_3]_{q^{-1}}}\cfp \frac{q^{-a_{3}}}{[a_{4}]_{q}}
\cfp\cfd
\end{equation}
where $[n]_q$ stands for the $q$-integer as in~\eqref{EGEq}, and $[n]_{q^{-1}}=q^{1-n}[n]_q$ is the same expression with reciprocal parameter.
Note that, when infinite, the right-hand side in \eqref{defq} always converges in the sense of formal Laurent series.

Let us indicate some important characteristics of these $q$-numbers; we cite the original references but most of the properties are explained also in  the short survey \cite{Ovsienko23}. 
\begin{enumerate}
\item {\cite{MGO20}} For $x\in\Z$, $[x]_q$ coincides with the classical definition \eqref{EGEq} of Euler and Gauss.
\item {\cite{MGO22}} For any $x\in\R$, the Laurent series $[x]_q$ has integer coefficients. More precisely the quantification map $[\,\cdot\,]_q$ has the following images
\begin{equation*}
[\,\cdot\,]_q:\quad \Q_{≥0}\to\Z_{≥0}(q),\quad \Q\to\Z(q),\quad \R_{≥0}\to\Z[[q]],\quad \R\to\Z(\!(q)\!).
\end{equation*} 
\item {\cite{MGO22,LMG21}} We have, for any $x\in\R$,
\begin{equation*}
[x+1]_q=q[x]_q+1,\qquad \left[-\fr 1{x}\right]_q=-\fr 1{q[x]_q}.
\end{equation*}
These two crucial properties  can be interpreted as commutation of the quantification map $[\,\cdot\,]_q$  with the action on $\Z(\!(q)\!)\cup\{\infty\}$ of a group of linear-fractional transformations which is isomorphic to the modular group $\PSL(2,\Z)$. Moreover, $[\,\cdot\,]_q$ is the unique $\PSL(2,\Z)$-invariant quantification which fixes $0$; see \cite{OP25}, §2.2. Such an invariance is at the heart of the construction and plays a crucial role in the proof of numerous properties of these $q$-numbers. Also, it can  be understood as invariance with respect to the Burau representation of
the braid group $B_3$ \cite{BBL23,MGOV24};
\item The radius of convergence of the Taylor series at $0$  of  the $q$-golden number $\left[\fr{1+\sqrt{5}}{2}\right]_q=\Phi_1(q)$ equals $\fr{3-\sqrt{5}}{2}$, and it is conjectured that the radius of convergence of the Taylor series at $0$  of the $q$-deformation of any positive real number is always $≥\fr{3-\sqrt{5}}{2}$  \cite{LMGOV24}. This statement has already been verified for all $q$-metallic numbers $\Phi_n(q)$ (see \cite{Ren22,Ren23}) and for a large class of $q$-deformed positive quadratic irrationals (see \cite{Leclere24}, §4.4). 
For general rational or real numbers, progress has been made recently, see \cite{EGMS,Etingof25}.
\item {\cite{LMG21}} When $x=\fr{r\pm\sqrt{p}}{s}$ is a quadratic irrational  number ($p,r,s$ integers, $p>0$), $[x]_q$ is itself of the form 
\begin{equation}\label{q-quad}
[x]_q=\fr{R(q)\pm \sqrt{P(q)}}{S(q)}
\end{equation}
with $P,R,Q\in\Z[q]$ and $P$ a palindrome.
\end{enumerate}

Examples illustrating formula \eqref{q-quad} are yielded by the  $q$-deformation  $\Phi_n(q)$ of the metallic numbers $\phi_n=\fr{n+\sqrt{n^2+4}}{2}$, $n≥1$. Indeed,  using definition \eqref{defPhin} one can see that $\Phi_n$ is characterized by the functional equation
\begin{equation}\label{FE-Phin}
q\, \Phi_n(q)^2+\left((1+q^n)(1-q) - q\left[n\right]_q\right)\Phi_n(q)=1,
\end{equation}
from which follows at once a formula of  type \eqref{q-quad}:
\begin{equation*}
	\Phi_n(q)=\frac{1}{2q}\left(q[n]_q+(q^n+1)(q-1)+\sqrt{\Bigl(q[n]_q+(q^n+1)(q-1)\Bigr)^2+4q}\right).
\end{equation*}
See \cite{LMG21} or Proposition~2.5 in \cite{OP25} for details. Another important property we will use is that $\Phi_n$ admits the following Taylor series expansion about zero:
\begin{equation}\label{SE-Phin}
\Phi_n(q)=1+q+\cdots+q^{n-1}+q^{2n}+\sum_{i=2n+1}^{+\infty}\kappa_i q^i,
\end{equation}
where coefficients $\kappa_i$ are integers; see Corollary~2.6 in \cite{OP25}. 

\begin{ex} The power series expansion of $\Phi_1(q)$ was given in \eqref{SEPhi1}. Let us write also the expansions of $\Phi_2(q)$, $\Phi_5(q)$ and $\Phi_{10}(q)$, respectively:
\begin{align*}
\Phi_2(q)&=1+q+q^4-2q^6+q^7+4q^8-5q^9-7q^{10}+ 18q^{11}+ 7q^{12}-55q^{13}+ 18q^{14}\\
&\quad+ 146q^{15}- 155q^{16} - 322q^{17}+692q^{18}+ 476q^{19}- 2446q^{20}+ 307 q^{21} + 7322 q^{22} + \cdots\\
\Phi_5(q)&=1 +  q + q^{2} + q^{3} + q^{4} + q^{10} - q^{12} - q^{13} + 3 q^{16} + 3 q^{17}-2 q^{18} -7 q^{19}\\
&\quad -4q^{20} - q^{21} + 10 q^{22} + 21 q^{23} + 9 q^{24} -30 q^{25} -44 q^{26} -28q^{27} + 27 q^{28} + 115 q^{29}+ \cdots\\
\Phi_{10}(q)&=1 + q + q^{2} + q^{3} + q^{4} + q^{5} + q^{6} + q^{7} + q^{8} + q^{9} + q^{20} - q^{22} - q^{23} +  q^{25}
\\
&\quad +  q^{26} - q^{28} - q^{29} - q^{30} + 3 q^{31} + 4 q^{32} - q^{33} -7 q^{34}
-6 q^{35} + 3 q^{36} + 11 q^{37} + 8 q^{38}+\cdots
\end{align*}
\end{ex}

%%%%%%%%%%%%%%%%%%%%%%%%%
\subsection{$H$-fractions and their Hankel determinants} 
\label{secH-frac}
%%%%%%%%%%%%%%%%%%%%%%%%%
In this subsection we recall some material from \cite{Han16}. Fix a field $\F$, an indeterminate $q$, and let $\de$ be a positive integer. A \emph{super $\de$-fraction} is a continued fraction of the form
\begin{equation}
\label{superfrac}
F(q)=\fr{v_0\,q^{k_0}}{1+q u_1(q)}\cfp\fr{-v_1\,q^{k_0+k_1+\de}}{1+qu_2(q)}\cfp
\fr{-v_2\,q^{k_1+k_2+\de}}{1+q u_3(q)}\cfp
\fr{-v_3\,q^{k_2+k_3+\de}}{1+q u_4(q)}\cfp\cfd
\end{equation}
where $v_i\in \F^*$, $k_i\in\Z_{≥0}$, and $u_i\in\F[q]$ are polynomials such that $\deg(u_i)≤k_{i-1}+\de-2$.

One of the main interests of this notion is that, not only \eqref{superfrac} gives always a power series, but also, conversely, for a given positive integer $\de$, any power series admits one and only one super $\de$-fraction expansion (finite if and only if $F$ is rational). We will need to recall the proof of this result at the beginning of \cref{section-algo}.

\begin{ex}\label{ex-3f-Phin} According to Theorem~3.2 in \cite{OP25}, for any $n≥1$ the $q$-deformed metallic number $\Phi_n$ admits the following $1$-periodic expression as a super $3$-fraction:
\begin{equation*}
\Phi_n(q)
= [n]_q+\fr{q^{2n}}{\cron}\cfp\left(\fr{q^{2n+1}}{\cron}\cfp\right)^*
\end{equation*}
where $\cron$ was defined in \eqref{defnq}. To be accurate, it is the shifted function 
$$\Phi_n^{(n+1)}(q)=\frac{\Phi_n(q)-[n]_q}{q^{n+1}}$$
(see definition \eqref{defFl}) which  matches exactly definition \eqref{superfrac} of a super $3$-fraction, with $k_j\equiv n-1$ for $j≥0$, $v_0=1$ and $v_j\equiv -1$ for $j≥1$.
\end{ex}

Super $\de$-fractions with $\de=2$ are of special importance, they are called \emph{Hankel continued fractions}, or more simply \emph{$H$-fractions}. Firstly, they generalize the classical notion of regular $C$-fraction and regular $J$-fraction (when all $k_i$ are zero and $\de=1$, $\de=2$, respectively) and offer the possibility to compute the Hankel determinants of $F$ with the following striking formula (see Theorem 2.1 of~\cite{Han16}): if we define
\begin{equation}
\label{s-def}
s_p:=p+\sum_{i=0}^{p-1}k_i
\quad\text{if }p≥1,
\quad s_0:=0,
\quad\cS:=\{s_p,\ p≥0\}
\end{equation}
and
\begin{equation}
\label{e-def}
\eps_p:=\sum_{i=0}^{p-1}\frac{k_i\left(k_i+1\right)}{2}
\quad\text{if }p≥1
\quad\text{and}\quad \eps_0:=0,
\end{equation}
then 
\begin{equation}
\label{HankelHF}
\begin{dcases*}
\Delta_{s_p}(F)
=(-1)^{\eps_p}v_0^{s_p}v_1^{s_p-s_1}v_2^{s_p-s_2}\cdots v_{p-1}^{s_p-s_{p-1}}&if $p≥1$,\\\Delta_j\left(F\right)=0 &if $j\notin\cS$.
\end{dcases*}
\end{equation}
(Recall that, by convention above, $\De_{s_0}(F)=\De_0(F)=1$.) In this way, classical formulas known for $C$-fractions and $J$-fractions (see e.g. Theorems~29 and~30 in \cite{Krattenthaler05}) are generalized. Let us mention that similar formulas to \eqref{HankelHF} had been studied earlier in several references (mostly independently); see \cite{Han20}, the historical remark after Theorem~2.2.

Besides formula  \eqref{HankelHF}, a second reason for which we think that Hankel continued fractions have a particular interest is their strong connection with a classical continued fraction expansion in the field $\F(\!(q)\!)$ of Laurent series. Let us elaborate, since this relation was not yet discovered in the  paper \cite{Han16}. 

E.~Artin  has shown \cite{Artin24} that any Laurent series in $q$
$$
f(q)= f_{-n} q^{-n}+ \cdots+ f_{-1} q^{-1}+  f_0+ f_{1} q+ f_{2} q^{2}+ \cdots 
$$
can be expanded, in a unique manner, as a continued fraction of the form
\begin{equation}\label{Artin}
f(q)= a_0(1/q)+\fr 1{a_1(1/q)}\cfp\fr 1{a_2(1/q)}\cfp\fr 1{a_3(1/q)}\cfp\cfd
\end{equation}
where $a_0$ is a polynomial and, for any $j≥1$, $a_j$ is a non constant polynomial. Artin's expansion is traditionnaly called the \emph{regular continued fraction expansion} of $f$, because of the familiarity with the classical notion of regular continued fraction expansion for real numbers\footnote{Usually, Artin's theorem is stated in the field $\F(\!(1/q)\!)$ of Laurent series in the variable $1/q$ but we adapt it to our context.}. 

It turns out that  regular continued fractions and Hankel continued fractions are closely and simply related when we restrict to power series. Take such a function
$$f(q)= f_{1} q+f_{2} q^{2}+f_3 q^3+ \cdots 
$$
\emph{with no constant term}
and write its regular expansion \eqref{Artin}:
\begin{equation}\label{ArtinPS}
f(q)=\fr 1{a_1(1/q)}\cfp\fr 1{a_2(1/q)}\cfp\fr 1{a_3(1/q)}\cfp\cfd
\end{equation}
Fix $j≥1$. Since $a_j$ is a non constant polynomial, there exist two natural numbers $m_j$ and $n_j$, with $1≤n_j≤m_j$, and a family of scalars $(c_{j,r})_{r=n_j}^{m_j}$, with $c_{j, n_j}\not=0$ and $c_{j, m_j}\not=0$, such that
\begin{equation*}
a_j(1/q) 
=\frac {c_{j,n_j} }{q^{n_j}} +  \frac {c_{j,n_j+1}}{q^{n_j+1}} + \cdots +  \frac {c_{j, m_j}}{q^{m_j}}.
\end{equation*}
We then write
\begin{equation*}
a_j(1/q) 
=\bigl(1 + u_j(q)q\bigr)  \frac {c_{j, m_j}}{q^{m_j}}
\end{equation*}
where
\begin{equation*}
u_j(q) 
=   \frac{c_{j,n_j}}{c_{j,m_j}} q^{m_j-n_j-1} + \frac{c_{j,n_j+1}}{c_{j,m_j}} {q^{m_j-n_j-2}} + \cdots+ \frac{c_{j,m_j-1}}{c_{j,m_j}}
\end{equation*}
is a polynomial in $q$ satisfying  
\begin{equation}\label{deg-uj}
\deg(u_j)= m_j - n_j-1≤m_j-1.
\end{equation}
But then \eqref{ArtinPS} reads
\begin{align*}
f(q)
&=\frac{1}{ (1 + u_1(q)q)  \frac {c_{1, m_1}}{q^{m_1}}}
\cfp \frac{1}{ (1 + u_2(q)q)  \frac {c_{2, m_2}}{q^{m_2}}}
\cfp  \frac{1}{ (1 + u_3(q)q)  \frac {c_{3, m_3}}{q^{m_3}}}
\cfp\cfd\\
&=\frac{c_{1, m_1}^{-1}{q^{m_1}}}{ 1 + u_1(q)q}
\cfp \frac{c_{1, m_1}^{-1} c_{2, m_2}^{-1}{q^{m_1+m_2}}}{ 1 + u_2(q)q}
\cfp \frac{c_{2, m_2}^{-1} c_{3, m_3}^{-1}{q^{m_2+m_3}}}{ 1 + u_3(q)q}
\cfp\cfd
\end{align*}
This expression is exactly of the form
\begin{equation*}
f(q)=q\,F(q)
\end{equation*}
where 
\begin{equation*}
F(q)=\fr{v_0\,q^{k_0}}{1+q u_1(q)}\cfp\fr{-v_1\,q^{k_0+k_1+2}}{1+qu_2(q)}\cfp
\fr{-v_2\,q^{k_1+k_2+2}}{1+q u_3(q)}\cfp\cfd
\end{equation*}
and
\begin{equation}\label{dictionary}
\begin{dcases*}
v_0=c_{1, m_1}^{-1}&\\
v_j=-\left(c_{j, m_j} c_{j+1, m_{j+1}}\right)^{-1}&for $j≥1$\\
k_j=m_{j+1}-1&for $j≥0$.
\end{dcases*}
\end{equation}
Since $\deg(u_j)≤k_{j-1}$ by \eqref{deg-uj}, the expression of $F(q)$ above is a $H$-fraction expansion (take \eqref{superfrac} with $\de=2$) for the parameters \eqref{dictionary}.

In conclusion, Artin's regular continued fraction expansion for the power series without constant term $f(q)$ yields a Hankel continued fraction expansion for the power series $F(q)=f(q)/q$. And conversely, the Hankel continued fraction expansion for any power series $g(q)$ yields the regular continued fraction expansion for $(g(q)-g(0))/q$. We thus have a correspondance between the two kinds of expansion and it is easy to switch from one model to the other, thanks to the conversion formulas \eqref{dictionary}.

As an example, consider a regular $J$-fraction expansion
\begin{equation*}
F(q)=\fr{v_0}{1+ u_1 q}\cfp\fr{-v_1\,q^{2}}{1+u_2 q}\cfp
\fr{-v_2\,q^{2}}{1+ u_3q}\cfp\cfd
\end{equation*}
where $u_i,v_i$ are scalars and $v_i$ are nonzero. It is nothing but a $H$-fraction with all $k_j=0$, and if $$f(q)=f_0+qF(q),$$ then $f(q)$ admits a regular continued fraction \eqref{ArtinPS} with all $a_j$ polynomials of degree $1$ and determined by the parameters of the $J$-fraction as follows: 
\begin{equation*}
f(q)=f_0+\frac{1}{c_1+\fr{d_1}{q}}\cfp\frac{1}{c_2+\fr{d_2}{q}}\cfp\frac{1}{c_3+\fr{d_3}{q}}\cfp\cfd
\end{equation*}
with
\begin{equation*}
\begin{dcases*}
d_j≠0&for $j≥0$\\
v_0=d_1^{-1}&\\
v_j=-(d_j d_{j+1})^{-1}&for $j≥1$\\
u_j=c_j / d_j&for $j≥0$.
\end{dcases*}
\end{equation*}
Such a transformation for regular $J$-fractions may be considered as folklore and not necessarily connected to Artin's expansion; it appears e.g. in \cite{Wall}, §42.

%%%%%%%%%%%%%%%%%%%%%%%%%
%%%%%%%%%%%%%%%%%%%%%%%%
\section{\texorpdfstring{The Hankel continued fraction expansion for $\Phi_n$}{The Hankel continued fraction expansion}}
\label{sec-HF}
%%%%%%%%%%%%%%%%%%%%%%%%%
%%%%%%%%%%%%%%%%%%%%%%%%

We already recalled the examples of $H$-fractions \eqref{HFGold} and \eqref{HFSilver} that were obtained for $\Phi_1(q)$ and $\Phi_2(q)$ in \cite{OP25}. In this section we prove that the $H$-fraction presentation for $\Phi_n$ stated in \cref{MainHF} is actually valid for all $n≥3$. It is a quite difficult problem, and we will solve it by applying a variant of an algorithm which was  introduced in \cite{Han16}. It was originally designed to treat quadratic power series in coefficients in a finite field, but actually works perfectly for any field $\F$ and in particular in our situation $\F=\Q$.

%%%%%%%%%%%%%%%%%%%%%%%%
\subsection{An algorithm for $H$-fractions}
%%%%%%%%%%%%%%%%%%%%%%%%
\label{section-algo}

Let $\F$ be a field, $q$ an indeterminate. Here we present an algorithm which is tailored to produce explicit $H$-fractions for a certain class of power series $F\in\F[[q]]$, including our particular function $\Phi_n\in\Q[[q]]$. 
 
To begin with, let us explain how any power series $F\in\F[[q]]$ can we written, in a unique way, as a $H$-fraction, i.e. a super $\de$-fraction \eqref{superfrac} with $\de=2$ (actually the same argument works for any $\de$, see the proof of Theorem~2.1 in \cite{Han16}).
 
First, one roughly expands the power series $F(q)$ as
\begin{equation*}
F(q)=v_0 q^{k_0}+O(q^{k_0+1})
\end{equation*} 
where $v_0≠0$ and $k_0\in\Z_{≥0}$, so that $F(q)/(v_0 q^{k_0})=1+O(q)$. Hence one can write
\begin{equation}\label{defD}
\fr{v_0 q^{k_0}}{F(q)}=D_0(q)-q^{k_0+2}F_{1}(q)
\end{equation}
with  $D_0$ a polynomial such that $D_0(0)=1$ and $\deg(D_0)≤k_0+1$, and $F_{1}(q)$ a power series. Notice that the condition on the degree makes $D_0$ and $F_1$ uniquely defined. Equivalently we have
\begin{equation}\label{defD2}
F(q)=\fr{v_0 q^{k_0}}{D_0(q)-q^{k_0+2}F_{1}(q)}
\end{equation}
and repeating the process with $F_1$ will give similar objects $v_1,k_1,D_1,F_2$ such that 
\begin{equation*}
F_1(q)=\fr{v_1 q^{k_1}}{D_1(q)-q^{k_1+2}F_{2}(q)}
\end{equation*}
hence
\begin{equation*}
F(q)=\fr{v_0 q^{k_0}}{D_0(q)}\cfp\fr{-v_1 q^{k_0+k_1+2}}{D_1(q)-q^{k_1+2}F_{2}(q)}.
\end{equation*}
Going on with this process inductively will lead to the Hankel continued fraction \begin{equation}\label{Hfexistence}
F(q)=\fr{v_0 q^{k_0}}{D_0(q)}\cfp\fr{-v_1 q^{k_0+k_1+2}}{D_1(q)}\cfp
\fr{-v_2 q^{k_1+k_2+2}}{D_2(q)}\cfp\cfd
\end{equation} 

Thus we have proved that any power series $F(q)$ can be developed (uniquely) as a $H$-fraction, and next problem is to compute explicitly  data sequences $(v_i)$, $(k_i)$ and $(D_i)$ from $F$. We do not know the answer in general, but when the power series is \emph{quadratic} in the sense that there exists three polynomials $A,B,C\in\F[q]$ such that
\begin{equation}\label{Fquad}
A+BF+CF^2=0
\end{equation}
(together with some conditions on $A,B,C$ that will be described soon) an algorithm will help us to do that. 
Of course, it will only be usable in  case of a finite number of steps. This situation can happen when sequences of data are finite by nature (e.g. when the field is finite; see the original applications in \cite{Han16}) or  when the super fraction is going to be \emph{periodic} (or \emph{ultimately periodic}), which will be our case when considering $F=\Phi_n$, although the field $\F=\Q$ is infinite.

Let us elaborate. Suppose from now on that $F$ satisfies the quadratic equation \eqref{Fquad} together with the following conditions
\begin{equation}
A≠0,\quad B(0)=1,\quad C≠0,\quad C(0)=0.\label{condABC}
\end{equation}
(Recall from \eqref{FE-Phin} that this is the case for $F=\Phi_n$.) Then the quadratic equation \eqref{Fquad} with conditions \eqref{condABC} has a unique solution
\begin{equation*}
F=\fr{-B+\sqrt{B^2-4AC}}{2C}.
\end{equation*}
According to \eqref{defD} we need to know the first term of the expansion of $F(q)$ about $q=0$. Because
\begin{equation*}
F(q)=\fr{-A(q)}{B(q)+C(q)F(q)}
\end{equation*}
with conditions \eqref{condABC}, we see that this amounts to know the lower degree term of $A(q)$. Writing the nonzero polynomial $A$  as
\begin{equation}\label{defak}
A(q)=a_k q^k + O(q^{k+1})
\end{equation}
with $k\in\Z_{≥0}$ and $a_k\not=0$, the  left-hand side of \eqref{defD} reads
\begin{align}
\frac{- a_k q^k} {F(q)} 
&=\frac{-2a_k q^k C}{-B+\sqrt{B^2-4AC}}\notag\\
&=\frac{-2a_k q^k C(\sqrt{B^2-4AC}+B)}{-4AC}\notag\\
&=\frac{-a_kq^kC}{A} \times \frac{-B-\sqrt{B^2-4AC}}{2C}.\label{eicu}
\end{align}
(From now on, for polynomials we will often write $P$ instead of $P(q)$, for easier reading.) Let
\begin{equation*}
\Ga(q)=\sum_{j=0}^{+\infty}\ga_j q^j=\fr{1-\sqrt{1-4q}}{2q}
\end{equation*}
be the generating function of the Catalan numbers \eqref{Catalan}. We calculate
\begin{equation*}
\Ga\left(\fr{AC}{B^2}\right)=\fr{B}{A}\times\fr{B-\sqrt{B^2-4AC}}{2C}
\end{equation*}
which implies
\begin{align*}
\frac{-B-\sqrt{B^2-4AC}}{2C}
&=-\fr{B}{C}-\frac{-B+\sqrt{B^2-4AC}}{2C}\\
&=-\fr{B}{C}+\fr{A}{B}\:\Ga\left(\fr{AC}{B^2}\right)\\
&=-\fr{B}{C}+\fr{A}{B}\sum_{j=0}^{+\infty} \gamma_j\left(\fr{AC}{B^2}\right)^j.
\end{align*}
Inserting this equality in \eqref{eicu} we deduce that
\begin{align*}
\frac{- a_k q^k} {F(q)} 
&=\frac{-a_kq^kC}{A}  
	\biggl(-\fr{B}{C} + \fr{A}{B}\sum_{j=0}^{+\infty} \gamma_j \left(\fr{AC}{B^2}\right)^j\biggr)\\
&=\frac{-a_kq^kB}{A}  
	\biggl(-1 + \sum_{j=0}^{+\infty} \gamma_j \left(\fr{AC}{B^2}\right)^{j+1}\biggr).
\end{align*}
Now, as explained in \eqref{defD} there exists  a unique polynomial $D$ with $D(0)=1$ and $\deg(D)≤k+1$, and  a unique power series $R_1$ such that 
\begin{equation*}
\fr{-a_k q^k}{F(q)}=D-q^{k+2}R_1(q)
\end{equation*}
hence
\begin{equation}\label{defD3}
\frac{-a_k q^k B}{A} \biggl(-1 + \sum_{j=0}^{+\infty} \gamma_j \left(\fr{AC}{B^2}\right)^{j+1}\biggr)
=D(q)  -  q^{k+2 }R_1(q).
\end{equation}
Recall that we want to compute explicitly the polynomial $D$ since it will produce the denominator in the first step \eqref{defD2} of the $H$-fraction expansion of $F$. To do so, remind from \eqref{condABC} that we have $B=1+O(q)$ and $C=O(q)$. Together with \eqref{defak} this implies
\begin{equation*}
\fr{AC}{B^2}=\fr{[a_k q^k+O(q^{k+1})]\cdot O(q)}{[1+O(q)]^2}=a_kO(q^{k+1})+O(q^{k+2})
\end{equation*}
so that $\left(\fr{AC}{B^2}\right)^{j+1}=O(q^{k+2})$ for all $j≥1$. In other words
the series in the left-hand side of \eqref{defD3} can contribute to $D$ only with its first term, and there exists a power series $R_2$ such that
\begin{equation*}
\frac{-a_kq^kB}{A} \left(-1 + \frac{AC}{B^2}\right)=D - q^{k+2 }R_2(q).
\end{equation*}
Writing $C(q)=c_1q+c_2 q^2+\cdots+c_d q^d$ and remembering $B=1+O(q)$, we conclude that $D$ is uniquely determined by the following conditions:
\begin{equation}\label{defD4}
\deg(D)≤k+1\quad\text{and}\quad a_k q^k\fr{B}{A}-a_k c_1 q^{k+1}=D- q^{k+2 }R_3(q)
\end{equation}
for some power series $R_3$. Property that $D(0)=1$ will be automatic, by \eqref{defD}.

All of this leads us to formulate the following definition, already introduced in \cite{Han16}, but refined here in the special case $\de=2$, i.e. when super fractions are $H$-fractions.

\begin{algo}\label{alg}{} [Algorithm \alg for $H$-fractions] 
\begin{description}
\item [Prototype] $(A^*, B^*, C^*, k, a_k, D)=\alg(A,B,C)$.
\item [Input] three polynomials $A, B, C\in \F[q]$  satisfying \eqref{condABC}.
\item [Output] a positive integer $k$,\\
a nonzero scalar $a_k\in\F$,\\
 a polynomial $D\in\F[q]$ of degree $≤k+1$ such that $D(0)=1$,\\
three polynomials $A^*, B^*, C^*\in\F[q]$ satisfying  \eqref{condABC}.  
\item [Step 1] Define $k, a_k$ by \eqref{defak}.
\item [Step 2] Define $D$ by \eqref{defD4}.
\item [Step 3] Set
\begin{align*}
{A^*}&:=\fr {1}{q^{2k+2}}\left(\fr{-D^2A}{a_k}+BD q^k-Ca_kq^{2k}\right)\\
	{B^*}&:=\fr{2AD}{a_kq^{k}} - B\\
	{C^*} &:=-\fr{Aq^{2}}{a_k}.
\end{align*}
\end{description}
\end{algo} 

In this algorithm, all objects are well (and uniquely) defined by the previous discussion.
It remains only to justify that $A^*,B^*,C^*$ are indeed polynomials and that they also  satisfy \eqref{condABC}; but this comes from Lemma~3.2 in \cite{Han16}. 
\medbreak

Now, let us check that our algorithm \alg works as expected:
\begin{itemize}
\item Start with a quadratic power series $F$ as in \eqref{Fquad}, with the associated triple of polynomials $(A,B,C)$ satisfying \eqref{condABC}. 
\item Set $(A_1,B_1,C_1,k_0,a_0,D_0):=\alg(A,B,C)$. Then we have
\begin{equation*}
F(q)=\fr{-a_0 q^{k_0}}{D_0(q)-q^{k_0+2}F_{1}(q)}
\end{equation*}
as in \eqref{defD2}. The main point now is that the power series $F_1$ is itself quadratic, more precisely it satisfies the equation
\begin{equation*}
A_1+B_1F_1+C_1F_1^2=0.
\end{equation*}
This fact is also a statement taken from Lemma~3.2 in \cite{Han16}. 
\item Thus the algorithm can be applied a second time and we set $(A_2,B_2,C_2,k_1,a_1,D_1):=\alg(A_1,B_1,C_1)$. As in the beginning of this subsection we have
\begin{equation*}
F_1(q)=\fr{-a_1 q^{k_1}}{D_1(q)-q^{k_1+2}F_{2}(q)}
\qquad\text{hence}\qquad
F(q)=\fr{-a_0 q^{k_0}}{D_0(q)}\cfp\fr{a_1 q^{k_0+k_1+2}}{D_1(q)-q^{k_1+2}F_{2}(q)}
\end{equation*}
with $F_2$ still quadratic: $A_2+B_2F_2+C_2F_2^2=0$. Going on through this  process will define $(k_j,a_j,D_j)$ for any $j≥0$ and will lead to the aimed $H$-fraction \eqref{Hfexistence}.
\item Suppose that there exists integers $0<p<p'$ such that $(A_{p'},B_{p'},C_{p'})=(A_p,B_p,C_p)$. Then the algorithm will repeat the same sequences indefinitely and the $H$-fraction is ultimately periodic:
\begin{equation}\label{Hfracalgo}
F(q)=\fr{-a_0  q^{k_0}}{D_0(q)}\cfp\CF_{j=1}^{p-1}\fr{a_j q^{k_{j-1}+k_j+2}}{D_j(q)}\cfp
\left(\CF_{j=p}^{p'-1}\fr{a_j q^{k_{j-1}+k_j+2}}{D_j(q)}\right)^*.
\end{equation} 
This is precisely what is going to happen when $F=\Phi_n$.
\end{itemize}

Of course, in theory it is not necessary to define $D$ by \eqref{defD4} rather than by \eqref{defD} to get the $H$-fraction; the original algorithm in \cite{Han16}, constructed to produce super $\de$-fractions for any $\de$, does not do that. But in our case $\de=2$, it will be much more easier to find the polynomials $D_j$ by looking  at simple expressions involving only polynomials $A_{j},B_{j},C_{j}$. 

\begin{rem} Actually, \cref{alg} can be simplified. Indeed, because of definition of $C^*$ in Step~3 we see that \emph{we will always have $C=O(q^2)$, except maybe at first implementation of the algorithm}. In other words $c_1=0$ and definition \eqref{defD4} of the polynomial $D$ in Step~2 of \cref{alg}  becomes simpler.
\\
Actually, we will encounter further simplifications when applying \cref{alg} to $F=\Phi_n$: the coefficients $a_k$ will always be $\pm 1$, as we shall see soon.
\end{rem}

%%%%%%%%%%%%%%%%%%%%%%%%
\subsection{Proof of \texorpdfstring{\cref{MainHF}}{Theorem~\ref{MainHF}}}
\label{section-pfD}
%%%%%%%%%%%%%%%%%%%%%%%%

Fix an integer $n≥3$. In this subsection we will apply \cref{alg} to $F=\Phi_n$ and prove \cref{MainHF}. As explained after the statement of \cref{alg}, our task is  to exhibit a  family of sextuplets associated with $F=\Phi_n$:
\begin{equation}\label{family}
\mathcal{Q}_j:=(A_j,B_j,C_j,k_j,a_j,D_j),\quad j≥0,
\end{equation} 
by applying repeatedly the algorithm, until periodicity is observed and stops the process. Then the $H$-fraction will have the form \eqref{Hfracalgo}.

In fact, it turns out that we will need to define two different families of objects to achieve our goal. We start with the first one.

\begin{defi}\label{defi-A_1}  
Let $\bigl(\mathcal{Q}_1^{(m)}\bigr)_{m≥1}$ denote the sequence of sextuplets
\begin{equation*}
\mathcal{Q}_1^{(m)}=\bigl(A_1^{(m)},B_1^{(m)},C_1^{(m)},k_1^{(m)},a_{k_1}^{(m)},D_1^{(m)}\bigr)
\end{equation*} 
constituted of polynomials $A_1^{(m)},B_1^{(m)},C_1^{(m)},D_1^{(m)}$, integers $k_1^{(m)}$ and numbers $a_{k_1}^{(m)}\in\{\pm 1\}$ defined  as follows:
\begin{enumerate}
\item If $m=1$: 
\begin{align*}
A_1^{(m)} &=-1,&
B_1^{(m)} &=-\frac{{\left(q^{2} - q + 1\right)} {\left(q^{n} + 1\right)} - 2 \, q}{q - 1},&
C_1^{(m)} &=q,\\
k_1^{(m)} &=0,&
a_{k_1}^{(m)} &=-1,&
D_1^{(m)} &=1-q.
\end{align*}

\item If $m=3j+1$ with $1≤j≤n-2$: 
\begin{align*}
A_1^{(m)} &=-\frac{{\left(q^{2} - q + 1\right)} {\left(q^{n-j} - q^{n} - 1\right)} + q^{j + 1}}{{\left(q - 1\right)}^{2}},&
B_1^{(m)} &=-\frac{{\left(q^{2} - q + 1\right)} {\left(q^{n} + 1\right)} - 2 \, q^{j + 1}}{q - 1},\\
C_1^{(m)} &=-q^{j + 1},&
k_1^{(m)} &=0,\\
a_{k_1}^{(m)} &=1,&
D_1^{(m)} &=1-q.
\end{align*}

\item If $m=3j+2$  with $0≤j≤n-2$: 
\begin{align*}
A_1^{(m)} &={\left(q^{2} - q + 1\right)} q^{n-j - 2},&
B_1^{(m)} &=\frac{{\left(q^{2} - q + 1\right)} {\left(2 \, q^{n-j} - q^{n} - 1\right)}}{q - 1},\\
C_1^{(m)} &=\frac{{\left(q^{4} - q^{3} + q^{2}\right)} {\left(q^{n-j} - q^{n} - 1\right)} + q^{j + 3}}{{\left(q - 1\right)}^{2}},&
k_1^{(m)} &=n-2-j,\\
a_{k_1}^{(m)} &=1,&
D_1^{(m)} &=\frac{q^{n-j} - 1}{q - 1}=[n-j]_q.
\end{align*}

\item If $m=3j+3$  with $0≤j≤n-2$: 
\begin{align*}
A_1^{(m)} &=q^{j},&
B_1^{(m)} &=\left(q^{2} - q + 1\right) [n]_q,&
C_1^{(m)} &=-{\left(q^{2} - q + 1\right)} q^{n-j},\\
k_1^{(m)} &=j,&
a_{k_1}^{(m)} &=1,&
D_1^{(m)} &=[j+2]_q-q.
\end{align*}
\end{enumerate}
\end{defi}

Using \eqref{FE-Phin} we see that  definition of $ \bigl(A_1^{(1)}, B_1^{(1)}, C_1^{(1)}\bigr)$ is such that
\begin{equation*}
A_1^{(1)}+ B_1^{(1)}\Phi_n+ C_1^{(1)}\Phi_n^2=0,
\end{equation*}
i.e. this first triple corresponds to the initialization of the process described in \cref{section-algo}. Now we prove that our algorithm \alg produces successively all the sequences defined above.

\begin{lem}\label{lemma:1}
Suppose that $1≤ m≤ 3n-3$. Then 
\begin{equation*}
\alg\bigl(A_1^{(m)}, B_1^{(m)}, C_1^{(m)}\bigr)=
\bigl(A_1^{(m+1)}, B_1^{(m+1)}, C_1^{(m+1)}, k_1^{(m)}, a_{k_1}^{(m)}, D_1^{(m)}\bigr).
\end{equation*}
\end{lem}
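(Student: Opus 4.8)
The plan is to verify directly that running \cref{alg} on the triple $\bigl(A_1^{(m)},B_1^{(m)},C_1^{(m)}\bigr)$ produces the asserted output. The algorithm is fully explicit: given the input triple, it computes $k$ and $a_k$ from the lowest-degree term of $A$ (Step~1), then $D$ from condition \eqref{defD4}, then the new triple $(A^*,B^*,C^*)$ from the formulas in Step~3. So the lemma reduces to four (one per residue class of $m\bmod 3$) polynomial identities: first, that the $k_1^{(m)}$ and $a_{k_1}^{(m)}$ claimed in \cref{defi-A_1} are indeed the valuation and leading coefficient of $A_1^{(m)}$; second, that $D_1^{(m)}$ satisfies \eqref{defD4}; and third, that the Step~3 formulas return exactly $\bigl(A_1^{(m+1)},B_1^{(m+1)},C_1^{(m+1)}\bigr)$. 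The proof is therefore essentially a large but routine verification.

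\textbf{Step 1 (valuation and leading coefficient).} For each of the four cases I would read off the lowest-degree term of $A_1^{(m)}$. For $m=1$ we have $A_1^{(1)}=-1$, so $k=0$, $a_k=-1$. For $m=3j+1$, one checks that $A_1^{(m)}$ is a power series (after clearing the $(q-1)^2$ denominator one verifies the numerator has a nonzero constant term), giving $k=0$ and $a_k=1$. For $m=3j+2$, the factor $q^{n-j-2}$ forces valuation $k=n-2-j$ with $a_k=1$. For $m=3j+3$, the factor $q^j$ gives $k=j$, $a_k=1$. These all match the stated data, so Step~1 is confirmed.

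\textbf{Step 2 ($D$) and Step 3 (new triple).} The heart of the computation is verifying that the prescribed $D_1^{(m)}$ obeys \eqref{defD4} and that Step~3 reproduces $\mathcal{Q}_1^{(m+1)}$. My approach is to work in $\F(q)=\Q(q)$ throughout, treating every quantity as an honest rational function and clearing denominators so that each assertion becomes a polynomial identity in $q$ (and in the formal parameters $n,j$, though in practice one verifies the identities as polynomial identities in $q$ with $n,j$ as symbolic exponents). For Step~2, since $c_1=0$ after the first implementation (by the Remark following \cref{alg}, as $C=O(q^2)$ except possibly at $m=1$), condition \eqref{defD4} simplifies to: $D_1^{(m)}$ is the unique truncation of $a_k q^k B_1^{(m)}/A_1^{(m)}$ to degrees $\le k+1$ with constant term $1$. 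I would expand $a_k q^k B_1^{(m)}/A_1^{(m)}$ as a power series and check its low-order part equals the claimed $D_1^{(m)}$ (namely $1-q$, $[n-j]_q$, or $[j+2]_q-q$). For Step~3 I would substitute $A=A_1^{(m)}$, $B=B_1^{(m)}$, $C=C_1^{(m)}$, $D=D_1^{(m)}$, $k=k_1^{(m)}$, $a_k=a_{k_1}^{(m)}$ into the three formulas and simplify, confirming the results coincide with $A_1^{(m+1)},B_1^{(m+1)},C_1^{(m+1)}$; here the cyclic structure of the four cases (the output of case $3j+3$ feeding into case $3(j{+}1)+1$, etc.) must be tracked carefully.

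\textbf{The hard part} will not be any single identity but rather the sheer bulk and bookkeeping: there are four cases, each requiring a $C^*$, $B^*$, and $A^*$ computation involving the messy rational expressions in \cref{defi-A_1}, and the index shift $j\mapsto j{+}1$ across the residue classes means one must confirm the transitions $m\to m+1$ land in the correct case with the correct parameter. I expect $C^*=-A_1^{(m)}q^2/a_k$ to be immediate in each case (it just multiplies $A$ by $\mp q^2$), $B^*$ to require moderate algebra, and $A^*$ to be the most delicate, since it combines $D^2A$, $BDq^k$, and $Ca_kq^{2k}$ and then divides by $q^{2k+2}$, so one must verify both that the bracketed expression is divisible by $q^{2k+2}$ and that the quotient matches the target. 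A sensible safeguard is to confirm at each step that the output triple satisfies the invariant \eqref{condABC} (namely $A^*\neq0$, $B^*(0)=1$, $C^*\neq0$, $C^*(0)=0$), which \cite{Han16} Lemma~3.2 guarantees abstractly and which serves as a consistency check on the explicit formulas. These verifications, while lengthy, are entirely mechanical and could be confirmed by a computer algebra system.
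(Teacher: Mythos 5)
Your proposal follows exactly the paper's own proof: a case-by-case run of the algorithm through the four residue classes of $m$, reading off $k$ and $a_k$ from the valuation of $A_1^{(m)}$, checking $D_1^{(m)}$ against \eqref{defD4} (with the simplification $c_1=0$ except at $m=1$), and confirming that Step~3 returns the next triple, with the transitions between residue classes tracked as you describe. The paper likewise treats the Step~3 identities as routine verifications and omits their details, so your plan is both correct and essentially identical in structure.
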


\begin{proof} Let $m$ be an integer such that $1≤ m≤ 3n-3$. To make our calculations easier to read, we just write $A$, $B$, $C$, $k$, $a_k$, $D$ instead of $A_1^{(m)}$, $B_1^{(m)}$, $C_1^{(m)}$, $k_1^{(m)}$, $a_{k_1}^{(m)}$, $D_1^{(m)}$  respectively. Let us apply the algorithm \alg for each case introduced in \cref{defi-A_1}.
\begin{enumerate}[wide]
\item Case  $m=1$.
\begin{description}[leftmargin=0pt]
\item [Step 1] Since $ A = -1$, \eqref{defak} gives $ k = 0$ and $ a_k  = -1$.
\item [Step 2] Since $ C = q $, we have  $c_1=1$ and \eqref{defD4} says that the polynomial $D$ is defined by
\begin{equation*}
q - \frac{{\left(q^{2} - q + 1\right)} {\left(q^{n} + 1\right)} - 2 \, q}{q - 1}
= 1- q^{n+1} - \frac{ q^{n} -q }{q - 1} =D + O(  q^{2 })
\end{equation*}
and that $\deg(D)≤1$. We see immediately that $ D = 1-q $.
\item [Step 3] Applying the definitions, we easily check that $ A^{*} =  A^{(2)}$, $B^{*} =  B^{(2)}$, $C^{*} =  C^{(2)}$.
\end{description}

\item Case  $m=3j+1$ with $1≤j≤n-2$.
\begin{description}[leftmargin=0pt]
\item [Step 1] Here,
\begin{equation*}
A = -\frac{{\left(q^{2} - q + 1\right)} {\left(q^{n-j} - q^{n} - 1\right)} + q^{j + 1}}{{\left(q - 1\right)}^{2}}
=1  + O(q)
\end{equation*}
so that $ k = 0$ and $ a_k  = 1$.

\item [Step 2] Since $ C = -q^{j + 1} $,  we have $c_1=0$ and by \eqref{defD4},
$D$ is the unique polynomial of degree $≤1$ such that
\begin{equation*}
\fr{B}{A}=D +O(q^{2 }).
\end{equation*}
But
\begin{equation*}
\fr{B}{A}=\frac{{(\left(q^{2} - q + 1) (q^{n} + 1) - 2 \, q^{j + 1}\right)} {(q - 1)}}{{(q^{2} - q + 1)} {(q^{n-j} - q^{n} - 1)} + q^{j + 1}}=\frac{-1+2q+O(q^2)}{-1+q+O(q^2)}
\end{equation*}
because $1≤j≤n-2$, hence $ D = 1-q$.  
\item [Step 3] We obtain easily $ A^{*} =  A^{(m+1)},\  B^{*} =  B^{(m+1)},\  C^{*} =  C^{(m+1)}$.
\end{description}

\item Case $m=3j+2$  with $0≤j≤n-2$.
\begin{description}[leftmargin=0pt]
\item [Step 1]
Since 
$ A = {\left(q^{2} - q + 1\right)} q^{n-j - 2} = q^{n-j - 2} + O(q^{n-j - 1})$
we have $ k = n-j - 2$ and $a_k  = 1$.
\item [Step 2]
Since
\begin{equation*}
C = \frac{{\left(q^{4} - q^{3} + q^{2}\right)} {\left(q^{n-j} - q^{n} - 1\right)} + q^{j + 3}}{{\left(q - 1\right)}^{2}},
\end{equation*}
we have $c_1=0$. By \eqref{defD4}, $D$ must satisfy $\deg(D)≤n-j-1$ and
\begin{equation*}
\frac{2 \, q^{n-j} - q^{n} - 1}{q - 1}=D + O(  q^{n-j }).
\end{equation*}
Splitting the left-hand side as $\frac{ q^{n-j} - q^{n}}{q - 1}+\frac{q^{n-j} - 1}{q - 1}$, we deduce that 
\begin{equation*}
D = \frac{q^{n-j} - 1}{q - 1}=[n-j]_q.
\end{equation*}
\item [Step 3] With the definitions, we check that 
$ A^{*} =  A^{(m+1)},\  B^{*} =  B^{(m+1)},\  C^{*} =  C^{(m+1)} $.
\end{description}
\item Case $m=3j+3$  with $0≤j≤n-2$. 
\begin{description}[leftmargin=0pt]
\item [Step 1]
In this case $ A = q^{j } $, so that
$ k = j$ and $a_k  = 1$.
\item [Step 2]  Since
$
C = -{\left(q^{2} - q + 1\right)} q^{n-j}
$,
we have  $c_1=0$. By \eqref{defD4}, $D$ is a polynomial of degree $≤j+1$ which satisfies
\begin{equation*}
\left(q^{2} - q + 1\right) [n]_q=D + O(  q^{j + 2 }),
\end{equation*}
that is to say
\begin{equation*}
1+q^2+q^3+\cdots+q^{n-1}+q^{n+1}=D + O(  q^{j + 2 }).
\end{equation*}
Condition $j+1≤n-1$ implies that
\begin{equation*}
D = 1+q^2+q^3+\cdots+q^{j+1}=-q+[j+2]_q.
\end{equation*}
\item [Step 3] As in the previous cases, we easily find that 
$ A^{*} =  A^{(m+1)},\  B^{*} =  B^{(m+1)},\  C^{*} =  C^{(m+1)} $,
where $m+1 =3j+1$ with $1≤j≤n-1$. 
\end{description}
\end{enumerate}
Thus the lemma is proved.
\end{proof}

\cref{lemma:1} gives us the list of the first $3n-2$ sextuplets $\mathcal{Q}_j$ in \eqref{family} that are needed to write the $H$-fraction of $\Phi_n$. However, a simple calculation shows that the result of 
$$\alg\bigl(A_1^{(3n-2)}, B_1^{(3n-2)}, C_1^{(3n-2)}\bigr)$$
is not expressible in terms of the  data that were defined \cref{defi-A_1}, and we have to define another family of data to proceed calculations.

\begin{defi}\label{defi-A_2}  
Let $\bigl(\mathcal{Q}_2^{(m)}\bigr)_{m≥1}$ denote the sequence of sextuplets
\begin{equation*}
\mathcal{Q}_2^{(m)}=\bigl(A_2^{(m)},B_2^{(m)},C_2^{(m)},k_2^{(m)},a_{k_2}^{(m)},D_2^{(m)}\bigr)
\end{equation*} 
constituted of polynomials $A_2^{(m)},B_2^{(m)},C_2^{(m)},D_2^{(m)}$, integers $k_2^{(m)}$ and numbers $a_{k_2}^{(m)}\in\{\pm 1\}$ defined  as follows:
\begin{enumerate}
\item If $m=1$: 
\begin{align*}
A_2^{(m)} &=1+q^2[n-1]_q,&
B_2^{(m)} &=\cron-2q^n,&
C_2^{(m)} &=-q^{n},\\
k_2^{(m)} &=0,&
a_{k_2}^{(m)} &=1,&
D_2^{(m)} &=1.
\end{align*}

\item If $m=4$: 
\begin{align*}
A_2^{(m)} &=q^{n - 1},&
B_2^{(m)} &=\cron,&
C_2^{(m)} &=-q^{n + 2},\\
k_2^{(m)} &=n - 1,&
a_{k_2}^{(m)} &=1,&
D_2^{(m)} &=\cron+q^{n+1}.
\end{align*}

\item If $m=3j+1$ with $2≤j≤n$: 
\begin{align*}
A_2^{(m)} &={\left(q^{2} - q + 1\right)} q^{j - 2},&
B_2^{(m)} &=(q^{2} - q + 1)[n]_q,&
C_2^{(m)} &=-q^{n-j+ 2},\\
k_2^{(m)} &=j - 2,&
a_{k_2}^{(m)} &=1,&
D_2^{(m)} &=[j]_q.
\end{align*}

\item If $m=2$: 
\begin{align*}
A_2^{(m)} &=-q^{n - 1},&
B_2^{(m)} &=\cron+2q^{n+1},&
C_2^{(m)} &=-q^2-q^4[n-1]_q,&\\
k_2^{(m)} &=n - 1,&
a_{k_2}^{(m)} &=-1,&
D_2^{(m)} &=\cron+q^{n+1}.
\end{align*}

\item If $m=5$: 
\begin{align*}
A_2^{(m)} &=-1-q^2[n-1]_q,&
B_2^{(m)} &=\cron+2q^{n+1},&
C_2^{(m)} &=-q^{n + 1},\\
k_2^{(m)} &=0,&
a_{k_2}^{(m)} &=-1,&
D_2^{(m)} &=1.
\end{align*}

\item If $m=3j+2$ with  $2≤j≤n-1$: 
\begin{align*}
A_2^{(m)} &=-\frac{{\left(q^{2} - q + 1\right)} {\left(q^{j} - q^{n} - 1\right)} + q^{n-j+ 1}}{{\left(q - 1\right)}^{2}},&
B_2^{(m)} &=\frac{{\left(q^{2} - q + 1\right)} {\left(2 \, q^{j} - q^{n} - 1\right)}}{q - 1},\\
C_2^{(m)} &=-{\left(q^{2} - q + 1\right)} q^{j},&
k_2^{(m)} &=0,\\
a_{k_2}^{(m)} &=1,&
D_2^{(m)} &=-q + 1.
\end{align*}

\item If $m=3$: 
\begin{align*}
A_2^{(m)} &=q^{n},&
B_2^{(m)} &=\cron,&
C_2^{(m)} &=-q^{n + 1},\\
k_2^{(m)} &=n,&
a_{k_2}^{(m)} &=1,&
D_2^{(m)} &=\cron.
\end{align*}

\item If $m=3j+3$  with $1≤j≤n-1$: 
\begin{align*}
A_2^{(m)} &=q^{n-j- 1},&
B_2^{(m)} &=-\frac{{\left(q^{2} - q + 1\right)} {\left(q^{n} + 1\right)} - 2 \, q^{n-j+ 1}}{q - 1},\\
C_2^{(m)} &=\frac{{\left({\left(q^{2} - q + 1\right)} {\left(q^{j} - q^{n} - 1\right)} + q^{n-j+ 1}\right)} q^{2}}{{\left(q - 1\right)}^{2}},&
k_2^{(m)} &=n-1-j,\\
a_{k_2}^{(m)} &=1,&
D_2^{(m)} &=[n-j+1]_q-q.
\end{align*}
\end{enumerate}
\end{defi}

Notice that 
\begin{equation}\label{jonction}
\bigl(A_2^{(1)}, B_2^{(1)}, C_2^{(1)}\bigr)=
\bigl(A_1^{(3n-2)}, B_1^{(3n-2)}, C_1^{(3n-2)}\bigr).
\end{equation}
Thereby the second family $\bigl(\mathcal{Q}_2^{(m)}\bigr)_{m≥1}$ is  the natural continuation of the first one $\bigl(\mathcal{Q}_1^{(m)}\bigr)_{m=1}^{3n-2}$.

\begin{lem}\label{lemma:2}
Suppose that $1≤ m≤ 3n+1$. Then 
\begin{equation*}
\alg\bigl(A_2^{(m)}, B_2^{(m)}, C_2^{(m)}\bigr)=
\bigl(A_2^{(m+1)}, B_2^{(m+1)}, C_2^{(m+1)}, k_2^{(m)}, a_{k_2}^{(m)}, D_2^{(m)}\bigr).
\end{equation*}
\end{lem}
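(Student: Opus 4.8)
The plan is to follow verbatim the method of \cref{lemma:1}: for each of the eight cases of \cref{defi-A_2} I run the three steps of \cref{alg} on the triple $\bigl(A_2^{(m)},B_2^{(m)},C_2^{(m)}\bigr)$ and check that the output is $\bigl(A_2^{(m+1)},B_2^{(m+1)},C_2^{(m+1)},k_2^{(m)},a_{k_2}^{(m)},D_2^{(m)}\bigr)$. A preliminary bookkeeping step is needed, because here the cases are not listed in increasing order of $m$: I first record, for each starting index $m$, which case the successor $m+1$ belongs to. The exceptional small indices chain as $1\mapsto 2\mapsto 3\mapsto 4\mapsto 5\mapsto 6$, after which the three arithmetic progressions settle into the stable cycle $m\equiv0\mapsto m\equiv1\mapsto m\equiv2\pmod 3$ (cases $8\to 3\to 6$); a quick look at the monomial $C^{*}=-Aq^{2}/a_k$ in each case already confirms that these transitions land on the correct $C_2^{(m+1)}$.

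Steps~1 and~2 are light in every case. Each $A_2^{(m)}$ is, up to a unit, either a monomial or $(q^{2}-q+1)$ times a monomial, so its lowest-degree term is read off at sight and yields $k=k_2^{(m)}$ and $a_k=a_{k_2}^{(m)}\in\{\pm1\}$. Moreover $c_1=0$ throughout (all $C_2^{(m)}$ are $O(q^{2})$, since $n\ge3$), so by the Remark after \cref{alg} the defining relation \eqref{defD4} reduces to: $D_2^{(m)}$ is the truncation of $q^{k}B_2^{(m)}/A_2^{(m)}$ to degree $\le k+1$. Thus I only need $B_2^{(m)}/A_2^{(m)}$ modulo $q^{k+2}$, and using $[m]_q=(1-q^{m})/(1-q)$ together with the explicit form $\cron=1+q^{2}+\cdots+q^{n-1}+2q^{n}-q^{n+1}$ from \eqref{defnq}, each short polynomial $D_2^{(m)}$ (namely $1$, $1-q$, $[j]_q$, $[n-j+1]_q-q$, $\cron$, or $\cron+q^{n+1}$) drops out after one line; for instance in case $m=4$ one simply truncates $\cron$ below degree $n$, recovering $\cron+q^{n+1}$.

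The real work is Step~3, where one substitutes into
\begin{equation*}
A^{*}=\frac{1}{q^{2k+2}}\Bigl(\tfrac{-D^{2}A}{a_k}+BDq^{k}-Ca_kq^{2k}\Bigr),\qquad
B^{*}=\frac{2AD}{a_kq^{k}}-B,\qquad
C^{*}=-\frac{Aq^{2}}{a_k},
\end{equation*}
and checks that these equal $A_2^{(m+1)},B_2^{(m+1)},C_2^{(m+1)}$. The identity for $C^{*}$ is immediate and serves as a consistency check; $B^{*}$ and especially $A^{*}$ are where the labor lies. My strategy is to clear all denominators (the recurring factors $(q-1)$ and $(q-1)^{2}$) and reduce each side to an honest polynomial in $q$, $q^{n}$ and the relevant monomials, after which the claimed equalities become coefficient-wise identities. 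I expect the heaviest cases to be the exceptional small indices $m\in\{1,2,3,4,5\}$, where $\cron$ and $\cron+q^{n+1}$ occur simultaneously in $B$ and $D$ and must be expanded and recombined via \eqref{defnq} (e.g.\ in case $m=1$ the equality $B^{*}=\cron+2q^{n+1}$ amounts to $1+q^{2}[n-1]_q+q^{n}-q^{n+1}=\cron$); the progression cases $m=3j+1,3j+2,3j+3$ are instead governed by products of $q^{2}-q+1$ with monomials over $(q-1)^{2}$ and are routine.

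Finally, the boundary value $m=3n+1$ is the period-closing junction and deserves separate attention: here $\bigl(A_2^{(3n+1)},\dots,D_2^{(3n+1)}\bigr)$ (the case $m=3j+1$ with $j=n$) coincides term by term with $\bigl(A_1^{(2)},\dots,D_1^{(2)}\bigr)$, so \cref{alg} returns $\mathcal{Q}_1^{(3)}$ and the $H$-fraction wraps around, which is exactly the source of the $(6n-4)$-periodicity asserted in \cref{MainHF}. Throughout, I do not have to verify that each output triple again satisfies \eqref{condABC}, nor that the corresponding power series stays quadratic, since both are guaranteed a priori by Lemma~3.2 of \cite{Han16}; this is what legitimizes iterating \cref{alg} and, combined with the junction \eqref{jonction}, splices the two families $\mathcal{Q}_1$ and $\mathcal{Q}_2$ into a single periodic run.
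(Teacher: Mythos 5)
Your proposal is correct and takes exactly the approach of the paper, whose own proof of this lemma is entirely omitted: it merely states that one applies \alg to each case of \cref{defi-A_2} just as in \cref{lemma:1}, the calculations being "of the same kind, lengthy but not difficult." One small caution on your Step~2 shorthand: by \eqref{defD4} the quantity to truncate is $a_kq^{k}B/A$ rather than $q^{k}B/A$, and the factor $a_k$ genuinely matters in the two cases $m=2$ and $m=5$ where $a_{k_2}^{(m)}=-1$ (likewise, a few of the $A_2^{(m)}$, e.g.\ for $m=1,5$ and $m=3j+2$, are not monomials times $q^2-q+1$, though their lowest-degree term is still read off at sight, which is all that is needed).
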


\begin{proof} As for  \cref{lemma:1}, the proof consists in applying the algorithm \alg for each case introduced in \cref{defi-A_2}. Calculations as exactly of the same kind, lengthy but not difficult, and we omit details here.
\end{proof} 

Notice that \cref{lemma:2} is not valid any more for next index $m=3n+2$, i.e.
$$\alg\bigl(A_2^{(3n+2)}, B_2^{(3n+2)}, C_2^{(3n+2)}\bigr)$$
will not give a sextuplet consisting of elements defined in \cref{defi-A_2}. Nevertheless, we make the following crucial observation: 
\begin{align*}
A_2^{(3n+1)} &=(q^{2} - q + 1) q^{n - 2}=A_1^{(2)},\\
B_2^{(3n+1)} &=(q^{2} - q + 1)[n]_q=B_1^{(2)},\\
C_2^{(3n+1)} &=-q^{2}=C_1^{(2)},\\
k_2^{(3n+1)} &=n - 2=k_1^{(2)},\\
a_{k_2}^{(3n+1)} &=1=a_{k_1}^{(2)},\\
D_2^{(3n+1)} &=[n]_q=D_1^{(2)},
\end{align*}
and therefore
\begin{equation*}
\alg\bigl(A_2^{(3n+1)}, B_2^{(3n+1)}, C_2^{(3n+1)}\bigr)
=\bigl(A_1^{(3)}, B_1^{(3)}, C_1^{(3)}, k_1^{(2)}, a_{k_1}^{(2)}, D_1^{(2)}\bigr).
\end{equation*}
In other words we came back to the first family $\bigl(\mathcal{Q}_1^{(m)}\bigr)$ of sextuplets and \emph{this proves the periodicity of the $H$-fraction expansion of $\Phi_n$}. Indeed, we just proved that the family \eqref{family} $(\mathcal{Q}_j)_{j≥0}$ of sextuplets associated with $\Phi_n$ is completely determined by the following equalities:
\begin{align}
&\text{first terms:}\quad\mathcal{Q}_j=
\begin{dcases*}
\mathcal{Q}_1^{(j+1)}&if $0≤j≤3n-4$,\\
\mathcal{Q}_2^{(j-3n+4)}&if $3n-3≤j≤6n-4,$
\end{dcases*}\label{defQj}\\
&\text{periodicity:}\quad\mathcal{Q}_{j+6n-4}=\mathcal{Q}_j\quad\text{for }j≥1.\notag
\end{align}

We summarize the results of this section in \cref{table-alg} below. Extracting the data sequence $(k_j,a_j,D_j)$ from that table and applying \eqref{Hfracalgo} (with $p=1$ and $p'=6n-3$) gives exactly \eqref{Hfrac-Phin}, hence \cref{MainHF} is proved. In the table, the first line $j=0$ corresponds to the head of  the $H$-fraction (first term) and the periodic part consists of the block of lines from $j=1$ to $j=6n-4$.
\medbreak
\begin{table}[t]
\caption{Results of the algorithm \alg applied to $\Phi_n$.}\label{table-alg}
{%
\newcolumntype{C}{>{$}c<{$}} % 
\renewcommand{\arraystretch}{1.4}
\begin{center}
\begin{tabular}{ |C|C|C|C|C|C|} 
\hline
j & \mathcal{Q}_j & k_j& a_j & D_j&\text{Contribution} \\
\hline
0 & \mathcal{Q}_1^{(1)} & 0& -1 & 1-q &\text{Head}\\
\hline
1 & \mathcal{Q}_1^{(2)} & n - 2& 1 & [n]_q& U_n(q)  \\
2 & \mathcal{Q}_1^{(3)} & 0& 1 & 1& \\
3 & \mathcal{Q}_1^{(4)} & 0& 1 & 1-q &\\
\vdots & \vdots  &\vdots & \vdots &\vdots &\\
3n-5 & \mathcal{Q}_1^{(3n-4)}& 0& 1 & 1+q& \\
3n - 4 & \mathcal{Q}_1^{(3n-3)}  & n - 2& 1 & [n]_q-q& \\
3n-3 & \mathcal{Q}_1^{(3n-2)}=\mathcal{Q}_2^{(1)} & 0& 1 & 1 &\\
\hline
3n-2 & \mathcal{Q}_2^{(2)} & n - 1& -1 & \cron+q^{n+1} & V_n(q) \\
3n-1 & \mathcal{Q}_2^{(3)} & n& 1 & \cron& \\
3n & \mathcal{Q}_2^{(4)} & n - 1& 1 & \cron+q^{n+1}& \\
3n+1 & \mathcal{Q}_2^{(5)} & 0& -1 & 1 &\\
\hline
3n+2 & \mathcal{Q}_2^{(6)} & n - 2& 1 & [n]_q-q & W_n(q)\\
3n+3 & \mathcal{Q}_2^{(7)} & 0& 1 & 1+q & \\
3n+4 & \mathcal{Q}_2^{(8)} & 0& 1 & 1-q & \\
\vdots & \vdots  &\vdots & \vdots & \vdots &\\
6n-7 & \mathcal{Q}_2^{(3n-3)} & 1& 1 & 1+q^{2}& \\
6n-6 & \mathcal{Q}_2^{(3n-2)} & n - 3& 1 & [n-1]_q& \\
6 n-5 & \mathcal{Q}_2^{(3n-1)} & 0& 1 & 1-q & \\
6n-4 & \mathcal{Q}_2^{(3n)} & 0& 1 & 1 &\\
\hline
6n-3 & \mathcal{Q}_2^{(3n+1)}=\mathcal{Q}_1^{(2)} & n - 2& 1 & [n]_q &\text{2nd period} \\
\vdots & \vdots  &\vdots &\vdots  &\vdots & \\
\hline
\end{tabular}
\end{center}
}% end \newcolumntype
\end{table}

%%%%%%%%%%%%%%%%%%%%%%%%
\subsection{A remark on symmetries}\label{section-symmetries}
%%%%%%%%%%%%%%%%%%%%%%%%

Formula \eqref{Hfrac-Phin} reveals three types of symmetries \emph{inside} the period of the continued fraction. Let $(\al_i)$ and $(\be_i)$ denote the sequences of numerators and denominators of the continued fraction \eqref{Hfrac-Phin}, respectively, i.e. write
\begin{equation*}
\Phi_n(q)=\CF_{i=0}^{+\infty}\fr{\al_i}{\be_i}=\fr{\al_0}{\be_0}\cfp\left(\CF_{i=1}^{6n-4}\fr{\al_i}{\be_i}\cfp\right)^*.
\end{equation*}
\begin{enumerate}
\item Symmetries inside the whole period (and, actually, a bit larger block):
\begin{align*}
&\al_i=\al_{6n-1-i},\quad i=1,2,\ldots,6n-2.\\
&\be_i=\be_{6n-2-i},\quad i=1,2,\ldots,6n-3.
\end{align*}

\item Symmetries inside the first block $U_n(q)$:
\begin{align*}
&\al_i=\al_{3n-2-i},\quad i=1,2,\ldots,3n-3.\\
&\be_0=\be_{3n-3}-q\quad \text{and}\quad \be_i=\be_{3n-3-i}+\chi_i q,\quad i=1,2,\ldots,3n-4,\\
&\qquad\text{ where }\chi_i=
\begin{dcases*}
0&if $i\equiv 0\mod {3}$\\
1&if $i\equiv 1\mod {3}$\\
-1&if $i\equiv -1\mod {3}$.
\end{dcases*}
\end{align*}

Symmetries of type (1) and (2) imply also similar symmetries inside the block $W_n(q)$.
\item `Pseudo half-period' for numerators inside the whole period:
\begin{equation*}
\al_i=\al_{i+3n+1},\quad i=1,\ldots,3n-3.
\end{equation*}

These identities reflect  that the sequence of numerators in block $W_n(q)$ is essentially the same as in block $U_n(q)$.
\end{enumerate}

\begin{ex}\label{ex-Phi5} Let us write the $H$-fraction and illustrate its symmetries in the case $n=5$.
We have
\begin{equation}\label{Hfrac-Phi5}
\Phi_5(q)=\frac{1}{-q+1}\cfp\left(U_5(q)\cfp V_5(q)\cfp W_5(q)\cfp\right)^*
\end{equation}
with
\begin{align*}
	U_5(q)=\CF_{i=1}^{12}\fr{\al_i}{\be_i}=&
	\fr{q^5}{ [5]_q }\cfp
\fr{q^5}{1}\cfp
\fr{q^2}{1-q }\cfp
	\fr{q^4}{[4]_q}\cfp
\fr{q^5}{q^2 + 1}\cfp
\fr{q^3}{1-q }\cfp
\\
	&\quad\fr{q^3}{[3]_q }\cfp
\fr{q^5}{ [4]_q - q   }\cfp
\fr{q^4}{1-q }\cfp
	\fr{q^2}{[2]_q}\cfp
\fr{q^5}{ [5]_q - q }\cfp
\fr{q^5}{1},
\end{align*}
\begin{equation*}
	V_5(q)=\CF_{i=13}^{16}\fr{\al_i}{\be_i}=
\fr{-q^6}{  \langle 5 \rangle_q  +q^6 }\cfp
\fr{q^{11}}{ \langle 5 \rangle_q }\cfp
\fr{q^{11}}{  \langle 5 \rangle_q  +q^6  }\cfp
\fr{-q^6}{1}, 
\end{equation*}
and
\begin{align*}
	W_5(q)=\CF_{i=17}^{26}\fr{\al_i}{\be_i}=&
	\fr{q^5}{[5]_q - q }\cfp
	\fr{q^5}{[2]_q }\cfp
\fr{q^2}{1-q }\cfp
\fr{q^4}{ [4]_q - q  }\cfp
	\fr{q^5}{[3]_q }\cfp
\fr{q^3}{1-q }\cfp
\\
&\quad\fr{q^3}{q^2 + 1}\cfp
	\fr{q^5}{[4]_q }\cfp
\fr{q^4}{1-q }\cfp
\fr{q^2}{1}. 
\end{align*}
We can check  symmetries of type (1):
\begin{align*}
&\al_i=\al_{29-i},\quad i=1,2,\ldots,28.\\
&\be_i=\be_{28-i},\quad i=1,2,\ldots,27.
\end{align*}
as well as  symmetries of type (2):
\begin{align*}
&\al_i=\al_{13-i},\quad i=1,\ldots,12.\\
&\be_0=\be_{12}-q\quad \text{and}\quad \be_i=\be_{12-i}+\chi_i q,\quad i=1,\ldots,11,\\
\end{align*}
and of type (3):
\begin{equation*}
\al_i=\al_{i+16},\quad i=1,\ldots,12.
\end{equation*}
\end{ex}

%%%%%%%%%%%%%%%%%%%%%%%%%
%%%%%%%%%%%%%%%%%%%%%%%%%
\section{Special values and (anti-)periodicity property}\label{section-period}
%%%%%%%%%%%%%%%%%%%%%%%%%
%%%%%%%%%%%%%%%%%%%%%%%%%

As mentioned in the introduction, the statement of \cref{MainValPer} for $1≤\ell≤n+1$ obviously follows from \cref{MainContiguity} and the case $\ell=0$. This section is devoted to the proof of the latter and we still assume that $n≥3$.

%%%%%%%%%%%%%%%%%%%%%%%%
\subsection{Collecting some data}
%%%%%%%%%%%%%%%%%%%%%%%%

In this subsection we give the explicit values of  the family of parameters $(s_j)$ and $(\eps_j)$ introduced in \eqref{s-def} and \eqref{e-def}, because they are needed to compute the Hankel determinants $\De_j(\Phi_n)$ with formula \eqref{HankelHF}. In particular, we will emphasize on some symmetric patterns they form, because these will simplify further calculations.  

First, we know from the results in \cref{section-pfD} that the sequence $(k_i)$ is $(6n-4)$-periodic and determined by the following first values: 
\begin{equation}\label{value-kj}
\left\{
\begin{array}{llll}
k_0=0, &&&
\\
k_{3i+1}=n-2-i,&
k_{3i+2}=i,&
k_{3i+3}=0,&
\text{for }i=0,\ldots,n-2
\\
k_{3n-2}=n-1,&
k_{3n-1}=n,&
k_{3n}=n-1,&
\\
k_{3i+1}=0,&
k_{3i+2}=2n-2-i,&
k_{3i+3}=i-n,&
\text{for }i=n,\ldots,2n-3
\\
k_{6n-5}=0.&&&
\end{array}
\right.
\end{equation}
Hence we have:

\begin{lem} The first period of the sequence $(k_i)$ admits the following symmetries:
\begin{enumerate}
\item Symmetry inside the whole period $\{k_0,\ldots,k_{6n-3}\}$:
\begin{equation}
\label{symper-k}
k_i=k_{6n-2-i}\quad\text{for }i=0,\ldots,6n-2.
\end{equation}
\item One-to-one correspondence between subsets $\{k_0,\ldots,k_{3n-3}\}$ and $\{k_{3n+1},\ldots,k_{6n-2}\}$ of `non-central' values:
\begin{equation}
\label{bij-k}
k_{i+3n+1}=k_i\quad\text{for }i=0,\ldots,3n-3.
\end{equation}
\item Symmetries inside the subsets $\{k_0,\ldots,k_{3n-3}\}$ and $\{k_{3n+1},\ldots,k_{6n-2}\}$:
\begin{align}
&k_i=k_{3n-3-i}\quad\text{for }i=0,\ldots,3n-3\label{symdemiper-k}\\
&k_{3n+1+i}=k_{6n-2-i}\quad\text{for }i=0,\ldots,3n-3.\notag
\end{align}
\end{enumerate}
\end{lem}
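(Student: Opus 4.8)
The plan is to verify all three symmetries directly from the explicit table of values \eqref{value-kj}, exploiting the fact that this sequence is governed by a simple rule depending only on the residue of the index modulo~$3$: the first-range rule on the block $\{0,\ldots,3n-3\}$, the three central values $k_{3n-2},k_{3n-1},k_{3n}$, and the second-range rule on the block $\{3n+1,\ldots,6n-5\}$. Rather than checking the displayed identities independently, I would isolate the two \emph{fundamental} ones---the half-period reflection \eqref{symdemiper-k} and the translation \eqref{bij-k}---and then deduce the remaining assertions formally.

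First I would establish the reflection $k_i=k_{3n-3-i}$ for $0\le i\le 3n-3$. Writing $i=3j+r$ according to its residue $r\in\{0,1,2\}$, the map $i\mapsto 3n-3-i$ interchanges the classes $r=1$ and $r=2$ while fixing $r=0$; for instance for $i=3j+1$ one computes $3n-3-i=3(n-2-j)+2$, and \eqref{value-kj} gives $k_{3j+1}=n-2-j=k_{3(n-2-j)+2}$. The three residue classes thus reduce to three short checks, with the endpoints $k_0=k_{3n-3}=0$ handled separately. Next I would prove the translation \eqref{bij-k}, $k_{i+3n+1}=k_i$ for $0\le i\le 3n-3$: here adding $3n+1$ raises the residue by one ($3j+r\mapsto 3(j+n)+(r+1)$) and carries the first-range rule into the second-range rule, so again three residue checks suffice---e.g.\ $k_{3j+2}=j$ while $k_{(3j+2)+3n+1}=k_{3(n+j)+3}=(n+j)-n=j$. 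The only subtlety is at the top of the block, where $i+3n+1$ exceeds $6n-5$ and one must invoke the period relation $k_{j+6n-4}=k_j$ to return to the tabulated range.

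With \eqref{symdemiper-k} and \eqref{bij-k} in hand, the second identity of part~(3) is immediate: for $0\le i\le 3n-3$,
\begin{equation*}
k_{3n+1+i}\overset{\eqref{bij-k}}{=}k_i\overset{\eqref{symdemiper-k}}{=}k_{3n-3-i}\overset{\eqref{bij-k}}{=}k_{(3n-3-i)+3n+1}=k_{6n-2-i},
\end{equation*}
the last application of \eqref{bij-k} being legitimate since $3n-3-i$ again runs through $\{0,\ldots,3n-3\}$. Finally I would deduce the whole-period symmetry \eqref{symper-k}. Since $i\mapsto 6n-2-i$ is an involution on $\{0,\ldots,6n-2\}$ with fixed point $i=3n-1$, it suffices to treat $0\le i\le 3n-1$: for $0\le i\le 3n-3$ the second identity of part~(3) combined with \eqref{bij-k} gives $k_{6n-2-i}=k_{3n+1+i}=k_i$, and the two leftover cases are checked directly from \eqref{value-kj} as $k_{3n-2}=n-1=k_{3n}$ together with the trivial fixed point $k_{3n-1}$.

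I expect no genuine difficulty here: the entire argument is a finite case analysis organized by residues modulo~$3$. The one real bookkeeping hazard---and the step I would be most careful about---is tracking the block boundaries, where a shifted index leaves the explicitly tabulated range $\{0,\ldots,6n-5\}$ and the periodicity $k_{j+6n-4}=k_j$ must be used to bring it back; getting these endpoint reductions right is what makes the translation \eqref{bij-k} the linchpin of the whole proof.
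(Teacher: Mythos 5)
Your proposal is correct and takes essentially the same route as the paper, which offers no written proof at all: the lemma is stated there as an immediate consequence of the explicit values \eqref{value-kj}, and your residue-mod-$3$ case analysis (with the periodicity $k_{j+6n-4}=k_j$ handling the indices that leave the tabulated range) is exactly the verification being left to the reader. Organizing it so that \eqref{symdemiper-k} and \eqref{bij-k} are checked directly and the other two identities are deduced formally is a sensible economy but not a different method.
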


\begin{ex} For a better understanding, let us illustrate the symmetries when $n=5$. Here $3n-3=12$, $3n+1=16$, $6n-2=28$ and the $k_i$'s are given by the array:
\begin{align*}
&\begin{array}{l|*{13}{wc{1em}}}
i & 0 & 1 & 2 & 3 & 4 & 5 & 6 & 7 & 8 & 9 & 10 & 11 & 12\\ 
\hline
k_i & 0 & 3 & 0 & 0 & 2 & 1 & 0 & 1 & 2 & 0 & 0 & 3 & 0 \\ 
\end{array}
\\
&\begin{array}{l|*{3}{wc{1em}}}
i & 13 & 14 & 15 \\ 
\hline
k_i & 4 & 5 & 4 \\ 
\end{array}
\\
&\begin{array}{l|*{13}{wc{1em}}}
i & 16 & 17 & 18 & 19 & 20 & 21 & 22 & 23 & 24 & 25 & 26 & 27 & 28\\ 
\hline
k_i & 0 & 3 & 0 & 0 & 2 & 1 & 0 & 1 & 2 & 0 & 0 & 3 & 0 \\ 
\end{array}
\end{align*}
Thus we have 
\begin{align*}
	k_i&=k_{28-i},\quad i=0,\ldots,14; &\qquad k_{i+16}&=k_{i},\quad i=0,\ldots,12, \\
	k_i&=k_{12-i},\quad i=0,\ldots,6; &\qquad k_{16+i}&=k_{28-i},\quad i=0,\ldots,6.
\end{align*}
\end{ex}

Recall from \eqref{HankelHF} that the numbers $s_j\in\cS$ enumerate the nonzero Hankel determinants of $\Phi_n$. We determine now their values:
\begin{lem} \label{value-sj}
\begin{enumerate}
\item The values of the parameter $s_j$, for $0≤j≤6n-4$, are given by
\begin{align*}
s_{3p}&=
\begin{dcases*}
p(n+1),& if $0≤p≤n-1$,\\
(n+1)^2,& if $p=n$,\\
1+(p+3)n,& if $n+1≤p≤2n-2$;
\end{dcases*}
\\
s_{3p+1}&=
\begin{dcases*}
1+p(n+1),& if $0≤p≤n-1$,\\
n+(p+1)(n+1),& if $n≤p≤2n-2$;
\end{dcases*}
\\
s_{3p+2}&=
\begin{dcases*}
(p+1)n,& if $0≤p≤n-2$,\\
n(n+1),& if $p=n-1$,\\
(p+2)(n+1),& if $n≤p≤2n-2$.
\end{dcases*}
\end{align*}
\item We have the following relations:
\begin{align}
&s_{j+3n+1}=s_j+s_{3n+1}=s_j+n+(n+1)^2\quad\text{for }0≤j≤3n-2,\label{sym1-s}\\
&s_j+s_{6n-1-j}=s_{6n-1}=(2n+1)(n+1)\quad\text{for }0≤j≤6n-1.\label{sym2-s}
\end{align}
\end{enumerate}
\end{lem}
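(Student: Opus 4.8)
The plan is to prove part (1) by evaluating the partial sums $s_p = p + \sum_{i=0}^{p-1} k_i$ directly from the explicit values of $k_i$ recorded in \eqref{value-kj}, and to deduce part (2) from the symmetries \eqref{symper-k} and \eqref{bij-k} of the $k$-sequence by reindexing. The whole computation hinges on one decisive cancellation, which I would isolate first.

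For the small-index regime $0 \le p \le n-1$ I would group the terms of $\sum_{i=0}^{3p-1} k_i$ into blocks and use that $k_{3j+1} = n-2-j$ and $k_{3j+2} = j$, so that the contributions $-j$ and $+j$ cancel and each block sums to $n-2$; since $k_0 = 0$ and $k_{3j+3} = 0$, this gives $\sum_{i=0}^{3p-1} k_i = p(n-2)$, whence $s_{3p} = 3p + p(n-2) = p(n+1)$. Appending the single terms $k_{3p} = 0$ and $k_{3p+1} = n-2-p$ then yields the formulas for $s_{3p+1}$ and $s_{3p+2}$ in the same range. The central values $s_{3n-1}, s_{3n}, s_{3n+1}$ must be handled separately, because the three large entries $k_{3n-2}=n-1$, $k_{3n-1}=n$, $k_{3n}=n-1$ break the pattern; a short direct computation starting from $\sum_{i=0}^{3n-3} k_i = (n-1)(n-2)$ produces $s_{3n-1} = n(n+1)$, $s_{3n} = (n+1)^2$ and $s_{3n+1} = n + (n+1)^2$, matching the claimed special cases.

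The large-index formulas ($p \ge n$) I would not recompute from scratch. Instead I would first establish the translation relation \eqref{sym1-s}: writing out the three sums defining $s_{j+3n+1}$, $s_j$ and $s_{3n+1}$, the linear parts cancel and one is reduced to $\sum_{i=3n+1}^{j+3n} k_i = \sum_{i=0}^{j-1} k_i$, which is exactly the shift \eqref{bij-k}, valid for $0 \le i \le 3n-3$ and hence forcing the range $0 \le j \le 3n-2$; combined with the already known $s_{3n+1} = n+(n+1)^2$, this gives \eqref{sym1-s}. Substituting the small-index formulas into \eqref{sym1-s} then delivers the large-index formulas directly, completing part (1). Finally, \eqref{sym2-s} follows by the same reindexing: after cancelling the linear parts one needs $\sum_{i=0}^{j-1} k_i = \sum_{i=6n-1-j}^{6n-2} k_i$, which is the reflection \eqref{symper-k}; and the constant is obtained from \eqref{sym1-s} at $j = 3n-2$, namely $s_{6n-1} = s_{3n-2} + s_{3n+1} = n^2 + \bigl(n+(n+1)^2\bigr) = (2n+1)(n+1)$.

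The only genuinely delicate point is the bookkeeping at the boundaries where the $k$-pattern changes regime, i.e. around the central block $k_{3n-2}, k_{3n-1}, k_{3n}$ and at $p = n-1$ for the family $s_{3p+2}$, where the clean telescoping fails and off-by-one errors are easy to make; deriving the large-index cases from \eqref{sym1-s} rather than resumming avoids most of this risk. Everything else is routine once the cancellation $k_{3j+1}+k_{3j+2} = n-2$ and the two symmetries \eqref{symper-k} and \eqref{bij-k} are in hand.
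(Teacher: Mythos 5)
Your proposal is correct and follows essentially the same route as the paper: direct computation of $s_j$ for $j\le 3n+1$ from the values of the $k_i$, then deduction of \eqref{sym1-s} from \eqref{bij-k} to obtain the large-index formulas, and of \eqref{sym2-s} from \eqref{symper-k} together with $s_{6n-1}=s_{3n-2}+s_{3n+1}$. The explicit cancellation $k_{3j+1}+k_{3j+2}=n-2$ is a nice way to organize the direct computation that the paper leaves implicit.
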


\begin{proof} Recall that, by definition, $s_0=0$ and $s_j=j+\sum_{i=0}^{j-1}k_i$. Thus, from the values \eqref{value-kj} of the $k_i$'s it is  easy  to compute first all values of $s_j$ for $0≤j≤3n+1$. Then we prove \eqref{sym1-s} by using \eqref{bij-k}:
\begin{equation*}
s_{j+3n+1}=s_{3n+1}+j+\sum_{i=3n+1}^{3n+j}k_i=s_{3n+1}+j+\sum_{i=0}^{j-1}k_i=s_{3n+1}+s_j
\end{equation*}
and in particular these symmetries give the remaining values of $s_j$ for $3n+2≤j≤6n-4$. Similarly, the second symmetry $s_j+s_{6n-1-j}=s_{6n-1}$ follows from \eqref{symper-k} and it remains only to compute, with the help of \eqref{sym1-s}
\begin{equation*}
s_{6n-1}=s_{3n-2}+s_{3n+1}=n+(n+1)^2+n^2=(2n+1)(n+1).\qedhere
\end{equation*}
\end{proof}

\begin{lem} \label{value-epsj}
\begin{enumerate}
\item The values of the parameter $\eps_j$, for $0≤j≤6n-4$, are given by
\begin{align*}
\eps_{3p}&=
\begin{dcases*}
\textstyle\fr{p(n-p-1)(n-1)}{2}+\fr{p(p-1)(p+1)}{3},& if $0≤p≤n-1$,\\
\textstyle\fr{n(n^2+2)}{3},& if $p=n$,\\
\textstyle\fr{n(n+1)(2n+1)}{6}+\fr{(p-n)(2n-p)(n-2)}{2}+\fr{(p-n)(p-n-1)(p-n-2)}{3},& if $n+1≤p≤2n-2$;
\end{dcases*}
\\
\eps_{3p+1}&=
\begin{dcases*}
\eps_{3p},& if $0≤p≤n-1$,\\
\textstyle \frac{n(n+1)(2n+1)}{6}+\fr{(p-n)(2n-p-1)(n-1)}{2}+\fr{(p-n)(p-n-1)(p-n+1)}{3},& if $n≤p≤2n-2$;
\end{dcases*}
\\
\eps_{3p+2}&=
\begin{dcases*}
\textstyle\fr{(p+1)(n-p-1)(n-2)}{2}+\fr{p(p-1)(p+1)}{3},& if $0≤p≤n-2$,\\
\textstyle\fr{n(n-1)(2n-1)}{6},& if $p=n-1$,\\
\eps_{3p+1},& if $n≤p≤2n-2$.
\end{dcases*}
\end{align*}
\item We have the following relations:
\begin{align}
&\textstyle\eps_{j+3n+1}=\eps_j+\eps_{3n+1}=\eps_j+\fr{n(n+1)(2n+1)}{6}\quad\text{for }0≤j≤3n-2,\label{sym1-eps}\\
&\textstyle\eps_j+\eps_{6n-1-j}=\eps_{6n-1}=\fr{n(n+1)(2n+1)}{6}+\fr{n(n-1)(n-2)}{3}\quad\text{for }0≤j≤6n-1.\label{sym2-eps}
\end{align}
\end{enumerate}
\end{lem}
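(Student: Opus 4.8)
The plan is to mirror exactly the proof of \cref{value-sj}, replacing the summand $1+k_i$ that defines $s_j=\sum_{i=0}^{j-1}(1+k_i)$ by the summand $\frac{k_i(k_i+1)}{2}$ that defines $\eps_j$. Since $\frac{k_i(k_i+1)}{2}$ is quadratic in $k_i$ while $k_i$ is itself piecewise linear in the block index, the partial sums will now be cubic rather than quadratic in the parameters, which accounts for the cubic terms such as $\frac{p(p-1)(p+1)}{3}$ appearing in the statement. As there, I will first compute a base range directly from \eqref{value-kj}, then propagate to the whole period by the $k$-symmetries.

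First I would establish part (1) for the range covering $0\le j\le 3n+1$ by direct computation. Grouping the indices into the repeating triples $(k_{3i+1},k_{3i+2},k_{3i+3})=(n-2-i,\,i,\,0)$, the contributions to a partial sum such as $\eps_{3p}$ come from two runs of values, namely $\{n-2,n-3,\ldots\}$ and $\{0,1,\ldots\}$, and evaluating $\sum\frac{m(m+1)}{2}$ over each run via the identity $\sum_{m=0}^{M}\frac{m(m+1)}{2}=\binom{M+2}{3}$ produces the stated closed form after simplification. Several of the asserted equalities are immediate: $\eps_{3p+1}=\eps_{3p}$ for $0\le p\le n-1$ holds because $k_{3p}=0$, so no new term is added, and likewise $\eps_{3p+2}=\eps_{3p+1}$ for $n\le p\le 2n-2$ holds because $k_{3p+1}=0$ throughout that range (including $k_{6n-5}=0$ at $p=2n-2$).

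Next I would prove the two symmetry relations of part (2), which are the analogues of \eqref{sym1-s} and \eqref{sym2-s} and follow from the same $k$-symmetries. Relation \eqref{sym1-eps} comes from the shift \eqref{bij-k}: splitting $\eps_{j+3n+1}=\sum_{i=0}^{3n}\frac{k_i(k_i+1)}{2}+\sum_{i=3n+1}^{3n+j}\frac{k_i(k_i+1)}{2}$ and reindexing the second sum via $k_{i+3n+1}=k_i$ gives $\eps_{3n+1}+\eps_j$. The value $\eps_{3n+1}=\frac{n(n+1)(2n+1)}{6}$ is obtained by a direct count: for $0\le i\le 3n$ the nonzero $k_i$ consist of two copies of $\{0,1,\ldots,n-2\}$ together with the central triple $(n-1,n,n-1)$, whence $\eps_{3n+1}=\frac{n(n-1)(n-2)}{3}+\frac{n(n+1)}{2}+n(n-1)$, which simplifies to $\sum_{m=1}^{n}m^2$. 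Relation \eqref{sym2-eps} comes from the palindrome \eqref{symper-k}: the substitution $i\mapsto 6n-2-i$ identifies the tail sum $\eps_{6n-1}-\eps_{6n-1-j}$ with the head sum $\eps_j$, and the value $\eps_{6n-1}=\eps_{3n-2}+\eps_{3n+1}$ follows from \eqref{sym1-eps}, with $\eps_{3n-2}=\frac{n(n-1)(n-2)}{3}$ read off from the case $p=n-1$ of part (1).

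Finally, the formulas of part (1) in the upper ranges ($n+1\le p\le 2n-2$ for the $3p$ family, and $n\le p\le 2n-2$ for the $3p+1$ and $3p+2$ families) are deduced by applying \eqref{sym1-eps}, exactly as the high-index values of $s_j$ were obtained in \cref{value-sj}. The only genuinely laborious step is the algebraic simplification in part (1): carefully tracking which block-values enter each partial sum at the boundaries — in particular whether $\eps_{3p+1}$ or $\eps_{3p+2}$ already includes the partial contribution of the current triple — and reducing the resulting cubic polynomials in $n$ and $p$ to the precise closed forms stated. This is routine but error-prone, and it is where I would concentrate the verification; everything else is a formal consequence of the $k$-symmetries already established.
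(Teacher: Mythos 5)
Your proposal is correct and follows essentially the same route as the paper: direct computation of $\eps_j$ for $j\le 3n+1$ from the values \eqref{value-kj} via the classical closed form for $\sum m(m+1)/2$, then derivation of \eqref{sym1-eps} from \eqref{bij-k} and of \eqref{sym2-eps} from the palindromic symmetry \eqref{symper-k}, with the upper ranges of part (1) deduced from \eqref{sym1-eps}. Your explicit count giving $\eps_{3n+1}=\frac{n(n-1)(n-2)}{3}+n(n-1)+\frac{n(n+1)}{2}=\frac{n(n+1)(2n+1)}{6}$ checks out, as do the observations that $k_{3p}=0$ and $k_{3p+1}=0$ force the stated coincidences of consecutive $\eps$'s.
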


\begin{proof}
Recall that $\eps_j:=\sum_{i=0}^{j-1}\frac{k_i(k_i+1)}{2}$ with  convention $\eps_0:=0$. As one can guess, computations rely essentially on the values \eqref{value-kj} of the $k_j$'s and on the classical formula
\begin{equation*}
\sum_{i=a}^{a+b-1}i(i+1)=ab(a+b)+\fr{b(b-1)(b+1)}3
\end{equation*}
for nonnegative integers $a$ and $b$, at least for $\eps_j$ with $j≤3n+1$. The remaining cases, as in the previous lemma, will follow from the symmetry relations \eqref{sym1-eps} and \eqref{sym2-eps}. To prove the first one, we simply use \eqref{bij-k}, while to prove the second one we use \eqref{sym1-s}: we write for $1≤j≤6n-1$ (the remaining case $j=0$ is trivial)
\begin{equation*}
\eps_j=\sum_{i=0}^{j-1}\frac{k_i(k_i+1)}{2}
=\sum_{i=0}^{j-1}\frac{k_{6n-2-i}(k_{6n-2-i}+1)}{2}
=\sum_{i=6n-1-j}^{6n-2}\frac{k_{i}(k_{i}+1)}{2}
\end{equation*}
so that
\begin{equation*}
\eps_j+\eps_{6n-1-j}=\sum_{i=6n-1-j}^{6n-2}\frac{k_{i}(k_{i}+1)}{2}
+\sum_{i=0}^{6n-2-j}\frac{k_{i}(k_{i}+1)}{2}=\eps_{6n-1}.
\end{equation*}
We then employ \eqref{sym1-eps} to get that
\begin{equation*}
\eps_{6n-1}=\eps_{3n+1}+\eps_{3n-2}=\fr{n(n+1)(2n+1)}{6}+\fr{n(n-1)(n-2)}{2}.
\qedhere
\end{equation*}
\end{proof}

%%%%%%%%%%%%%%%%%%%%%%%%
\subsection{Proof of \texorpdfstring{\cref{MainValPer}}{Theorem~\ref{MainValPer}} in the case $\ell=0$}
%%%%%%%%%%%%%%%%%%%%%%%%

First we note that, when $\ell=0$, first assertion in \cref{MainValPer} follows immediately from \cref{MainHF}. Indeed, we see that  the sequence $(v_i)$ defined in \eqref{superfrac} can only have values $\pm1$. More precisely, using the fact that the sequence is $(6n-4)$-periodic with offset $1$, i.e. $v_{i+6n-4}=v_i$ for all $i≥1$ and by looking at the list $\{v_0,v_1,\ldots,v_{6n-4}\}$ in \eqref{Hfrac-Phin} we get that
\begin{equation}\label{value-vj}
v_j=
\begin{dcases*}
1& if $j=0$, or $j\equiv 3n-2\mod {6n-4}$, or $j\equiv 3n+1\mod {6n-4}$,\\
-1& else.
\end{dcases*}
\end{equation}
Applying formula \eqref{HankelHF} we deduce that Hankel determinants $\De_{j}$ can only take the values $-1,0,1$.

Thus, it remains to prove the second assertion in \cref{MainValPer}, i.e. formula
\begin{equation}\label{per-De0}
\De_{j+2n(n+1)}=(-1)^n\De_{j}\quad\text{for all } j≥0.
\end{equation}
In fact, to do this, it is not even necessary to know the explicit expression of the Hankel determinants, which will be given in \cref{section-HD0}. 

\begin{lem}\label{sp+6n-4}
We have 
$s_{p+6n-4}=s_p+2n(n+1)$
for all $p≥0$.
\end{lem}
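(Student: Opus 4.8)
The plan is to unwind the definition \eqref{s-def} and reduce the claim to a single sum of the $k_i$ over one period. Since $s_p=p+\sum_{i=0}^{p-1}k_i$ for $p\ge1$ (and $s_0=0$), I would first subtract to obtain
\[
s_{p+6n-4}-s_p=(6n-4)+\sum_{i=p}^{p+6n-5}k_i,
\]
a sum over exactly $6n-4$ consecutive indices. For $p=0$ the same shape appears, namely $s_{6n-4}=(6n-4)+\sum_{i=0}^{6n-5}k_i$, because $s_0=0$. Thus the whole lemma amounts to showing that this window-sum is the constant $2n(n+1)-(6n-4)=2n^2-4n+4$, independently of $p$.

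The next step is to exploit periodicity. From the relation $\mathcal{Q}_{j+6n-4}=\mathcal{Q}_j$ ($j\ge1$) established in \cref{section-pfD} we have $k_{i+6n-4}=k_i$ for all $i\ge1$. Moreover this periodicity also holds at $i=0$: indeed $k_0=0$, and $k_{6n-4}=0$ as well (for instance by the symmetry \eqref{symper-k} with $i=2$, giving $k_{6n-4}=k_2=0$). Hence $(k_i)_{i\ge0}$ is genuinely $(6n-4)$-periodic, so every window of $6n-4$ consecutive indices carries the same sum, equal to the one full period $\Sigma:=\sum_{i=0}^{6n-5}k_i$ tabulated in \eqref{value-kj}. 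This disposes of the dependence on $p$ and of the $p=0$ edge case simultaneously.

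It then remains to evaluate $\Sigma$ from the explicit values \eqref{value-kj}, which I would do by grouping the indices into their three natural blocks. The first block contributes $\sum_{i=0}^{n-2}\big((n-2-i)+i+0\big)=(n-1)(n-2)$; the central triple contributes $k_{3n-2}+k_{3n-1}+k_{3n}=(n-1)+n+(n-1)=3n-2$; the second block contributes $\sum_{i=n}^{2n-3}\big(0+(2n-2-i)+(i-n)\big)=(n-2)^2$; and the two isolated terms satisfy $k_0=k_{6n-5}=0$. Summing gives $\Sigma=(n-1)(n-2)+(3n-2)+(n-2)^2=2n^2-4n+4$, whence $s_{p+6n-4}-s_p=(6n-4)+\Sigma=2n^2+2n=2n(n+1)$ for every $p\ge0$.

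There is no conceptual obstacle here: the statement is, in essence, the assertion that the total $k$-weight of one period equals $2n^2-4n+4$. The only points that require care are purely clerical — verifying in the block decomposition of \eqref{value-kj} that each index $0,\dots,6n-5$ is counted exactly once (no gaps or overlaps between the two families and the central triple), and the harmless boundary alignment at $p=0$, which works precisely because the head value $k_0$ and the closing value $k_{6n-4}$ both vanish.
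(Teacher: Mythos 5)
Your proof is correct and follows essentially the same route as the paper: both reduce the claim to the $(6n-4)$-periodicity of $(k_i)$ together with the identity $s_{p+6n-4}=s_p+s_{6n-4}$, the only difference being that you evaluate the constant $s_{6n-4}=2n(n+1)$ by summing the explicit values \eqref{value-kj} over one period, whereas the paper simply reads it off from \cref{value-sj}. Your extra care about the boundary (checking $k_{6n-4}=k_0=0$ so that the periodicity genuinely covers every window) is a valid and slightly more explicit treatment of a point the paper passes over.
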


\begin{proof}
By definition,
\begin{equation*}
s_{p+6n-4}=p+6n-4+\sum_{i=0}^{p-1}k_i=s_{6n-4}+p+\sum_{i=6n-4}^{6n-4+p}k_i.
\end{equation*} 
Using the $(6n-4)$-periodicity of the sequence $(k_i)$ we obtain that $s_{p+6n-4}=s_{6n-4}+s_p$, and we  read the value 
\begin{equation}\label{s-6n-4}
s_{6n-4}=2n(n+1)
\end{equation}
from Lemma~\ref{value-sj}.
\end{proof}

From this lemma we deduce that $j\in \cS$ if and only if $j+2n(n+1)\in \cS$ and that it suffices to prove \eqref{per-De0} for $j\in\cS$ (see \eqref{s-def} for the definition of the set $\cS$). In other words, we are left to prove that
\begin{equation*}
\De_{s_p+2n(n+1)}=(-1)^n\De_{s_p}\quad\text{for all }p≥0.
\end{equation*}
This will be an immediate consequence of Lemma~\ref{sp+6n-4} and of the following result.

\begin{prop} \label{Desp+6n-4}
We have
$\De_{s_{p+6n-4}}=(-1)^n\De_{s_p}$
for all $p≥0$.
\end{prop}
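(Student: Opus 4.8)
The plan is to feed the explicit Hankel determinant formula \eqref{HankelHF} into the quotient $\De_{s_{p+6n-4}}/\De_{s_p}$ and show that it collapses to $(-1)^n$. Since every index $s_p\in\cS$ yields a nonzero determinant, both quantities are $\pm1$ and the quotient is well defined. Writing $\De_{s_p}=(-1)^{\eps_p}\prod_{i=0}^{p-1}v_i^{s_p-s_i}$ together with the analogous expression for $\De_{s_{p+6n-4}}$, I would split the longer product over $0\le i\le p+6n-5$ into the prefix $0\le i\le p-1$ and the block $p\le i\le p+6n-5$. On the prefix, \cref{sp+6n-4} gives $s_{p+6n-4}-s_i=(s_p-s_i)+2n(n+1)$, and since $2n(n+1)$ is even and each $v_i=\pm1$, the factor $v_i^{2n(n+1)}$ is trivial; hence the prefix product equals $\prod_{i=0}^{p-1}v_i^{s_p-s_i}$ and cancels against $\De_{s_p}$. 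What remains is
\[
\frac{\De_{s_{p+6n-4}}}{\De_{s_p}}=(-1)^{\eps_{p+6n-4}-\eps_p}\,R_p,\qquad R_p:=\prod_{i=p}^{p+6n-5}v_i^{s_{p+6n-4}-s_i}.
\]

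Next I would dispose of the sign coming from $\eps$. Because $\eps$ increments by $k_{i}(k_{i}+1)/2$ and $(k_i)$ is $(6n-4)$-periodic, the difference $\eps_{p+6n-4}-\eps_p$ is a full-period sum, hence the constant $\eps_{6n-4}$, whose value I read from \cref{value-epsj} via $\eps_{6n-4}=\eps_{3n-3}+\eps_{3n+1}$ using \eqref{sym1-eps}. It remains to understand the single-period product $R_p$; as $v_i=\pm1$, only the parity of the exponent matters, and using $s_{p+6n-4}-s_{p+t}=s_{6n-4}-\tilde s_t^{(p)}$ with $s_{6n-4}=2n(n+1)$ even (see \eqref{s-6n-4}) I may replace each exponent by the partial sum $\tilde s_t^{(p)}=t+\sum_{u=p}^{p+t-1}k_u$ modulo $2$, so that $R_p=(-1)^{\rho_p}$ with $\rho_p:=\sum_{t}[v_{p+t}=-1]\,\tilde s_t^{(p)}\bmod 2$.

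The first genuine point is that $R_p$ does not depend on $p$. I would prove $R_{p+1}=R_p$ by a one-step telescoping: the identity $\tilde s_{t-1}^{(p+1)}=\tilde s_t^{(p)}-(1+k_p)$ shows that passing from $\rho_p$ to $\rho_{p+1}$ changes it, modulo $2$, by $(1+k_p)N_p$, where $N_p$ is the number of indices with $v_i=-1$ in one period. By the pattern \eqref{value-vj}, each period contains exactly two indices with $v_i=+1$, so $N_p=6n-6$ is even and $\rho_{p+1}\equiv\rho_p\pmod2$. Thus $R_p$ is constant in $p$, and it suffices to evaluate it once, say at $p=0$, i.e. to compute $\De_{2n(n+1)}=\De_{s_{6n-4}}$ directly.

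The hard part will be this last explicit evaluation: showing $(-1)^{\eps_{6n-4}}R_0=(-1)^n$, equivalently that $\rho_0+\eps_{6n-4}\equiv n\pmod2$. A brute-force summation over a period is unpleasant, so I would lean on the symmetry relations of \cref{value-sj,value-epsj} — namely $k_i=k_{6n-2-i}$ from \eqref{symper-k}, $s_i+s_{6n-1-i}=s_{6n-1}$ from \eqref{sym2-s}, and the corresponding \eqref{sym2-eps} — to pair the index $i$ with $6n-2-i$ inside the period. Under this involution the paired contributions cancel or combine into even multiples, leaving only the two exceptional positions $i\equiv 3n-2,3n+1$ (where $v_i=+1$) together with the central fixed index $i=3n-1$ to account for; collecting these, and using $\eps_{6n-4}\equiv\tfrac{n(n+1)}2\pmod2$, should produce exactly $(-1)^n$ and complete the proof. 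I expect this parity bookkeeping to be the real obstacle, the preceding reductions being essentially formal.
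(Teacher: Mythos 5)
Your reductions are sound and track the paper's own strategy quite closely: you use the product formula \eqref{HankelHF}, cancel the prefix $0\le i\le p-1$ using $s_{p+6n-4}-s_i=(s_p-s_i)+2n(n+1)$ with $2n(n+1)$ even, pull out $\eps_{6n-4}$ by periodicity of $(k_i)$, and reduce the remaining one-period product to a $p$-independent quantity. Your telescoping argument for the $p$-independence of $R_p$ is correct (the boundary terms vanish mod $2$ because $\tilde s_0^{(p)}=0$ and $\tilde s_{6n-4}^{(p)}=2n(n+1)$ is even, and $N_p=6n-6$ is even) and is a nice touch: it handles $p=0$ uniformly, whereas the paper must treat $p=0$ separately.

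The genuine gap is the final single-period evaluation, which you yourself flag as ``the real obstacle'' and then only sketch. Showing $(-1)^{\eps_{6n-4}}R_0=(-1)^n$, i.e.\ $(-1)^{\eps_{6n-4}}\prod_{i=1}^{6n-5}v_i^{s_i}=(-1)^n$, is the entire content of the proposition once the formal reductions are done, and your sketch of it contains errors that would derail the computation. First, the congruence you invoke, $\eps_{6n-4}\equiv\tfrac{n(n+1)}{2}\pmod 2$, is false: by \cref{value-epsj} one has $\eps_{6n-4}=\tfrac{(2n-1)(n^2-n+3)}{3}$, which is \emph{always odd}, while $\tfrac{n(n+1)}{2}$ is even for $n\equiv 0,3\pmod 4$ (e.g.\ $n=3$ gives $\eps_{14}=15$ but $\tfrac{n(n+1)}{2}=6$). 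Second, the decomposition $\eps_{6n-4}=\eps_{3n-3}+\eps_{3n+1}$ is off by one period index; \eqref{sym1-eps} gives $\eps_{6n-4}=\eps_{3n-5}+\eps_{3n+1}$. Third, the pairing you propose, $i\leftrightarrow 6n-2-i$, is the symmetry of the $k_i$'s, not of the $s_i$'s; the relevant relation is $s_i+s_{6n-1-i}=s_{6n-1}$ from \eqref{sym2-s}, and the set $\{i:v_i=-1\}$ is not stable under $i\mapsto 6n-1-i$ on the range in question, so ``paired contributions cancel'' needs real justification. The paper instead computes directly: $\sum_{i=1}^{6n-4}s_i-s_{3n-2}-s_{3n+1}=(3n-2)(2n+1)(n+1)-(4n+1)(n+1)=(n+1)(6n^2-5n-3)\equiv n+1\pmod 2$, using $\sum_{j=1}^{6n-2}s_j=(3n-1)s_{6n-1}$ together with the explicit values \eqref{s6n-3}--\eqref{s6n-2}; combined with $\eps_{6n-4}$ odd this yields $(-1)^n$. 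Until you carry out an evaluation of this kind, the proof is incomplete.
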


\begin{proof}
First of all, note that, since  parameters $v_i$ can only be $\pm1$ 
we can rewrite the expression of nonzero Hankel determinants \eqref{HankelHF} as follows:
\begin{equation}\label{Hankel-prod}
\Delta_{s_p}
=(-1)^{\eps_p}(v_0 v_1\cdots v_{p-1})^{s_p}\,v_1^{s_1}v_2^{s_2}\cdots{}v_{p-1}^{s_{p-1}}.
\end{equation}
However, in this formula we must exclude the case $p=0$, i.e. $s_p=0$, where $\De_0=1$ by convention, and this leads us to divide our proof into two parts.
\smallbreak

$\bullet$ Let us first suppose that $p≥1$. In this case, formula \eqref{Hankel-prod} also gives
\begin{align*}
\Delta_{s_{p+6n-4}}
&=(-1)^{\eps_{p+6n-4}}\bigl(v_0 v_1\cdots v_{p+6n-5})^{s_{p+6n-4}}\,v_1^{s_1}v_2^{s_2}\cdots v_{p+6n-5}^{s_{p+6n-5}}\\
&=(-1)^{\eps_{p+6n-4}}\bigl(v_0 v_1\cdots v_{6n-4}\bigr)^{s_{p+6n-4}}
\bigl(v_{6n-3} v_{6n-2} \cdots v_{p+6n-5}\bigr)^{s_{p+6n-4}}\\
&\qquad\times\bigl(v_1^{s_1}v_2^{s_2}\cdots v_{6n-4}^{s_{6n-4}}\bigr)
 \bigl(v_{6n-3}^{s_{6n-3}}v_{6n-2}^{s_{6n-2}}\cdots v_{p+6n-5}^{s_{p+6n-5}}\bigr).
\end{align*}
Let us simplify  last expression. Firstly, because of \eqref{value-vj} we have 
\begin{equation*}
v_0 v_1\cdots v_{6n-4}=(-1)^{6n-3-3}=1.
\end{equation*} 
Secondly, by using the $(6n-4)$-periodicity of the sequence $(k_i)$ we see that $\eps_{p+6n-4}=\eps_{6n-4}+\eps_p$, with 
\begin{equation}\label{eps-6n-4}
\eps_{6n-4}=\fr{(2n-1)(n^2-n+3)}3
\end{equation}
 by Lemma~\ref{value-epsj} and we note that this is always an odd number. Thirdly, since the sequence $(v_i)$ is $(6n-4)$-periodic with values in $\{\pm 1\}$, since $s_{p+6n-4}$ has same parity as $s_{p}$ by Lemma~\ref{sp+6n-4} and since $v_0=1$, we get
\begin{equation*}
\bigl(v_{6n-3} v_{6n-2} \cdots v_{p+6n-5}\bigr)^{s_{p+6n-4}}
=\bigl(v_{1} v_{2} \cdots v_{p-1}\bigr)^{s_{p}}
=\bigl(v_0 v_{1} \cdots v_{p-1}\bigr)^{s_{p}}
\end{equation*}
and, similarly,
\begin{equation*}
v_{6n-3}^{s_{6n-3}}v_{6n-2}^{s_{6n-2}}\cdots v_{p+6n-5}^{s_{p+6n-5}}
=v_{1}^{s_{1}}v_{2}^{s_{2}}\cdots v_{p-1}^{s_{p-1}}.
\end{equation*}
To sum up, so far we have proved that
\begin{align*}
\Delta_{s_{p+6n-4}}
&=(-1)^{\eps_{p}+1}
\bigl(v_{0} v_{1} \cdots v_{p-1}\bigr)^{s_{p}}
\bigl(v_{1}^{s_{1}}v_{2}^{s_{2}}\cdots v_{p-1}^{s_{p-1}}\bigr)
\bigl(v_1^{s_1}v_2^{s_2}\cdots v_{6n-4}^{s_{6n-4}}\bigr)\\
&=-\De_{s_p}\bigl(v_1^{s_1}v_2^{s_2}\cdots v_{6n-4}^{s_{6n-4}}\bigr).
\end{align*}
It remains to calculate the product in the right-hand side of this equality, which by \eqref{value-vj} equals
\begin{equation*}
(-1)^{s_1+s_2+\cdots+s_{6n-4}-s_{3n-2}-s_{3n+1}}.
\end{equation*}
But
\begin{multline*}
s_1+s_2+\cdots+s_{6n-4}-s_{3n-2}-s_{3n+1}=s_1+s_2+\cdots+s_{6n-2}-(s_{6n-3}+s_{6n-2})-(s_{3n-2}+s_{3n+1})
\end{multline*}
with
\begin{align}
s_{6n-3}&=s_{3n-4}+s_{3n+1}=2n^2+2n+1,\label{s6n-3}\\
s_{6n-2}&=s_{3n-3}+s_{3n+1}=2n^2+3n\label{s6n-2}
\end{align}
by Lemma~\ref{value-sj}, and
\begin{equation*}
s_1+s_2+\cdots+s_{6n-2}-(s_{3n-2}+s_{3n+1})=(3n-2)(2n+1)(n+1)
\end{equation*}
by \eqref{sym2-s}, so that
\begin{align*}
s_1+s_2+\cdots+s_{6n-4}-s_{3n-2}-s_{3n+1}
&=(3n-2)(2n+1)(n+1)-(4n+1)(n+1)\\
&=(n+1)(6n^2-5n-3).
\end{align*}
Finally we find that
\begin{equation}\label{vi-prod}
v_1^{s_1}v_2^{s_2}\cdots v_{6n-4}^{s_{6n-4}}=(-1)^{s_1+s_2+\cdots+s_{6n-4}-s_{3n-2}-s_{3n+1}}=(-1)^{n+1}
\end{equation}
and
\begin{equation*}
\De_{s_{p+6n-4}}=(-1)^n\De_{s_p}\quad\text{for all }p≥1
\end{equation*}
as required.
\smallbreak

$\bullet$ To finish our proof, it remains to treat the case $p=0$. Namely, we have  to check that $\De_{s_{6n-4}}=(-1)^n$, where
\begin{equation*}
\Delta_{s_{6n-4}}
=(-1)^{\eps_{6n-4}}(v_0 v_1\cdots v_{6n-5})^{s_{6n-4}}\,v_1^{s_1}v_2^{s_2}\cdots{}v_{6n-5}^{s_{6n-5}}
\end{equation*}
by \eqref{Hankel-prod}.
But the integer $\eps_{6n-4}$ is odd by \eqref{eps-6n-4} and the integer $s_{6n-4}$ is even by \eqref{s-6n-4}. Together with the result of \eqref{vi-prod} this yields
\begin{equation*}
\Delta_{s_{6n-4}}=(-1)\cdot 1\cdot (-1)^{n+1}=(-1)^n.\qedhere
\end{equation*}
\end{proof}

This completes the proof of \cref{MainValPer} in the case $\ell=0$.

%%%%%%%%%%%%%%%%%%%%%%%%
%%%%%%%%%%%%%%%%%%%%%%%%
\section{\texorpdfstring{The Hankel determinants $\De_j(\Phi_n)$}{The Hankel determinants}}
\label{section-HD0}
%%%%%%%%%%%%%%%%%%%%%%%%
%%%%%%%%%%%%%%%%%%%%%%%%

Recall from \eqref{HankelHF} that the nonzero Hankel determinants $\De_j$ are exactly the ones for which $j\in\cS$, i.e. $j=s_q$ for some integer $q≥0$. We give now their explicit values, taking into account that it suffices to restrict ourselves to a finite subsequence of length $2n(n+1)$ which is the period when $n$ is even (resp. the anti-period when $n$ is odd), by the results proved in the previous section.

\begin{thm} \label{thm-Hankel0}
\begin{enumerate}
\item The only nonzero Hankel determinants in the list $\De_0,\De_1,\ldots,\De_{2n(n+1)-1}$ formed by the first (anti-)period are the $\De_{s_q}$ with $0≤q≤6n-5$. Their explicit values are as  follows:
\begin{align}
\De_{s_{3p}}&=
\begin{dcases*}
(-1)^{p\fr{n(n-1)}{2}}(-1)^{\fr{p(p-1)}2},& if $0≤p≤n-1$,\\
(-1)^{\fr{n(n+1)(n+2)}{2}},& if $p=n$,\\
(-1)^{(p+1)\fr{n(n-1)}{2}}(-1)^n,& if $n+1≤p≤2n-2$;
\end{dcases*}
\label{Des3p}\\ 
\De_{s_{3p+1}}&=(-1)^{p\fr{n(n-1)}{2}}(-1)^{\fr{p(p+1)}2}, \qquad\text{if }0≤p≤2n-2;
\label{Des3p+1}\\ 
\De_{s_{3p+2}}&=
\begin{dcases*}
(-1)^{(p+1)\fr{n(n-1)}{2}},& if $0≤p≤n-2$,\\
(-1)^{\fr{n(n-1)^2}2},& if $p=n-1$,\\
(-1)^{p\fr{n(n-1)}{2}}(-1)^{\fr{(p+1)(p+2)}2},& if $n≤p≤2n-3$.
\end{dcases*}
\label{Des3p+2}
\end{align}
\item Besides the (anti-)periodicity \eqref{per-De0} already proved, the Hankel determinants satisfy the following symmetry property:
\begin{equation}\label{symetrie-De0}
\De_{j}=(-1)^{\fr{n(n+1)}{2}}\De_{(2n+1)(n+1)-j}\quad \text{for }0≤j≤(2n+1)(n+1).
\end{equation}
\end{enumerate} 
\end{thm}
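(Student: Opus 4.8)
The plan is to derive everything from the master sign formula \eqref{Hankel-prod}, which expresses each nonzero Hankel determinant $\De_{s_p}$ as a product of $\pm 1$'s. Since \eqref{value-vj} guarantees every $v_i\in\{-1,+1\}$, I can write $\De_{s_p}=(-1)^{E_p}$ with
\[
E_p \equiv \eps_p + N_p\, s_p + \sum_{\substack{1\le i\le p-1\\ v_i=-1}} s_i \pmod 2,
\]
where $N_p:=\#\{0\le i\le p-1 : v_i=-1\}$. By \eqref{value-vj}, $N_p$ is piecewise linear in $p$ (namely $N_p=p-1,\,p-2,\,p-3$ according as $p\le 3n-2$, $3n-1\le p\le 3n+1$, or $p\ge 3n+2$), and the only indices excluded from the last sum are $i=3n-2$ and $i=3n+1$. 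Thus each determinant is completely determined by the parities of $\eps_p$, $s_p$ and of the partial sums $\sum_{i=1}^{p-1}s_i$, all available in closed form from \cref{value-sj} and \cref{value-epsj}. That the only nonzero determinants among $\De_0,\ldots,\De_{2n(n+1)-1}$ are the $\De_{s_q}$ with $0\le q\le 6n-5$ is immediate from the second line of \eqref{HankelHF} together with $s_{6n-4}=2n(n+1)$ (see \eqref{s-6n-4}).

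I would prove the symmetry \eqref{symetrie-De0} \emph{first}, since it is precisely the tool that reduces the explicit computation to half a period. If $j\notin\cS$ then $\De_j=0$, and by \eqref{sym2-s} one has $s_{6n-1}-j\notin\cS$ as well, so both sides vanish; hence it suffices to treat $j=s_p$, where $(2n+1)(n+1)=s_{6n-1}$ and $s_{6n-1}-s_p=s_{6n-1-p}$ by \eqref{sym2-s}. The statement becomes $E_p+E_{6n-1-p}\equiv n(n+1)/2\pmod 2$. The three ingredients then combine cleanly: the $\eps$-terms give $\eps_p+\eps_{6n-1-p}=\eps_{6n-1}$ by \eqref{sym2-eps}; the products $\prod v_i^{s_i}$ over the two complementary ranges simplify using the numerator symmetry $v_i=v_{6n-1-i}$, which follows from \eqref{symper-k} exactly as in the type-(1) symmetry of \cref{section-symmetries}; and the $N_p\,s_p$ contributions are controlled by \eqref{sym2-s}. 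All $p$-dependence must cancel, and the surviving constant, governed by the parity of $\eps_{6n-1}$, equals $n(n+1)/2$.

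With \eqref{symetrie-De0} and the (anti-)periodicity \eqref{per-De0} in hand, the explicit formulas \eqref{Des3p}--\eqref{Des3p+2} need only be checked for $q=3p,\,3p+1,\,3p+2$ with $p$ up to the centre (roughly $p\le n$); the values for larger $p$ follow from \eqref{symetrie-De0}, after reducing indices into one period by \eqref{per-De0} when necessary. For the direct range I would substitute the piecewise values of $s_j$ and $\eps_j$ from \cref{value-sj} and \cref{value-epsj} into $E_p$ and simplify. The triangular-number exponents $\binom{p}{2}$, $\binom{p+1}{2}$, etc. appearing in \eqref{Des3p}--\eqref{Des3p+2} arise precisely from the partial sums $\sum_{i=1}^{p-1}s_i$ (whose summands grow linearly in $i$), while the prefactors $(-1)^{p\,n(n-1)/2}$ record the parity contributed by $\eps_p$.

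The main obstacle is the parity bookkeeping. Both parts hinge on evaluating $\sum_{1\le i\le p-1,\,v_i=-1} s_i$ and $N_p\,s_p$ modulo $2$ in each residue class of $p$ modulo $3$ and in each of the subcases dictated by the piecewise formulas of \cref{value-sj} and \cref{value-epsj} -- in particular around the central indices $p=n-1,n,n+1$ and around $p=3n-2,\,3n+1$, where the pattern of the $v_i$ changes. Making these parities collapse onto the stated closed forms, and, for \eqref{symetrie-De0}, onto the single constant $(-1)^{n(n+1)/2}$ independent of $p$, is elementary but delicate, and is where essentially all the labour lies.
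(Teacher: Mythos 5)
Your proposal follows essentially the same route as the paper: both rest on the sign-product formula \eqref{Hankel-prod}, prove the point symmetry \eqref{symetrie-De0} via \eqref{sym2-s}, \eqref{sym2-eps} and the $\pm1$ pattern \eqref{value-vj} of the $v_i$, and then obtain the explicit values on the second half of the range from a direct parity computation on the first half (up to $q\le 3n-1$). The only difference is that you establish the symmetry before the explicit values rather than after, which is immaterial, provided you treat separately the boundary cases that the paper isolates, namely that \eqref{Hankel-prod} excludes $p=0$ and that the numerator symmetry $v_i=v_{6n-1-i}$ fails at $i=0$.
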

It is important to note that, since $(2n+1)(n+1)$ is strictly greater than the length of the (anti-)period $2n(n+1)$,  symmetry \eqref{symetrie-De0} holds inside the whole first (anti-)period.

\begin{ex} Let us give the explicit values of Hankel determinants $\De_j=\De_j(\Phi_n)$ when $1≤n≤5$.
\begin{enumerate}
\item Recall from \eqref{HankelGold2} that sequence $\De(\Phi_1)$ is $4$-antiperiodic and starts with
\begin{equation*}
1,1,1,0.
\end{equation*}
\item Sequence $\De(\Phi_2)$ is $12$-periodic and the period is
\begin{equation*}
1,\,1,-1,-1,\,1,\,0,\,-1,\,0,\,0,\,1,\,0,-1.
\end{equation*}
(See also Theorem~1.8 in \cite{OP25}.)
\item Sequence $\De(\Phi_3)$ is $24$-antiperiodic and starts with
\begin{equation*}
1,1,0,-1,-1,1,1,0,-1,-1,0,0,1,0,0,0,1,0,0,-1,-1,0,1,1.
\end{equation*}
\item Sequence $\De(\Phi_4)$ is $40$-periodic and the period is
\begin{align*}
&1, 1, 0, 0, 1, 1, -1, 0, 1, 0, -1, -1, 1, 0, 0, -1, 1, 0, 0, 0,\\
& 1, 0, 0, 0, 0, 1, 0, 0, 0, 1, -1, 0, 0, 1, -1, -1, 0, 1, 0, -1.
\end{align*}
\item Sequence $\De(\Phi_5)$ has an antiperiodicity of length $60$ and starts with
\begin{align*}
&1, 1, 0, 0, 0, 1, 1, -1, 0, 0, 1, 0, -1, -1, 0, 1, 0, 0, -1, 1, 1, 0, 0, 0, 1, 1, 0, 0, 0, 0,\\
& 1, 0, 0, 0, 0, 0, -1, 0, 0, 0, 0, -1, -1, 0, 0, 0, -1, -1, 1, 0, 0, -1, 0, 1, 1, 0, -1, 0, 0, 1
\end{align*}
These $60$ numbers and their symmetric patterns are best understood with
\cref{fig:Phi5} which shows the first $70$ Hankel determinants. Values are plotted with  bullet points for the first antiperiod and with crosses afterwards. In particular one can visualize property \eqref{symetrie-De0}:
\begin{equation*}
\De_{j}=\De_{66-j}\quad \text{for }0≤j≤66,
\end{equation*}
which indicates a symmetry with respect to the point $(33,0)$ (labeled with a blank circle on the figure) inside the subset $\{(j,\De_j),\ 0≤j≤66\}$ of the graph.
\end{enumerate}
\end{ex}
\begin{figure}
  \includegraphics[width=\linewidth]{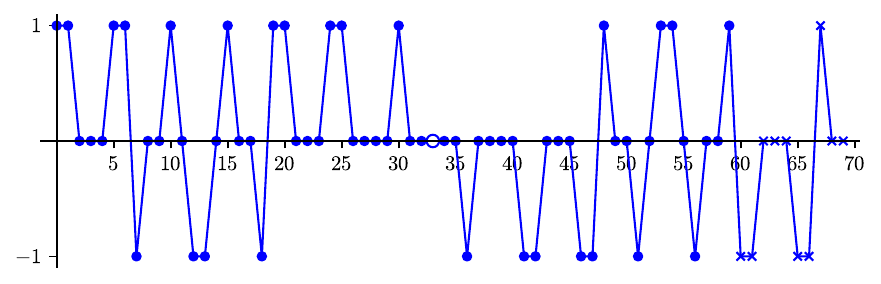}
  \caption{First $70$ Hankel determinants of $\Phi_5$.}
  \label{fig:Phi5}
\end{figure}

\begin{rem} \Cref{fig:Phi5} helps us to visualize the repartition of zero Hankel determinants, 
in particular we can observe the existence of subsets formed with  $k$ consecutive zeros for $k=1,2,3,4,5$. Using the results of \cref{value-sj}, it is not difficult to see that this is a general phenomenon: for any $k\in\{1,\ldots,n\}$ there exists a set of $k$ successive zero Hankel determinants, and  the case $k=n$ occurs exactly once, between the nonzero determinants $\De_{s_{3n-1}}$ and $\De_{s_{3n}}$. Actually this maximal set of $n$ successive zero determinants is also the only one which is invariant under the symmetry \eqref{symetrie-De0} (see \cref{fig:Phi5} for case $n=5$; the $5$ consecutive zero determinants are $\De_{31}$, $\De_{32}$, $\De_{33}$, $\De_{34}$ and $\De_{35}$).
\end{rem}

\begin{proof} 
First assertion in Part~(1) simply results from formula \eqref{HankelHF} and from the fact that $s_{6n-5}=2n(n+1)-1$ by Lemma~\ref{value-sj}. To establish all other statements we  split the  quite long proof into three main steps: (A) first we establish Part~(1) for $0≤q≤3n-1$, then (B) we justify the symmetry relation \eqref{symetrie-De0} of Part~(2), and finally (C) we will use this symmetry to prove Part~(1) in the remaining cases $3n≤q≤6n-5$. 
\smallbreak

\noindent\textbf{(A) Proof of Part~(1) for $0≤q≤3n-1$.} We argue on the congruences of $q$ modulo $3$ and take care separately of some special cases.
\smallbreak

$\bullet$ Suppose first that $q=3p+2$ with $0≤p≤n-2$ (hence $2≤q≤3n-4$). By \eqref{Hankel-prod} we must calculate
\begin{equation*}
\Delta_{s_{3p+2}}
=(-1)^{\eps_{3p+2}}(v_0 v_1\cdots v_{3p+1})^{s_{3p+2}}\,v_1^{s_1}v_2^{s_2}\cdots{}v_{3p+1}^{s_{3p+1}}
\end{equation*}
where, by \eqref{value-vj}, all $v_j=-1$ except $v_0=1$, so that $\Delta_{s_{3p+2}}=(-1)^{\mu_{3p+2}}$ with
\begin{equation*}
\mu_{3p+2}
:=\eps_{3p+2}+(3p+1)s_{3p+2}+\sum_{i=1}^{3p+1}s_i.
\end{equation*}
Using Lemmas~\ref{value-sj} and \ref{value-epsj} we  compute 
\begin{equation*}
\textstyle
\mu_{3p+2}
=\fr{(p+1)(n-p-1)(n-2)}{2}+\fr{p(p-1)(p+1)}{3}+(3p+1)(p+1)n+(p+1)\Bigl(1+\fr{p(3n+2)}{2}\Bigr)
\end{equation*}
and easily check its parity to establish the first case in \eqref{Des3p+2}.
\smallbreak

$\bullet$ Suppose that $q=3p+1$ with $0≤p≤n-2$ (hence $1≤q≤3n-5$). Let us remark that, by \eqref{Hankel-prod}, the following relation between two consecutive nonzero determinants $\De_{s_q}$ always holds:
\begin{equation}
\label{Deltasjsj+1}
\forall q≥0,\quad \De_{s_{q+1}}=\De_{s_q}(-1)^{\fr{k_q(k_q+1)}{2}}\bigl(v_0 v_1\cdots v_q\bigr)^{k_q+1}.
\end{equation}
In particular, 
\begin{align*}
\De_{s_{3p+2}}&=\De_{s_{3p+1}}(-1)^{\fr{k_{3p+1}(k_{3p+1}+1)}{2}}\bigl(v_0 v_1\cdots v_{3p+1}\bigr)^{k_{3p+1}+1}\\
&=(-1)^{(p+1)\fr{n(n-1)}{2}}(-1)^{\fr{(n-2-p)(n-1-p)}{2}}(-1)^{(3p+1)(n-1-p)}.
\end{align*}
by using \eqref{Des3p+2}, \eqref{value-kj} and \eqref{value-vj}. An easy calculation gives \eqref{Des3p+1} for $0≤p≤n-2$.
\smallbreak

$\bullet$ Suppose that $q=3p$ with $1≤p≤n-1$ (hence $3≤q≤3n-3$). Using \eqref{Deltasjsj+1},  \eqref{value-kj} and \eqref{value-vj}, we have
\begin{equation*}
\De_{s_{3p}}=\De_{s_{3(p-1)+2}}(-1)^{\fr{p(p-1)}{2}}(-1)^{p(3p-1)}.
\end{equation*}
Applying the first item above (since $0≤p-1≤n-2$) we obtain the first line in \eqref{Des3p}.
\smallbreak

$\bullet$ We are left with  three special cases: $q=0$, $q=3n-2$ and $q=3n-1$. For the first one, we see that the convention $\De_{s_0}=\De_0=1$ is coherent with formula \eqref{Des3p} when $p=0$. To obtain $\De_{s_{3n-2}}$ we apply \eqref{Deltasjsj+1} because the case $q=3n-3$ is already known, and then, the value of $\De_{s_{3n-1}}$ is obtained in the same way from the one of $\De_{s_{3n-2}}$.
\smallbreak

\noindent\textbf{(B) Proof of the symmetry formula \eqref{symetrie-De0}.} The starting point is to note that this formula is equivalent to the following one
\begin{equation}\label{symetrie-De0-bis}
\De_{s_q}=(-1)^{\fr{n(n+1)}{2}}\De_{s_{6n-1-q}}\quad \text{for }3n≤q≤6n-1.
\end{equation}
Indeed, $\De_j$ is nonzero if and only if $j=s_q$ for some $q≥0$, and
\begin{equation*}
(2n+1)(n+1)-s_q=s_{6n-1}-s_q=s_{6n-1-q}
\end{equation*}
by \eqref{sym2-s}. Therefore \eqref{symetrie-De0} is equivalent to \eqref{symetrie-De0-bis} for $0≤q≤6n-1$, and obviously it suffices to check the equality when $3n≤q≤6n-1$.

To do so, let us compare $\De_{s_q}$ and $\De_{s_{6n-1-q}}$ by using \eqref{Hankel-prod}. Actually we must treat the case $q=6n-1$ apart, since \eqref{Hankel-prod} does not make sense for $\De_{s_0}$. But we can check directly  relation \eqref{symetrie-De0-bis} by using Proposition~\ref{Desp+6n-4} and formula~\eqref{Des3p}:
\begin{equation*}
\De_{s_{6n-1}}=(-1)^n\De_{s_3}=(-1)^n(-1)^{\fr{n(n-1)}{2}}=(-1)^{\fr{n(n+1)}{2}}=(-1)^{\fr{n(n+1)}{2}}\De_{s_0}.
\end{equation*}
Now, let us continue our calculations in the generic case $3n≤q≤6n-2$ and use \eqref{Hankel-prod} for the Hankel determinants: 
\begin{equation*}
\fr{\De_{s_q}}{\De_{s_{6n-1-q}}}
=\fr{(-1)^{\eps_q}\bigl(v_0 v_1\cdots v_{q-1}\bigr)^{s_q}\,\bigl(v_1^{s_1}v_2^{s_2}\cdots v_{q-1}^{s_{q-1}}\bigr)}
{(-1)^{\eps_{6n-1-q}}\bigl(v_0 v_1\cdots v_{6n-2-q}\bigr)^{s_{6n-1-q}}\,\bigl(v_1^{s_1}v_2^{s_2}\cdots v_{6n-2-q}^{s_{6n-2-q}}\bigr)}.
\end{equation*}
Under our assumption we have $6n-1-q≤3n-1≤q-1$. Thus some products are canceling and we further simplify this expression by using successively \eqref{sym2-eps}, the fact that $v_j\in\{\pm 1\}$ and \eqref{sym2-s}:
\begin{align*}
\fr{\De_{s_q}}{\De_{s_{6n-1-q}}}
&=(-1)^{\eps_{6n-1}}\bigl(v_0 v_1\cdots v_{6n-2-q}\bigr)^{s_q-s_{6n-1-q}}
\bigl(v_{6n-1-q}\cdots v_{q-1}\bigr)^{s_q}
\bigl(v_{6n-1-q}^{s_{6n-1-q}}\cdots v_{q-1}^{s_{q-1}}\bigr)
\\
&=(-1)^{\eps_{6n-1}}\bigl(v_0 v_1\cdots v_{6n-2-q}\bigr)^{s_q+s_{6n-1-q}}
\bigl(v_{6n-1-q}\cdots v_{q-1}\bigr)^{s_q}
\bigl(v_{6n-1-q}^{s_{6n-1-q}}\cdots v_{q-1}^{s_{q-1}}\bigr)
\\
&=(-1)^{\eps_{6n-1}}\bigl(v_0 v_1\cdots v_{6n-2-q}\bigr)^{s_{6n-1}}
\bigl(v_{6n-1-q}\cdots v_{q-1}\bigr)^{s_q}
\bigl(v_{6n-1-q}^{s_{6n-1-q}}\cdots v_{q-1}^{s_{q-1}}\bigr).
\end{align*}
Recall from \eqref{value-vj} that all $v_j$'s involved in this formula are equal to $-1$ except  $v_0=v_{3n-2}=v_{3n+1}=1$. Hence
\begin{equation*}
\bigl(v_0 v_1\cdots v_{6n-2-q}\bigr)^{s_{6n-1}}=(-1)^{(6n-2-q)s_{6n-1}}=(-1)^{q s_{6n-1}}
\end{equation*}
because $6n-2-q<3n-2$.
Moreover, by \eqref{sym2-eps} we have
\begin{equation*}
(-1)^{\eps_{6n-1}}=(-1)^{\fr{n(n+1)(2n+1)}{6}}=(-1)^{\sum_{k=1}^n k^2}=(-1)^{\sum_{k=1}^n k}=(-1)^{\fr{n(n+1)}{2}}.
\end{equation*}
Thus we have proved so far that
\begin{equation}\label{xoen}
\fr{\De_{s_q}}{\De_{s_{6n-1-q}}}
=(-1)^{\fr{n(n+1)}{2}}(-1)^{q s_{6n-1}} 
\bigl(v_{6n-1-q}\cdots v_{q-1}\bigr)^{s_q}
\bigl(v_{6n-1-q}^{s_{6n-1-q}}\cdots v_{q-1}^{s_{q-1}}\bigr).
\end{equation}
To proceed we must distinguish between several cases.
\smallbreak

$\bullet$ Suppose first that $q≥3n+2$. On the one hand,
\begin{equation*}
\bigl(v_{6n-1-q}\cdots v_{q-1}\bigr)^{s_q}
=(-1)^{(2q-6n-1)s_q}=(-1)^{s_q}
\end{equation*}
because  in this product all terms but two equal $-1$.
On the other hand, 
\begin{equation*}
v_{6n-1-q}^{s_{6n-1-q}}\cdots v_{q-1}^{s_{q-1}}=(-1)^{\al_q}
\end{equation*}
with
\begin{align*}
\al_q&:=s_{6n-1-q}+\cdots +s_{q-1}-s_{3n-2}-s_{3n+1}\\
&=\bigl(s_{6n-1-q}+\cdots +s_{3n-3}+s_{3n-1}+s_{3n}+s_{3n+2}+\cdots +s_{q-1}+s_q\bigr)-s_q.
\end{align*}
The reason to write last expression is that, inside the parenthesis, all terms are pairing to form sums of the type $s_k+s_{6n-1-k}$ which all equal $s_{6n-1}$ by \eqref{sym2-s}. There are $q-3n$ such pairings, so that
\begin{equation*}
v_{6n-1-q}^{s_{6n-1-q}}\cdots v_{q-1}^{s_{q-1}}=(-1)^{(q-3n)s_{6n-1}}(-1)^{s_q}.
\end{equation*}
Finally we obtain that
\begin{equation*}
\fr{\De_{s_q}}{\De_{s_{6n-1-q}}}
=(-1)^{\fr{n(n+1)}{2}}(-1)^{q s_{6n-1}}(-1)^{s_q}
(-1)^{(q-3n)s_{6n-1}}(-1)^{s_q}
=(-1)^{\fr{n(n+1)}{2}}(-1)^{n s_{6n-1}}
\end{equation*}
with $s_{6n-1}=(n+1)(2n+1)$, hence \eqref{symetrie-De0-bis}.
\smallbreak

$\bullet$ Suppose that $q=3n+1$. Then
\begin{equation*}
\bigl(v_{6n-1-q}\cdots v_{q-1}\bigr)^{s_q}
=\bigl(v_{3n-2}v_{3n-1} v_{3n}\bigr)^{s_{3n+1}}
=1
\end{equation*}
and
\begin{equation*}
v_{6n-1-q}^{s_{6n-1-q}}\cdots v_{q-1}^{s_{q-1}}
=v_{3n-2}^{s_{3n-2}}v_{3n-1}^{s_{3n-1}} v_{3n}^{s_{3n}}
=(-1)^{s_{3n-1}+s_{3n}}=(-1)^{s_{6n-1}}.
\end{equation*}
Thus \eqref{xoen} yields
\begin{equation*}
\fr{\De_{s_{3n+1}}}{\De_{s_{3n-2}}}
=(-1)^{\fr{n(n+1)}{2}}(-1)^{(3n+1) s_{6n-1}}(-1)^{s_{6n-1}}
=(-1)^{\fr{n(n+1)}{2}}(-1)^{n s_{6n-1}}
\end{equation*}
with $s_{6n-1}=(n+1)(2n+1)$, hence \eqref{symetrie-De0-bis} when $q=3n+1$.
\smallbreak

$\bullet$ When $q=3n$, the proof is similar to the preceding case.
\smallbreak

\noindent\textbf{(C) Proof of Part~(1) for $3n≤q≤6n-5$.} We now apply  symmetry \eqref{symetrie-De0} to the formulas of Part~(1) already proved for $0≤q≤3n-1$ to deduce the remaining formulas, when $3n≤q≤6n-5$.
\smallbreak

$\bullet$ Suppose that $q=3p$ with $n+1≤p≤2n-2$ (hence $3n+3≤q≤6n-6$). By \eqref{symetrie-De0},
\begin{equation*}
\De_{s_{3p}}=(-1)^{\fr{n(n+1)}{2}}\De_{s_{3(2n-p-1)+2}}
\end{equation*}
with $0≤2n-p-1≤n-2$. Hence, by \eqref{Des3p+2} for Case~(A) we have
\begin{equation*}
\De_{s_{3p}}
=(-1)^{\fr{n(n+1)}{2}}(-1)^{(2n-p)\fr{n(n-1)}{2}}
=(-1)^{\fr{n(n+1)}{2}}(-1)^{p\fr{n(n-1)}{2}}
=(-1)^{(p+1)\fr{n(n-1)}{2}}(-1)^n,
\end{equation*}
as required.
\smallbreak

$\bullet$ Suppose that $q=3p+1$ with $n≤p≤2n-2$ (hence $3n+1≤q≤6n-5$). By \eqref{symetrie-De0},
\begin{equation*}
\De_{s_{3p+1}}=(-1)^{\fr{n(n+1)}{2}}\De_{s_{3(2n-p-1)+1}}
\end{equation*}
with $1≤2n-p-1≤n-2$. Hence, by \eqref{Des3p+1} for Case~(A),
\begin{align*}
\De_{s_{3p+1}}
&=(-1)^{\fr{n(n+1)}{2}}(-1)^{(2n-p-1)\fr{n(n-1)}{2}}(-1)^{\fr{(2n-p-1)(2n-p)}2}\\
&=(-1)^{\fr{n(n+1)}{2}}(-1)^{(p+1)\fr{n(n-1)}{2}}(-1)^{-n+\fr{p(p+1)}2}\\
&=(-1)^{p\fr{n(n-1)}{2}}(-1)^{\fr{p(p+1)}2}
\end{align*}
which is again \eqref{Des3p+1}.
\smallbreak

$\bullet$ Suppose that $q=3p+2$ with $n≤p≤2n-3$ (hence $3n+2≤q≤6n-7$). This case is similar to the previous one and we skip details.
\smallbreak

$\bullet$ The only remaining case is $q=3n$, which follows immediately from \eqref{symetrie-De0} and from the value of $\De_{s_{3n-1}}$ given by the second row in \eqref{Des3p+2}.
\end{proof}

%%%%%%%%%%%%%%%%%%%%%%%%
%%%%%%%%%%%%%%%%%%%%%%%%
\section{The Gale-Robinson relations}\label{sec-GR}
%%%%%%%%%%%%%%%%%%%%%%%%
%%%%%%%%%%%%%%%%%%%%%%%%

Assume as usual that $n≥3$. For any integer $j≥0$ and any $\ell\in\{0,1,\ldots,n+1\}$, set
\begin{equation*}
\Ga_j^{(\ell)}:=\De_j^{(\ell)}\, \De_{j+2n+2}^{(\ell)} - \De_{j+1}^{(\ell)}\,\De_{j+2n+1}^{(\ell)} + \bigl(\De_{j+n+1}^{(\ell)}\bigr)^2.
\end{equation*}
We want to prove the Gale-Robinson recurrence stated in \cref{MainGR}, which reads
\begin{equation}\label{GRbis}
\Ga_j^{(\ell)}=0\qquad \text{for all $j≥0$ and all $\ell\in\{0,1,\ldots,n+1\}$}.
\end{equation}
Assuming the validity of \cref{MainContiguity} (to be proved in \cref{section-shifts}) it is easily checked that
\begin{equation*}
\Ga_j^{(\ell+1)}=\Ga_j^{(\ell)},
\end{equation*}
so that it suffices to prove \eqref{GRbis} in the case $\ell=0$, and this will be our purpose in this section. 

For simplicity we put $\Ga_j:=\Ga_j^{(0)}$. 
Because of the (anti-)periodicity property  shown in \cref{MainValPer}, it remains to show that 
\begin{equation}\label{GR2}
\Ga_j=0\quad\text{for }0≤j≤2n(n+1)-1,
\end{equation}
where
\begin{equation*}
\Ga_j:=\De_j\, \De_{j+2n+2} - \De_{j+1}\,\De_{j+2n+1} + \bigl(\De_{j+n+1}\bigr)^2.
\end{equation*}
By \cref{value-sj} we know which Hankel determinants are nonzero and, by the previous theorem, we know the explicit values of these nonzero determinants, so we just have to check that $\Ga_j$  vanishes in any case. Adding and multiplying five Hankel determinants $\De_i$ which can be either $-1$, $0$ or $1$ is certainly a triviality but the difficulty comes from the fact that we have to handle them simultaneously, and there is no clue of how their values are connected in general. In fact, we will distinguish four cases, looking at pairs of successive determinants $(\De_j,\De_{j+1})$ and discussing on the  nullity of one of them, in order to simplify the expression of $\Ga_j$ when possible. And, because \eqref{GR2} must be checked for $j≤2n(n+1)-1$ we will have to discuss on Hankel determinants $\De_i$ with  index 
$$i=j+2n+2≤2n(n+1)-1+2n+2=2n(n+2)+1,$$
i.e. we need the description of the intersection $\cS\cap\{0,1,\ldots,2n(n+2)+1\}$. One the one hand, Lemma~\ref{value-sj} implies the characterization:
\begin{equation}\label{caracS}
j\in\cS\text{ and }0≤j≤2n(n+1)\iff j\text{ satisfies one of the conditions (i)-(v)}
\end{equation}
where
\begin{enumerate}[label=(\roman*)]
\item $j\equiv 0\mod {n+1}$ and $0≤j≤2n(n+1)$
\item $j\equiv 1\mod {n+1}$ and $ 1≤j≤n^2$
\item $j\equiv n\mod {n+1}$ and $n+(n+1)^2≤j≤n+(2n-1)(n+1)$
\item $j\equiv 0\mod {n}$ and $n≤j≤(n-1)n$
\item $j\equiv 1\mod {n}$ and $1+(n+4)n≤j≤1+(2n+1)n$.
\end{enumerate}
On the other hand, the $j\in\cS$ such that $2n(n+1)≤j≤2n(n+2)+1$  can be found by Lemma~\ref{sp+6n-4}: they are such that $j-2n(n+1)\in\cS$ and characterization \eqref{caracS} can be applied to these translated numbers.

\begin{ex} When $n=5$, according to \eqref{caracS} we must determine the $j\in\cS$ such that $0≤j≤60$. By the results of \cref{value-sj} there are $27$ of them, equally divided into three subsets:
\begin{align*}
\cS_0&:=\{s_{3p},\ 0≤p≤8\}=\{0,6,12,18,24,36,46,51,56\},\\
\cS_1&:=\{s_{3p+1},\ 0≤p≤8\}=\{1,7,13,19,25,41,47,53,59\},\\
\cS_2&:=\{s_{3p+2},\ 0≤p≤8\}=\{5,10,15,20,30,42,48,54,60\},
\end{align*}
and symmetries read  as follows:
\begin{equation*}
s_{j+16}=s_j+41\quad \text{for }0≤j≤13,\qquad s_j+s_{29-j}=66\quad\text{for }0≤j≤29.
\end{equation*}
One can visualize the set $\cS\cap\{0,1,\ldots,60\}=\cS_0\cup\cS_1\cup\cS_2$ and all the symmetries on \cref{fig:S}. 
\end{ex}
\begin{figure}
  \includegraphics[width=\linewidth]{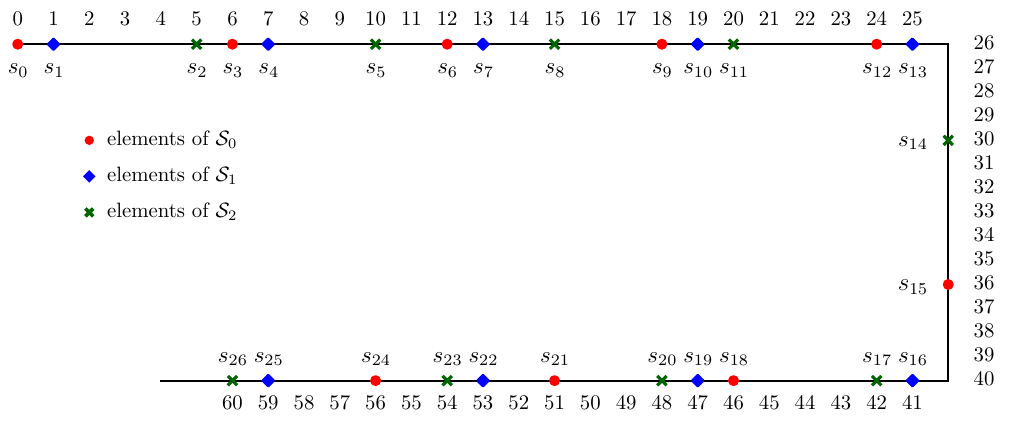}
  \caption{The set $\cS\cap\{0,1,\ldots,2n(n+1)\}=\cS_0\cup\cS_1\cup\cS_2$ when $n=5$.}
  \label{fig:S}
\end{figure}

Now, let us begin the proof of the Gale-Robinson recurrences \eqref{GR2}. Computations are tedious, but easy and similar in the first three cases we will look at. Therefore, full details will be provided in the first and fourth cases only.
\smallbreak

\noindent\textbf{Case 1: $\De_j\neq 0$ and $\De_{j+1}\neq 0$.} According to the previous discussion  this situation corresponds to the following five possibilities:
\begin{enumerate}[label=(\alph*)]
\item $j=n=s_2$,
\item $j=p(n+1)=s_{3p}$ for $0≤p≤n-1$,
\item $j=n^2-n-1=s_{3n-5}$,
\item $j=n^2+4n+1=s_{3n+3}$,
\item $j=n+(p+1)(n+1)=s_{3p+1}$ for $n≤p≤2n-2$,
\end{enumerate}
and we go for a case-by-case examination.

\begin{enumerate}[label=(\alph*), wide]
\item If $j=n=s_2$ then
 $j+1=n+1=s_3$, $j+n+1=2(n+1)=s_6$, $j+2n+2=3(n+1)=s_9$ and $j+2n+1=3n+2\notin\cS$, so that, \begin{equation*}
\Ga_j=\De_{s_2}\De_{s_9}+\De_{s_6}^2=(-1)^{\fr{n(n-1)}{2}}(-1)^{\fr{3n(n-1)}{2}}(-1)^3+1=0.
\end{equation*}
\item Suppose that $j=p(n+1)=s_{3p}$ for $0≤p≤n-1$.  
\begin{itemize}[wide]
\item We look first at the generic situation $0≤p≤n-3$. Then $j+1=p(n+1)+1=s_{3p+1}$, $j+n+1=(p+1)(n+1)=s_{3(p+1)}$, $j+2n+2=(p+2)(n+1)=s_{3(p+2)}$ and $j+2n+1=n+(p+1)(n+1)\notin\cS$. Thus
\begin{equation*}
\Ga_j=\De_{s_{3p}}\De_{s_{3(p+2)}}+\De_{s_{3(p+1)}}^2
=(-1)^{(p+2)\fr{n(n-1)}{2}}(-1)^{\fr{(p+2)(p+1)}2}
(-1)^{p\fr{n(n-1)}{2}}(-1)^{\fr{p(p-1)}2}
+1=0.
\end{equation*}
\item If $j=(n-2)(n+1)=s_{3(n-2)}$, then we have $j+1=s_{3(n-2)+1}$, $j+n+1=s_{3(n-1)}$, $j+2n+2=s_{3(n-1)+2}$ and $j+2n+1=n+(p+1)(n+1)\notin\cS$. Thus
\begin{equation*}
\Ga_j=\De_{s_{3(n-2)}}\De_{s_{3(n-1)+2)}}+\De_{s_{3(n-1)}}^2
=(-1)^{(n-2)\fr{n(n-1)}{2}}(-1)^{\fr{(n-2)(n-3)}2}
(-1)^{\fr{n(n-1)^2}2}
+1=0.
\end{equation*}
\item If $j=(n-1)(n+1)=s_{3n-3}$, then $j+1=s_{3n-2}$, $j+n+1=s_{3n-1}$, $j+2n+2=s_{3n}$ and $j+2n+1=n^2+2n\notin\cS$. Hence
\begin{equation*}
\Ga_j=\De_{s_{3n-3}}\De_{s_{3n}}+\De_{s_{3n-1}}^2
=(-1)^{\fr{n(n-1)^2}{2}}(-1)^{\fr{(n-1)(n-2)}2}
(-1)^{\fr{n(n+1)(n+2)}2}
+1=0.
\end{equation*}
\end{itemize} 
\item When $j=n^2-n-1=s_{3n-5}$ we have $j+1=s_{3n-4}$, $j+n+1=s_{3n-2}$, $j+2n+1=s_{3n-1}$ and $j+2n+2=n^2+n+1\notin\cS$, so that
\begin{equation*}
\Ga_j=-\De_{s_{3n-4}}\De_{s_{3n-1}}-\De_{s_{3n-1}}^2
=(-1)^{\fr{n(n-1)^2}{2}}(-1)^{\fr{n(n-1)^2}{2}}-1
=0.
\end{equation*}
\item If $j=n^2+4n+1=s_{3n+3}$, then $j+1=s_{3n+4}$ but $j+n+1$, $j+2n+1$ and $j+2n+2$ are not in $\cS$ and give zero determinants. Trivially, $\Ga_j=0$ as well.
\item Assume that $j=n+(p+1)(n+1)=s_{3p+1}$ for $n≤p≤2n-2$.
\begin{itemize}[wide]
\item We suppose first that  $n≤p≤2n-4$. In this case, $j+2n+2≤n+(2n-1)(n+1)<2n(n+1)$ so that all indices remain inside the first (anti-)period: we compute as before $j+1=s_{3p+2}$, $j+n+1=s_{3(p+1)+1}$, $j+2n+2=s_{3(p+2)+1}$ and observe that $j+2n+1=2n+(p+2)(n+1)\notin \cS$ (look at congruences modulo $n$ and $n+1$). Thus
\begin{align*}
\Ga_j&=\De_{s_{3p+1}}\De_{s_{3(p+2)+2)}}+\De_{s_{3(p+1)+1}}^2\\
&=(-1)^{p\fr{n(n-1)}{2}}(-1)^{\fr{p(p+1)}2}
(-1)^{(p+2)\fr{n(n-1)}{2}}(-1)^{\fr{(p+2)(p+3)}2}+1\\
&=0.
\end{align*}
\item Suppose that $p=2n-3$. Then $j+1=s_{3(2n-3)+2}$, $j+n+1=s_{3(2n-2)+1}$,   $j+2n+1=n+2n(n+1)$ and $j+2n+2=n+2n(n+1)$. This time, the last two indices exceed the (anti-)period length $2n(n+1)$, so we use Lemma~\ref{sp+6n-4} to see that $j+2n+1\notin \cS$ and $j+2n+2=s_2+2n(n+1)$. For the latter,  the corresponding determinant is  calculated with property \eqref{per-De0}:
\begin{equation*}
\De_{n+2n(n+1)}=(-1)^n\De_n=(-1)^n\De_{s_2}.
\end{equation*}
This yields 
\begin{equation*}
\Ga_j=\De_{s_{3(2n-3)+2}}(-1)^n\De_{s_2}+1
=(-1)^{(2n-3)\fr{n(n-1)}{2}}(-1)^{\fr{(2n-3)(2n-2)}2}(-1)^n(-1)^{\fr{n(n-1)}{2}}+1=0.
\end{equation*}
\item Suppose now that $p=2n-2$. Then $j+1=s_{3(2n-2)+2}$ and all other indices exceed the (anti-)period length: $j+n+1=n+2n(n+1)$,  $j+2n+1=2n+2n(n+1)$ and $j+2n+2=2n+1+2n(n+1)$. Using Lemma~\ref{sp+6n-4} and \eqref{per-De0} we get:
\begin{align*}
\De_{n+2n(n+1)}&=(-1)^n\De_n=(-1)^n\De_{s_2},\\
\De_{2n+2n(n+1)}&=(-1)^n\De_{2n}=(-1)^n\De_{s_5},\\
\De_{2n+1+2n(n+1)}&=(-1)^n\De_{2n+1}=0.
\end{align*}
Finally, we find again that
\begin{equation*}
\Ga_j=-\De_{s_{3(2n-2)+2}}(-1)^n\De_{s_5}+1
=-(-1)^n (-1)^n (-1)^{n(n-1)}+1=0.
\end{equation*}
and this completes the proof in Case~1. 
\end{itemize}
\end{enumerate}

\noindent\textbf{Case 2: $\De_j\neq 0$ and $\De_{j+1}= 0$.} Since  verifications are similar to Case~1 we will only provide the possible values of the indices $j,j+1,j+n+1,j+2n+1,j+2n+2$; then, using Theorem~\ref{thm-Hankel0} it is  easy to check that one always has $\Ga_j=0$. 

Let us describe the situation. According to \eqref{caracS} one has seven possibilities for $j$ which are listed below. In each case we indicate the corresponding values only for indices $j+n+1$ and $j+2n+2$: the ones for $j+1$ and $j+2n+1$ are useless since by hypothesis $\De_{j+1}\De_{j+2n+1}=0$.
\begin{enumerate}[label=(\alph*)]
\item $j=1+p(n+1)=s_{3p+1}$  for $0≤p≤n-3$. Then $j+n+1=s_{3(p+1)+1}$ and $j+2n+2=s_{3(p+2)+1}$.
\item $j=(p+1)n=s_{3p+2}$ for $1≤p≤n-2$. Then $j+n+1=(p+2)n+1\notin\cS$ and $j+2n+2=(p+3)n+2\notin\cS$.
\item $j=n^2=s_{3(n-1)+1}$. Then $j+n+1=n^2+n+1\notin\cS$ and $j+2n+2=n^2+2n+2\notin\cS$.
\item $j=n(n+1)=s_{3(n-1)+2}$. Then $j+n+1=s_{3n}$ and $j+2n+2s_{3n+2}$.
\item $j=(n+1)^2=s_{3n}$. Then $j+n+1=s_{3n+2}$ and $j+2n+2=s_{3(n+1)+2}$.
\item $j=(p+2)(n+1)=s_{3p+2}$ for $n≤p≤2n-3$. Then $j+n+1=s_{3(p+1)+2}$ and $j+2n+2=s_{3(p+2)+2}$. 
\item $j=1+(p+3)n=s_{3p}$ for $n+2≤p≤2n-2$. Then $j+n+1=2+(p+4)n\notin\cS$ and $j+2n+2=3+(p+5)n\notin\cS$.
\end{enumerate}
N.B. For subcases~(f) and (g), one must use Lemma~\ref{sp+6n-4} and property \eqref{per-De0} when indices are $≥2n(n+1)$. Subcase~(g) can only occur when $n≥4$.
\smallbreak

\noindent\textbf{Case 3: $\De_j= 0$ and $\De_{j+1}\neq 0$.} Again, the situation is very similar to the first ones. With \eqref{caracS} we see that this case corresponds to  seven possibilities for $j+1$. Since $\De_j\De_{j+2n+2}$ always vanishes by hypothesis, we give only the corresponding values of $j+n+1$ and $j+2n+1$.
\begin{enumerate}[label=(\alph*)]
\item $j+1=(p+1)n=s_{3p+2}$ for $0≤p≤n-3$. Then $j+n+1=s_{3(p+1)+2}$, and $j+2n+1=s_{3(p+2)+2}$ except if $p=n-3$ in which case $j+2n+1=n^2=s_{3(n-1)+1}$.
\item $j+1=p(n+1)=s_{3p}$  for $2≤p≤n-1$. Then $j+n+1=(p+1)n+p\notin\cS$ and $j+2n+1=(p+2)n+p\notin\cS$.
\item $j+1=n(n+1)=s_{3(n-1)+2}$. Then $j+n+1=n(n+2)\notin\cS$ and $j+2n+1=n(n+3)\notin\cS$.
\item $j+1=(n+1)^2=s_{3n}$. Then $j+n+1=(n+1)^2+n=s_{3n+1}$ and $j+2n+1=1+n(n+4)=s_{3(n+1)}$.
\item $j+1=(n+1)^2+n=s_{3n+1}$. Then $j+n+1=1+n(n+4)=s_{3(n+1)}$ and $j+2n+1=1+n(n+5)=s_{3(n+2)}$.
\item $j+1=1+(p+3)n=s_{3p}$ for $n+1≤p≤2n-2$. Then $j+n+1=1+(p+4)n=s_{3(p+1)}$ and $j+2n+1=1+(p+5)n=s_{3(p+2)}$.
\item $j+1=n+(p+1)(n+1)=s_{3p+1}$ for $n+2≤p≤2n-2$. Then $j+n+1=2n+(p+1)(n+1)\notin\cS$ and $j+2n+1=3n+(p+1)(n+1)\notin\cS$.
\end{enumerate}
N.B. For subcases~(f) and (g), one must use Lemma~\ref{sp+6n-4} and property \eqref{per-De0} when indices are $≥2n(n+1)$. Subcase~(g) can only occur when $n≥4$.
\smallbreak

\noindent\textbf{Case 4: $\De_j= 0=\De_{j+1}$.} In this case, the Gale-Robinson recurrence \eqref{GR2} will be true if and only if $\De_{j+n+1}$ always vanishes. Assume on the contrary that $\De_{j+n+1}\neq 0$, i.e. $j+n+1\in\cS$. We shall use again \eqref{caracS} and distinguish four subcases.
\begin{enumerate}[label=(\alph*),wide]
\item Suppose that $j+n+1≤s_{3n}=(n+1)^2$, i.e. $j≤n(n+1)=s_{3n-1}$. According to \eqref{caracS}, we must have
\begin{equation*}
j+n+1\equiv 0\mod{n+1}\quad\text{or}\quad j+n+1\equiv 1\mod{n+1}
\quad\text{or}\quad j+n+1\equiv 0\mod{ n}
\end{equation*}
which amounts to say that $j$ fullfills (i)  or (ii) in \eqref{caracS} (first two cases) or that $j+1$ satisfies (iv) in \eqref{caracS} (last case). This implies in return that $j\in\cS$ or $j+1\in\cS$,  i.e. $\De_j\neq 0$ or $\De_{j+1}\neq 0$, which is a contradiction.
\item Suppose that $j+n+1$ is one of the following three numbers:
\begin{equation*}
s_{3n+1}=n+(n+1)^2,\quad s_{3n+2}=(n+2)(n+1),\quad s_{3n+3}=1+(n+4)n.
\end{equation*} 
In the first case, we have $j+1=(n+1)^2\in\cS$ by \eqref{caracS}, hence $\De_{j+1}\neq 0$, contradiction. In the second case, we have $j=(n+1)^2\in\cS$ and $\De_j\neq 0$, also a contradiction. In the third case we have $j+1=n+(n+1)^2\in\cS$  and $\De_{j+1}\neq 0$, still a contradiction.
\item Suppose now that $n+(n+1)(n+2)=s_{3n+4}≤j+n+1≤s_{6n-4}=2n(n+1)$. Then $j+n+1$ satisfies conditions (i) or (iii) or (v) of \eqref{caracS}, which implies that either $j$ satisfies (i) or $j+1$ satisfies (i) or $j+1$ satisfies (v). This leads in all cases to $\De_j=0$ or $\De_{j+1}=0$, a contradiction.
\item To finish the proof, suppose that $j+n+1≥s_{6n-3}=2n^2+2n+1$ (see \eqref{s6n-3}). Since $j≤2n(n+1)-1$ one also has $j+n+1≤(2n+1)(n+1)-1=2n^2+3n=s_{6n-2}$ by \eqref{s6n-2}. In other words, $j+n+1\in\{s_{6n-2},s_{6n-3}\}=\{2n^2+2n+1,2n^2+3n\}$. Therefore, either $j=2n^2+n+1=s_{6n-6}$ or $j=2n^2+2n-1=s_{6n-5}$. In both cases $\De_j\neq 0$, contradiction.
\end{enumerate}
\smallbreak
We thus have completely established \cref{MainGR} in case $\ell=0$.

%%%%%%%%%%%%%%%%%%%%%%%%
%%%%%%%%%%%%%%%%%%%%%%%%
\section{The shifted case}
\label{section-shifts}
%%%%%%%%%%%%%%%%%%%%%%%%
%%%%%%%%%%%%%%%%%%%%%%%%

In this section we give a proof of \cref{MainContiguity,MainModulop}. Let us recall that the contiguity relations \eqref{contiguity} contained in \cref{MainContiguity} allow us to reduce the proofs of \cref{MainValPer,MainGR} to the case $\ell=0$ that has been treated in the previous sections. 

%%%%%%%%%%%%%%%%%%%%%%%%
\subsection{The $H$-fraction expansion for $\Phi_n^{(\ell)}$ when $\ell\in\{0,1,\ldots,n+1\}$}
%%%%%%%%%%%%%%%%%%%%%%%%

The contiguity relations \eqref{contiguity} stress the links which exist between  Hankel determinants  
\begin{equation*}
\De_j^{(\ell-1)}=\De_j^{(\ell-1)}(\Phi_n)=\De_j\bigl(\Phi_n^{(\ell-1)}\bigl)
\quad\text{and}\quad
\De_j^{(\ell)}=\De_j^{(\ell)}(\Phi_n)=\De_j\bigl(\Phi_n^{(\ell)}\bigl)
\end{equation*} 
of two consecutive shifts $\Phi_n^{(\ell-1)}$ and $\Phi_n^{(\ell)}$ of $\Phi_n$. To establish these relations we will first state the analogue of \cref{MainHF} for the shifted function $\Phi_n^{(\ell)}$.

\begin{lem}\label{lemma:cut}
Suppose that
$$F(q) = f_0 + f_1 q + f_2 q^2 + \cdots$$
is a quadratic power series satisfying the equation
\begin{equation}\label{EFQ}
A + B F + C F^2 =0
\end{equation}
where $A,B,C$ are polynomials (with $C≠0$). Then the shifted series
$$F^{(1)}(q) = f_1  + f_2 q + \cdots = \fr{F(q) -f_0}{q} $$
is also quadratic, satisfying the equation
$$
	A' + B' F^{(1)} + C' \bigl(F^{(1)}\bigr)^2 =0
$$
with $A',B',C'$  polynomials given by
\begin{align*}
A' &=\fr 1{q}(A+f_0B + f_0^2 C),\\
B'&=B + 2f_0C,\\
C'&=q C.
\end{align*}
\end{lem}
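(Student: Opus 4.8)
The plan is to substitute the defining relation $F = f_0 + q\,F^{(1)}$ directly into the quadratic equation \eqref{EFQ} and collect terms according to the powers of $F^{(1)}$. First I would expand
\begin{equation*}
A + B\bigl(f_0 + q F^{(1)}\bigr) + C\bigl(f_0 + q F^{(1)}\bigr)^2 = 0,
\end{equation*}
which, upon developing the square and regrouping, becomes
\begin{equation*}
\bigl(A + f_0 B + f_0^2 C\bigr) + q\bigl(B + 2 f_0 C\bigr)F^{(1)} + q^2 C\,\bigl(F^{(1)}\bigr)^2 = 0.
\end{equation*}
Dividing through by $q$ then yields exactly the announced relation $A' + B' F^{(1)} + C'\bigl(F^{(1)}\bigr)^2 = 0$ with $A' = \frac1q(A + f_0 B + f_0^2 C)$, $B' = B + 2 f_0 C$ and $C' = qC$.

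The only point requiring justification is that $A'$ is genuinely a polynomial, i.e. that $A + f_0 B + f_0^2 C$ is divisible by $q$. Here I would use that $F$ satisfies \eqref{EFQ}: evaluating that identity of power series at $q = 0$ and recalling $F(0) = f_0$ gives $A(0) + f_0 B(0) + f_0^2 C(0) = 0$, so the polynomial $A + f_0 B + f_0^2 C$ vanishes at $q = 0$ and is therefore divisible by $q$. Thus $A'$ is a polynomial, and since $C \neq 0$ we also have $C' = qC \neq 0$, so $F^{(1)}$ is again quadratic in the required sense.

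There is no real obstacle here: the argument is a one-line substitution, and the sole nontrivial ingredient is the constant-term cancellation making $A'$ polynomial, which is forced precisely by the fact that $F$ itself solves the quadratic. I would emphasize that the divisibility check is exactly the place where the hypothesis on $F$ (as opposed to an arbitrary power series) enters, since for a general triple $(A,B,C)$ the quantity $\frac1q(A + f_0 B + f_0^2 C)$ need not be regular at $q=0$.
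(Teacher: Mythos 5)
Your proposal is correct and follows essentially the same route as the paper: substitute $F = f_0 + qF^{(1)}$ into the quadratic equation, regroup by powers of $F^{(1)}$, and justify dividing by $q$ by noting that the constant term of $A + f_0 B + f_0^2 C$ equals $A(0)+B(0)F(0)+C(0)F(0)^2=0$. No differences worth noting.
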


\begin{proof} 
Inserting $F=qF^{(1)}+f_0$ in equation \eqref{EFQ} gives immediately 
\begin{equation}\label{xorbu}
A+f_0B + f_0^2 C+q(B + 2f_0C)F^{(1)}+q^2 C \bigl(F^{(1)}\bigr)^2=0.
\end{equation}
Now, the constant term in the left-hand side of \eqref{EFQ} is $A(0)+B(0)F(0)+C(0)F(0)^2$, and this is actually also the constant term of the polynomial $A+f_0B + f_0^2 C$. Thus it equals zero, and the whole left-hand side of \eqref{xorbu} can be divided by $q$.
\end{proof}

We use this lemma to derive the following generalization of \eqref{FE-Phin} which gives the quadratic equations satisfied by the shifts of $\Phi_n$.

\begin{prop}\label{lemma:ABCshift}
Let $\ell\in\{0,1,\ldots,n+1\}$. The $\ell$-shift $\Phi_n^{(\ell)}$ of the $q$-metallic number $\Phi_n$ is also a quadratic power series with integer coefficients, satisfying the equation
\begin{equation}\label{FE-Phin-shift}
A^{(\ell)} +B^{(\ell)} \Phi_n^{(\ell)} + C^{(\ell)} \bigl(\Phi_n^{(\ell)}\bigr)^2 =0,
\end{equation}
where $A^{(\ell)},B^{(\ell)},C^{(\ell)}$ are polynomials with integer coefficients given as follows: for $0\leq \ell \leq n$,
\begin{align*}
A^{(\ell)} &=\frac{  q^{\ell+1} - (q^2 - q + 1)( q^{n}- q^{n-\ell} +1)    }{ (q-1)^2},\\
B^{(\ell)} &=\frac{2q^{\ell+1} - ( q^{2} - q+1)(q^n+1)  }{q - 1},\\
C^{(\ell)} &=q^{\ell+1},
\end{align*}
and for $\ell=n+1$,
\begin{align*}
A^{(n+1)} &=-q^{n-1},\\
B^{(n+1)} &=-\frac{( q^{2} - q+1) + ( q^{2} - 3q+1)q^{n}   }{q - 1},\\
C^{(n+1)} &=q^{n+2}.
\end{align*}
\end{prop}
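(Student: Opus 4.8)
The plan is to prove \cref{lemma:ABCshift} by induction on $\ell$, using \cref{lemma:cut} as the engine. Observe first that $\Phi_n^{(\ell+1)}=\bigl(\Phi_n^{(\ell)}-f_\ell\bigr)/q$, where $f_\ell$ is the coefficient of $q^\ell$ in the Taylor expansion \eqref{SE-Phin} of $\Phi_n$; this is exactly the setting of \cref{lemma:cut} applied to the series $F=\Phi_n^{(\ell)}$, whose constant term is $f_0=f_\ell$. The base case $\ell=0$ is nothing but the functional equation \eqref{FE-Phin}: with $A=-1$, $B=(1+q^n)(1-q)-q[n]_q$ and $C=q$ one checks directly, after clearing the denominators $(q-1)$ and $(q-1)^2$, that these coincide with the claimed $A^{(0)},B^{(0)},C^{(0)}$, the identity $A^{(0)}=-1$ reducing to $(q-1)^2=(q^2-q+1)-q$.

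For the inductive step I would first read off from \eqref{SE-Phin} that $f_\ell=1$ for $0\le\ell\le n-1$ and $f_n=f_{n+1}=0$; only these values are needed since we stop at $\ell=n+1$. For $0\le\ell\le n-1$ we have $f_\ell=1$, and \cref{lemma:cut} gives the three recurrences
\begin{equation*}
C^{(\ell+1)}=q\,C^{(\ell)},\qquad
B^{(\ell+1)}=B^{(\ell)}+2C^{(\ell)},\qquad
A^{(\ell+1)}=\tfrac1q\bigl(A^{(\ell)}+B^{(\ell)}+C^{(\ell)}\bigr).
\end{equation*}
Starting from the closed forms claimed for index $\ell$, the first two are immediate: $C^{(\ell+1)}=q\cdot q^{\ell+1}=q^{\ell+2}$, and putting $B^{(\ell)}+2q^{\ell+1}$ over $q-1$ produces exactly $\frac{2q^{\ell+2}-(q^2-q+1)(q^n+1)}{q-1}$. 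The third relation is the only one requiring a genuine computation: after clearing $(q-1)^2$ one must verify the polynomial identity
\begin{equation*}
A^{(\ell)}+B^{(\ell)}+C^{(\ell)}=\frac{q^{\ell+3}-(q^2-q+1)(q^{n+1}-q^{n-\ell}+q)}{(q-1)^2},
\end{equation*}
whose numerator, divided by $q$, is precisely the numerator of $A^{(\ell+1)}$ with $\ell$ replaced by $\ell+1$. This closes the induction for $0\le\ell\le n$.

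It then remains to pass from $\ell=n$ to $\ell=n+1$, where the relevant constant term is now $f_n=0$, so \cref{lemma:cut} degenerates: $C^{(n+1)}=qC^{(n)}=q^{n+2}$, $B^{(n+1)}=B^{(n)}$, and $A^{(n+1)}=A^{(n)}/q$. Evaluating the closed form at $\ell=n$ gives $A^{(n)}=\frac{q^{n+1}-(q^2-q+1)q^n}{(q-1)^2}=-q^n$, hence $A^{(n+1)}=-q^{n-1}$, while $B^{(n)}$ rewrites as $-\frac{(q^2-q+1)+(q^2-3q+1)q^n}{q-1}$; these match the stated special formulas. Integrality of all coefficients will be automatic, since \cref{lemma:cut} guarantees that each $A^{(\ell)},B^{(\ell)},C^{(\ell)}$ is again a genuine polynomial (the division by $q$ being legitimate because the constant term of $A^{(\ell)}+f_\ell B^{(\ell)}+f_\ell^2C^{(\ell)}$ vanishes), and every $f_\ell\in\Z$.

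The main obstacle is the single polynomial identity for $A^{(\ell+1)}$ displayed above; everything else is bookkeeping. In practice I would split both sides into the ``$(q^2-q+1)$-part'' and the remaining monomials: the monomials collapse to $q^{\ell+3}$, while the factor multiplying $(q^2-q+1)$ simplifies via $(q^n+1)(q-1)+(q^n-q^{n-\ell}+1)=q^{n+1}-q^{n-\ell}+q=q(q^n-q^{n-\ell-1}+1)$, so that the expected index shift $n-\ell\mapsto n-\ell-1$ appears after dividing by $q$. I expect no conceptual difficulty beyond careful tracking of this shift.
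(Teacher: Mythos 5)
Your proposal is correct and follows essentially the same route as the paper: induction on $\ell$ with base case \eqref{FE-Phin}, the recurrences of \cref{lemma:cut} driven by the constant terms $f_\ell=1$ ($0\le\ell\le n-1$) and $f_n=0$ read off from \eqref{SE-Phin}, and a separate final step from $\ell=n$ to $\ell=n+1$. You in fact supply more detail than the paper does for the ``simple calculation'' verifying the closed form of $A^{(\ell+1)}$, and your identity $(q^n+1)(q-1)+(q^n-q^{n-\ell}+1)=q\,(q^n-q^{n-\ell-1}+1)$ checks out.
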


\begin{proof} We proceed by induction on $\ell\in\{0,\ldots,n+1\}$. 

$\bullet$ We already know that \eqref{FE-Phin-shift} is true for $\ell=0$, by \eqref{FE-Phin}. 

$\bullet$ Assume that \eqref{FE-Phin-shift} is valid for some $\ell\in\{0,\ldots,n-1\}$. By \eqref{SE-Phin}, the constant term of $\Phi_n^{(\ell)}$ equals $1$. According to \cref{lemma:cut} we have
\begin{equation*}
A^{(\ell+1)} +B^{(\ell+1)} \Phi_n^{(\ell)} + C^{(\ell+1)} \bigl(\Phi_n^{(\ell)}\bigr)^2 =0
\end{equation*}
with
\begin{equation*}
C^{(\ell+1)} =\fr 1{q}\bigl(A^{(\ell)}+B^{(\ell)} +  C^{(\ell)}\bigr),\quad
B^{(\ell+1)}=B^{(\ell)} + 2C^{(\ell)},\quad
C^{(\ell+1)}=q C^{(\ell)}.
\end{equation*}
A simple calculation ensures that these formulas for $A^{(\ell+1)},B^{(\ell+1)},C^{(\ell+1)}$  coincide actually with the expressions given in the statement, at rank $\ell+1$. 

$\bullet$ Finally, when $\ell=n$ we have
\begin{equation*}
A^{(n)} =- q^{n},\quad
B^{(n)} =-\frac{ ( q^{2} - q+1) + ( q^{2} - 3q+1)q^n }{q - 1},\quad
C^{(n)} =q^{n+1}.
\end{equation*}
We apply again \cref{lemma:cut}, but this time with $f_0=0$ since the constant term of $\Phi_n^{(n)}$ equals $0$. We easily see that $A^{(n+1)},B^{(n+1)},C^{(n+1)}$ admit the required expressions.
\end{proof}

The key point is that we have already encountered the family of polynomials $A^{(\ell)},B^{(\ell)},C^{(\ell)}$. Indeed, with notations of \cref{defi-A_1,defi-A_2} and \eqref{defQj}:
\begin{align*}
A^{(0)}&= A_1^{(1)}=A_0,&
B^{(0)}&= B_1^{(1)}=B_0,&
C^{(0)}&= C_1^{(1)}=C_0,\\
A^{(\ell)}&= -A_1^{(3\ell+1)}=-A_{3\ell}, &
B^{(\ell)}&= B_1^{(3\ell+1)}=B_{3\ell}, &
C^{(\ell)}&= -C_1^{(3\ell+1)}=-C_{3\ell},\\
&\qquad\text{for }\ell = 1, 2, \ldots, n-2,\\
A^{(n-1)}&= -A_2^{(1)}=-A_{3n-3},&
B^{(n-1)}&= B_2^{(1)}=B_{3n-3}, &
C^{(n-1)}&= -C_2^{(1)}=-C_{3n-3},\\
A^{(n)}&= -A_2^{(3)}=-A_{3n-1}, &
B^{(n)}&= B_2^{(3)}=B_{3n-1}, &
C^{(n)}&= -C_2^{(3)}=-C_{3n-1},\\
A^{(n+1)}&= -A_2^{(4)}=-A_{3n}, &
B^{(n+1)}&= B_2^{(4)}=B_{3n}, &
C^{(n+1)}&= -C_2^{(4)}=-C_{3n}.
\end{align*}
(Notice that the minus signs appearing here are due to the minus sign in the first numerator of a $H$-fraction constructed with the  algorithm \alg, see \eqref{Hfracalgo}.)
From these identities we deduce that the $H$-fraction expansions of the shifts $\Phi_n^{(\ell)}$,  $1≤\ell≤n+1$, can be borrowed from \cref{lemma:1,lemma:2} as for the case $\ell=0$. The difference lies in the initialization of algorithm \alg. Namely, denoting by
\begin{equation*}
m_\ell:=
\begin{dcases*}
3\ell&if $0≤\ell≤n-1$,\\ 3n-1&if $\ell=n$,\\ 3n&if $\ell=n+1$,
\end{dcases*}
\end{equation*} 
one must initialize \alg with the triple $(A_{m_l},B_{m_l},C_{m_l})$ to get the $H$-fraction of $\Phi_n^{(\ell)}$.  In other words, \emph{the $H$-fraction of $\Phi_n^{(\ell)}$ when $1≤\ell≤n+1$ is obtained from the one of $\Phi_n^{(0)}=\Phi_n$ by truncating the first $m_\ell$ terms, and by adjusting the power of the monomial in the first numerator:} because of \eqref{Hfracalgo}, it must be set to $k_{m_\ell}$ instead of $k_{m_{\ell-1}}+k_{m_\ell}+2$.

To sum up the discussion, we have proved the following analogue of \cref{MainHF}.

\begin{thm}
\label{HFPhishifts}
Write the $H$-fraction expansion \eqref{Hfrac-Phin} of $\Phi_n$ as
\begin{equation*}
	\Phi_n(q)= \frac{1}{\be_0}\cfp  \CF_{i=1}^{+\infty}\fr{\al_i}{\be_i}.
\end{equation*}
Then, for any $\ell\in\{1,\ldots,n+1\}$, the $H$-fraction expansion of $\Phi_n^{(\ell)}$ is given by the following formulas:
\begin{align*}
	\Phi_n^{(\ell)}(q)&=\frac{1}{\be_{3\ell}}\cfp  \CF_{i=3\ell+1}^{+\infty}\fr{\al_i}{\be_i}
	\qquad \text{for $1≤ \ell ≤ n-1$},\\
	\Phi_n^{(n)}(q)&=\frac{q^n}{\be_{3n-1}}\cfp  \CF_{i=3n}^{+\infty}\fr{\al_i}{\be_i},\\
	\Phi_n^{(n+1)}(q)&=\frac{q^{n-1}}{\be_{3n}}\cfp  \CF_{i=3n+1}^{+\infty}\fr{\al_i}{\be_i}.
\end{align*}
\end{thm}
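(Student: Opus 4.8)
The plan is to run the algorithm \alg on each shifted function $\Phi_n^{(\ell)}$, $1\le\ell\le n+1$, and to recognize that it merely reproduces a \emph{tail} of the computation already carried out for $\Phi_n=\Phi_n^{(0)}$ in \cref{section-pfD}. By \cref{lemma:ABCshift}, $\Phi_n^{(\ell)}$ is quadratic with triple $(A^{(\ell)},B^{(\ell)},C^{(\ell)})$ satisfying \eqref{condABC}, and the identities established just above the statement identify this triple, for $1\le\ell\le n+1$, with $(-A_{m_\ell},B_{m_\ell},-C_{m_\ell})$: up to sign it is exactly the triple occurring at stage $m_\ell$ of the run that produces the $H$-fraction \eqref{Hfrac-Phin} of $\Phi_n$. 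The crux is therefore to control how \alg reacts to the replacement $(A,B,C)\mapsto(-A,B,-C)$.

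The first step is a sign-invariance lemma for \cref{alg}: if $(A,B,C)$ and $(-A,B,-C)$ both satisfy \eqref{condABC} and $\alg(A,B,C)=(A^*,B^*,C^*,k,a_k,D)$, then
\[
\alg(-A,B,-C)=(A^*,B^*,C^*,k,-a_k,D).
\]
This is checked by direct inspection of the three steps. In Step~1 the valuation $k$ of $A$ is unchanged while its leading coefficient becomes $-a_k$. In Step~2 the substitutions $A\mapsto-A$, $a_k\mapsto-a_k$ and $c_1\mapsto-c_1$ cancel in pairs inside the defining relation \eqref{defD4}, so $D$ is unchanged. In Step~3 each of the three expressions defining $A^*,B^*,C^*$ is manifestly invariant under the simultaneous sign change of $A$, $C$ and $a_k$. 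Hence the sign flip only negates the scalar $a_k$ and leaves the output triple $(A^*,B^*,C^*)$ and the polynomial $D$ untouched.

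With this lemma I would argue as follows. Initializing \alg on $\Phi_n^{(\ell)}$ with $(-A_{m_\ell},B_{m_\ell},-C_{m_\ell})$, the first step returns $(k_{m_\ell},-a_{m_\ell},D_{m_\ell})$ together with the \emph{same} next triple $(A_{m_\ell+1},B_{m_\ell+1},C_{m_\ell+1})$ as in the $\Phi_n$ run; in particular the leading valuation of the shifted series is again $k_{m_\ell}$, which guarantees that the exponents match at the junction. From this point on, \cref{lemma:1,lemma:2} together with \eqref{defQj} show that the algorithm retraces exactly the sequence $(\mathcal{Q}_j)_{j>m_\ell}$, so the data $(k_j,a_j,D_j)$ for $j>m_\ell$ coincide with those of $\Phi_n$. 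Inserting these data into \eqref{Hfracalgo}, the tail becomes $\CF_{j=m_\ell+1}^{+\infty}\fr{\al_j}{\be_j}$ while the head is $\fr{-(-a_{m_\ell})q^{k_{m_\ell}}}{D_{m_\ell}}=\fr{a_{m_\ell}q^{k_{m_\ell}}}{D_{m_\ell}}$.

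It remains to read the values of $(k_{m_\ell},a_{m_\ell},D_{m_\ell})$ off \cref{defi-A_1,defi-A_2} (equivalently, \cref{table-alg}). For $1\le\ell\le n-1$ one has $m_\ell=3\ell$ with $k_{3\ell}=0$ and $a_{3\ell}=1$, so the head is $\fr1{\be_{3\ell}}$; for $\ell=n$ one has $m_n=3n-1$ with $k_{3n-1}=n$ and $a_{3n-1}=1$, giving $\fr{q^n}{\be_{3n-1}}$; and for $\ell=n+1$ one has $m_{n+1}=3n$ with $k_{3n}=n-1$ and $a_{3n}=1$, giving $\fr{q^{n-1}}{\be_{3n}}$ (using $\be_j=D_j$). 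These are precisely the three asserted expressions. The main obstacle is the sign bookkeeping in the invariance lemma, and, upstream of it, verifying that the stage-$m_\ell$ triples of the $\Phi_n$ run genuinely coincide with $(A^{(\ell)},B^{(\ell)},C^{(\ell)})$; once these are in place the rest is a transcription of the data already assembled in \cref{section-pfD}.
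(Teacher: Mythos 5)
Your proposal is correct and follows essentially the same route as the paper: identify the quadratic triple of $\Phi_n^{(\ell)}$ (via \cref{lemma:cut} and \cref{lemma:ABCshift}) with the sign-flipped triple $(-A_{m_\ell},B_{m_\ell},-C_{m_\ell})$ occurring at stage $m_\ell$ of the run for $\Phi_n$, and conclude that the algorithm simply replays the tail of that run, so the $H$-fraction of $\Phi_n^{(\ell)}$ is the truncation with head numerator $a_{m_\ell}q^{k_{m_\ell}}$. Your explicit sign-invariance lemma for \alg is just a cleaner articulation of the paper's parenthetical remark that the minus signs on $A^{(\ell)}$ and $C^{(\ell)}$ are absorbed by the minus sign in the first numerator of \eqref{Hfracalgo}.
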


\begin{ex} Recall that the explicit $H$-fraction of $\Phi_5$ was given in \cref{ex-Phi5} and that its power series expansion reads, by \eqref{SE-Phin}:
\begin{equation*}
\Phi_5(q)=1+q+q^2+q^3+q^{4}+q^{10}+\sum_{i=11}^{+\infty}\kappa_i q^i.
\end{equation*}
By the previous result, we derive the $H$-fraction of $\Phi^{(1)}$ from \eqref{Hfrac-Phi5} by deleting the first three terms and setting the first numerator to $1$, hence:
\begin{align*}
\Phi_5^{(1)}(q)
	= & 1+q+q^2+q^{3}+q^{9}+\sum_{i=10}^{+\infty}\kappa_{i+1} q^{i}\\
	= & \fr{1}{1-q }\cfp \fr{q^4}{[q]_4}\cfp
\fr{q^5}{q^2 + 1} \cfp \fr{q^3}{1-q } \cfp \fr{q^3}{ [q]_3 }
\cfp \fr{q^5}{ [q]_4-q }\cfp
\fr{q^4}{1-q } \cfp \fr{q^2}{  [q]_2} \\
&
\cfp
\fr{q^5}{ [q]_5-q } \cfp
\fr{q^5}{1} \cfp
\fr{-q^6}{\langle 5\rangle_q + q^6   } \cfp
\fr{q^{11}}{ \langle 5\rangle_q}  
\cfp
\fr{q^{11}}{ \langle 5\rangle_q +q^6} \cfp 
\fr{-q^6}{1} \cfp
\fr{q^5}{ [q]_5-q}
\cfp \cfd
\end{align*}
If instead, we delete the first six terms and set the first numerator to $1$ we get:
\begin{align*}
\Phi_5^{(2)}(q)
	=&1+q+q^{2}+q^{8}+\sum_{i=9}^{+\infty}\kappa_{i+2} q^{i}\\
	=&\fr{1}{1-q } \cfp \fr{q^3}{ [q]_3} \cfp \fr{q^5}{  [q]_4 -q}\cfp
\fr{q^4}{1-q} \cfp
\fr{q^2}{  [q]_2} \cfp
\fr{q^5}{ [q]_5-q  } \cfp\fr{q^5}{1}\\
&  \cfp
\fr{-q^6}{   \langle 5\rangle_q +q^6 } \cfp
\fr{q^{11}}{ \langle 5\rangle_q} 
 \cfp\fr{q^{11}}{ \langle 5\rangle_q +q^6}
\cfp \fr{-q^6}{1} \cfp
	\fr{q^5}{ [q]_5-q }
\cfp \cfd
\end{align*}
A last example: deleting the first $14$ terms in \eqref{Hfrac-Phi5} and setting first numerator to $q^5$ we obtain
\begin{align*}
\Phi_5^{(5)}(q)
&=q^{5}+\sum_{i=6}^{+\infty}\kappa_{i+5} q^{i}
=\fr{q^{5}}{\langle 5\rangle_q } \cfp
\fr{q^{11}}{ \langle 5\rangle_q +q^6 } \cfp \fr{-q^6}{1}\
\cfp
\fr{q^5}{ [q]_5-q}
\cfp \cfd
\end{align*}
\end{ex}

%%%%%%%%%%%%%%%%%%%%%%%%
\subsection{The contiguity relations}
%%%%%%%%%%%%%%%%%%%%%%%%

Let us proceed with a reformulation of \cref{HFPhishifts}. Recall that the $H$-fraction components $U_n,V_n,W_n$ of $\Phi_n$ were defined in \cref{MainHF} and set
\begin{equation*}
U_n^{(\ell)}(q)=
\begin{dcases*}
\fr{1}{1-q}\cfp\CF_{i=\ell}^{n-3}\left(\fr{q^{n-i}}{[n-i]_q}\cfp \fr{q^n}{[i+2]_q-q}\cfp \fr{q^{i+2}}{1-q}\right)\\[-4pt]
\qquad\qquad\cfp\fr{q^2}{[2]_q}\cfp \fr{q^n}{[n]_q-q}\cfp \fr{q^n}{1},
&if $0≤\ell≤n-3$;\\
\fr{1}{1-q}\cfp\fr{q^2}{[2]_q}\cfp \fr{q^n}{[n]_q-q}\cfp \fr{q^n}{1},&if $\ell=n-2$;\\
\fr{1}{q},&if $\ell=n-1$;\\
0,&if $\ell=n$ or $\ell=n+1$,
\end{dcases*}
\end{equation*}
and
\begin{equation*}
V_n^{(\ell)}(q)=
\begin{dcases*}
V_n(q),&if $0≤\ell≤n-1$;\\
\fr{q^{n}}{\cron}\cfp\fr{q^{2n+1}}{\cron+q^{n+1}}\cfp\fr{-q^{n+1}}{1},&if $\ell=n$;\\
\fr{q^{n-1}}{\cron+q^{n+1}}\cfp\fr{-q^{n+1}}{1},&if $\ell=n+1$.
\end{dcases*}
\end{equation*}
Then \cref{HFPhishifts} says exactly that, for any $\ell\in\{0,1,\ldots,n+1\}$, we have:
\begin{equation*}
\Phi_n^{(\ell)}(q)=U_n^{(\ell)}(q)\cfp V_n^{(\ell)}(q)\cfp \left(W_n(q)\cfp U_n(q)\cfp W_n(q)\cfp\right)^*.
\end{equation*}
From this expression we derive immediately the following consequence.

\begin{cor}\label{contiguityPhin}\emph{Contiguity relations for $\Phi_n^{(\ell)}$.}\\
If $\ell\in\{0,1,\ldots,n-2\}$ then
\begin{equation*}
\Phi_n^{(\ell)}(q)=\fr{1}{1-q}\cfp \fr{q^{n-\ell}}{[n-\ell]_q}\cfp \fr{q^n}{[\ell+2]_q-q}\cfp q^{\ell+2}\,\Phi_n^{(\ell+1)}(q).
\end{equation*}
For the two remaining cases we have
\begin{equation*}
\Phi_n^{(n-1)}(q)=\fr{1}{1}\cfp \fr{-q^{n+1}}{\cron+q^{n+1}}\cfp q^{n+1}\,\Phi_n^{(n)}(q).
\end{equation*}
and
\begin{equation*}
\Phi_n^{(n)}(q)=\fr{q^{n}}{\cron+ q^{n+2}\,\Phi_n^{(n+1)}(q)}.
\end{equation*}
\end{cor}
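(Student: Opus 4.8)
The plan is to read all three identities directly off the reformulated expansion
\[
\Phi_n^{(\ell)}(q)=U_n^{(\ell)}(q)\cfp V_n^{(\ell)}(q)\cfp T(q),
\]
valid for every $\ell\in\{0,1,\ldots,n+1\}$ (this is exactly the content of \cref{HFPhishifts}), in which the periodic tail $T(q)$ is \emph{the same} continued fraction for all $\ell$. The only tool I need is an elementary \emph{monomial extraction} rule: if two expansions share a common tail value $\xi$, so that $\Phi_n^{(\ell+1)}(q)=\dfrac{q^{b}}{\gamma+\xi}$ while the corresponding block of $\Phi_n^{(\ell)}(q)$ reads $\dfrac{q^{a}}{\gamma}\cfp\xi=\dfrac{q^{a}}{\gamma+\xi}$ with the \emph{same} polynomial $\gamma$ and the \emph{same} $\xi$, then that block equals $q^{a-b}\,\Phi_n^{(\ell+1)}(q)$. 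Thus the whole proof reduces, for each range of $\ell$, to identifying the shared denominator $\gamma$ and shared tail $\xi$ and reading off the two numerator exponents.

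First I would handle the generic range $0\le\ell\le n-2$, where $V_n^{(\ell)}=V_n^{(\ell+1)}=V_n$ and, by construction, $U_n^{(\ell)}$ is $U_n^{(\ell+1)}$ with the single extra leading block $\frac{q^{n-\ell}}{[n-\ell]_q}\cfp\frac{q^{n}}{[\ell+2]_q-q}\cfp\frac{q^{\ell+2}}{1-q}$ prepended (for $\ell=n-2$ the third factor degenerates to the last term $\frac{q^{n}}{1}$ of $U_n$). Consequently, after the three fixed leading terms $\frac{1}{1-q}\cfp\frac{q^{n-\ell}}{[n-\ell]_q}\cfp\frac{q^{n}}{[\ell+2]_q-q}$, the remaining tail of $\Phi_n^{(\ell)}$ has the \emph{same} continuation and the \emph{same} denominator ($1-q$, or $1$ when $\ell=n-2$) as the head of $\Phi_n^{(\ell+1)}$, while the numerator exponent drops from $\ell+2$ to $0$. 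The extraction rule then turns this tail into $q^{\ell+2}\Phi_n^{(\ell+1)}$, which is precisely the first displayed formula.

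The two boundary cases need separate bookkeeping since $U_n^{(\ell)}$ and $V_n^{(\ell)}$ degenerate. For $\ell=n-1$ I would peel off the head and the first factor $\frac{-q^{n+1}}{\cron+q^{n+1}}$ of $V_n$; what remains begins $\frac{q^{2n+1}}{\cron}\cfp(\text{common tail})$, and since $\Phi_n^{(n)}$ itself begins $\frac{q^{n}}{\cron}\cfp(\text{same common tail})$ over the same denominator $\cron$, the extraction rule (with $a=2n+1$, $b=n$) rewrites this remainder as $q^{n+1}\Phi_n^{(n)}$, giving the middle formula. For $\ell=n$, peeling off the head $\frac{q^{n}}{\cron}$ leaves a tail starting $\frac{q^{2n+1}}{\cron+q^{n+1}}\cfp(\text{common tail})$; comparing with the head $\frac{q^{n-1}}{\cron+q^{n+1}}$ of $\Phi_n^{(n+1)}$ over the shared denominator $\cron+q^{n+1}$ gives (now $a=2n+1$, $b=n-1$) that this tail equals $q^{n+2}\Phi_n^{(n+1)}$, and substituting it into $\Phi_n^{(n)}=q^{n}/(\cron+\text{tail})$ yields the last formula.

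I expect the only genuine obstacle to be the exponent-and-denominator bookkeeping at the two boundaries $\ell=n-1,n$: one must check that $\cron+q^{n+1}$ (respectively $\cron$) really occurs as the common denominator $\gamma$ in both the $\ell$- and the $(\ell+1)$-expansion, and that the two numerator exponents differ by exactly $n+1$ (respectively $n+2$), so that the extraction rule applies verbatim. The generic range $0\le\ell\le n-2$ is uniform and essentially automatic once one observes the nesting $U_n^{(\ell)}=(\text{one leading block})\cfp U_n^{(\ell+1)}$.
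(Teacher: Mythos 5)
Your proposal is correct and takes essentially the same route as the paper, which obtains the corollary directly from the reformulated expansion in \cref{HFPhishifts} by exactly this kind of tail comparison; you merely make explicit the monomial-extraction step that the paper leaves implicit. (One small observation: your bookkeeping at $\ell=n-2$ and $\ell=n-1$ tacitly uses $U_n^{(n-1)}=\frac{1}{1}$, which is what \cref{HFPhishifts} and \cref{table-alg} actually give, rather than the $\frac{1}{q}$ printed in the paper's definition of $U_n^{(\ell)}$.)
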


We now read the effect of these relations on Hankel determinants. 
\begin{proof}[Proof of \cref{MainContiguity}] We treat separately three cases.
\begin{itemize}[wide]
\item Assume first that $\ell\in\{0,1,\ldots,n-2\}$, so that we have, by the previous corollary:
\begin{equation}
\Phi_n^{(\ell)}(q)=\fr{1}{1-q-q^2\,G_1(q)}\label{aaa1}
\end{equation}
where
\begin{equation}
G_1(q)=\fr{-q^{n-\ell-2}}{1+q+\cdots+q^{n-\ell-1}-q^{n-\ell}\,G_2(q)}\label{aaa2}
\end{equation}
and
\begin{equation}
G_2(q)=\fr{-q^{\ell}}{1+q^2+q^3+\cdots+q^{\ell+1}-q^{\ell+2}\,\bigl(-\Phi_n^{(\ell+1)}(q)\bigr)}.\label{aaa3}
\end{equation}
Now we point out the following result.

\begin{lem}[Lemma 2.2 of~\cite{Han16}]
\label{lem22}
Let $k$ be a nonnegative integer and let $F(q), G(q)$ be two power series such that
$$
F(q)=\frac{q^k}{1+q\,u(q) - q^{k+2}\,G(q)}
$$
where $u(q)$ is a polynomial such that $\deg(u)≤k$. Then,
$\De_j(G ) = (-1)^{\frac{k(k+1)}{2}}\,\De_{j+k+1}(F).$
\end{lem}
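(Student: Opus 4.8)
The plan is to read the hypothesis as the \emph{first step} of the $H$-fraction algorithm and then propagate the determinant formula \eqref{HankelHF} through the index shift it induces. First I would observe that $1+q\,u(q)-q^{k+2}G(q)$ has constant term $1$, so that $F$ has order exactly $k$ at $q=0$ with leading coefficient $1$. Consequently the decomposition
$$F(q)=\frac{q^k}{\bigl(1+q\,u(q)\bigr)-q^{k+2}G(q)}$$
is precisely the decomposition \eqref{defD2} produced at the initial step of the expansion of $F$, with $v_0=1$, $k_0=k$, $D_0(q)=1+q\,u(q)$ (note $\deg D_0\le k+1$ and $D_0(0)=1$), and with tail $F_1=G$. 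By the uniqueness of the $H$-fraction expansion established in \cref{section-algo}, the expansion of $G$ as a stand-alone power series coincides with its appearance as the tail $F_1$ inside that of $F$. Hence the data of $F$ and $G$ differ by a single shift: writing $v_i(\cdot),k_i(\cdot),D_i(\cdot)$ for the respective sequences, one has
$$v_i(F)=v_{i-1}(G),\qquad k_i(F)=k_{i-1}(G),\qquad D_i(F)=D_{i-1}(G)\qquad(i\ge 1),$$
together with $v_0(F)=1$ and $k_0(F)=k$.

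Next I would translate this shift into the auxiliary quantities \eqref{s-def} and \eqref{e-def}. A direct computation, in which the extra terms come solely from the contribution of $k_0(F)=k$, gives for every $i\ge 1$
$$s_i(F)=s_{i-1}(G)+(k+1),\qquad \eps_i(F)=\eps_{i-1}(G)+\tfrac{k(k+1)}{2}.$$
In particular $\cS(F)=\{0\}\cup\bigl(\cS(G)+(k+1)\bigr)$, so that for $j\ge 0$ one has $j\in\cS(G)$ if and only if $j+k+1\in\cS(F)$. This settles the vanishing case at once: if $\De_j(G)=0$, i.e. $j\notin\cS(G)$, then $j+k+1\notin\cS(F)$, hence $\De_{j+k+1}(F)=0$ and both sides of the asserted identity vanish.

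For the nonzero case I would take $j=s_p(G)$, whence $j+k+1=s_{p+1}(F)$, and apply \eqref{HankelHF} to both series. Using the relation $s_{p+1}(F)-s_i(F)=s_p(G)-s_{i-1}(G)$ valid for $1\le i\le p$, the product of the $v_i(F)$ occurring in $\De_{s_{p+1}(F)}(F)$ matches term by term the product of the $v_j(G)$ occurring in $\De_{s_p(G)}(G)$, the only additional factor being $v_0(F)^{s_{p+1}(F)}=1$. Combined with $\eps_{p+1}(F)=\eps_p(G)+\tfrac{k(k+1)}{2}$, this yields
$$\De_{s_{p+1}(F)}(F)=(-1)^{k(k+1)/2}\,\De_{s_p(G)}(G),$$
which is the desired relation because $(-1)^{k(k+1)/2}$ is its own inverse; the degenerate value $p=0$ is consistent with the convention $\De_0(G)=1$, since then $\De_{k+1}(F)=(-1)^{\eps_1(F)}=(-1)^{k(k+1)/2}$. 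There is no genuine obstacle in this argument: the only point requiring care is the bookkeeping of the single index shift, and the observation that the leading datum $v_0(F)=1$ makes the two $v$-products coincide, so that the entire sign discrepancy is concentrated in the single term $\eps_1(F)=\tfrac{k(k+1)}{2}$.
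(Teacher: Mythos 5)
Your argument is correct. Note that the paper itself does not prove this statement: it is imported verbatim as Lemma~2.2 of~\cite{Han16}, so there is no in-paper proof to compare against. What you have written is precisely the natural derivation from the machinery the paper recalls in \cref{sec-bg} and \cref{section-algo}: the hypothesis exhibits the unique first step \eqref{defD2} of the $H$-fraction algorithm applied to $F$, with $v_0=1$, $k_0=k$, $D_0=1+q\,u(q)$ (legitimate since $\deg(qu)\le k+1$ and $D_0(0)=1$) and tail $F_1=G$; uniqueness of the expansion then forces the data of $F$ to be the data of $G$ shifted by one index, and the bookkeeping $s_{i}(F)=s_{i-1}(G)+k+1$, $\eps_{i}(F)=\eps_{i-1}(G)+\tfrac{k(k+1)}{2}$ pushes formula \eqref{HankelHF} through exactly as you describe, including the vanishing case via $\cS(F)=\{0\}\cup(\cS(G)+k+1)$ and the degenerate case $p=0$. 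The only points worth double-checking, which you handle correctly, are that $v_0(F)=1$ (so the extra factor $v_0(F)^{s_{p+1}(F)}$ is harmless) and that the exponent shift $s_{p+1}(F)-s_i(F)=s_p(G)-s_{i-1}(G)$ makes the two $v$-products match term by term, leaving the whole sign in $\eps_1(F)=\tfrac{k(k+1)}{2}$.
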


Applying this lemma successively to \eqref{aaa3}, \eqref{aaa2} and \eqref{aaa1} we get
\begin{align*}
\De_j(\Phi_n^{(\ell+1)})&=(-1)^j\De_j(-\Phi_n^{(\ell+1)})\\
&=(-1)^j(-1)^{\frac{\ell(\ell+1)}{2}}\De_{j+\ell+1}(-G_2)\\
&=(-1)^j(-1)^{\frac{\ell(\ell+1)}{2}}(-1)^{j+\ell+1}\De_{j+\ell+1}(G_2),
\end{align*}
as well as
\begin{align*}
\De_{j+\ell+1}(G_2)&=(-1)^{\frac{(n-\ell-2)(n-\ell-1)}{2}}\De_{j+n}(-G_1)\\
&=(-1)^{\frac{(n-\ell-2)(n-\ell-1)}{2}}(-1)^{j+n}\De_{j+n}(G_1)
\end{align*}
and
\begin{equation*}
\De_{j+n}(G_1)=\De_{j+n+1}(\Phi_n^{(\ell)}).
\end{equation*}
Gathering these formulas we obtain 
\begin{equation*}
\De_j(\Phi_n^{(\ell+1)})=(-1)^{f(n,\ell,j)}\De_{j+n+1}(\Phi_n^{(\ell)})
\end{equation*}
where
\begin{align*}
f(n,\ell,j)&=j+\frac{\ell(\ell+1)}{2}+j+\ell+1+\frac{(n-\ell-2)(n-\ell-1)}{2}+j+n\\
&=\frac{\ell(\ell+3)}{2}+1+\frac{(n-\ell-2)(n-\ell-1)}{2}+3j+n.
\end{align*}
But
\begin{align*}
(n-\ell-2)(n-\ell-1)&=n(n-\ell-2)-\ell(n-\ell-2)-(n-\ell-2)\\
&=n(n-2\ell-3)+\ell(\ell+3)+2,
\end{align*}
hence
\begin{align*}
f(n,\ell,j)&=\frac{\ell(\ell+3)}{2}+1+\fr{n(n-2\ell-3)}{2}+\fr{\ell(\ell+3)}{2}+1+3j+n\\
&\equiv\fr{n(n-2\ell-3)}{2}+j+n\mod{2}\\
&\equiv\fr{n(n+2\ell-1)}{2}+j\mod{2},
\end{align*}
and this last formula implies the contiguity relation \eqref{contiguity} when $\ell=0,1,\ldots,n-2$.

\item Assume now that $\ell=n-1$. By \cref{contiguityPhin} and \eqref{defnq} we have
\begin{equation*}
\Phi_n^{(n-1)}(q)=\fr{1}{1-q^2 G_3(q)}
\end{equation*}
with
\begin{equation*}
G_3(q)= \fr{q^{n-1}}{1+q^2+q^3+\cdots+q^{n-1}+2q^n- q^{n+1}\,\Phi_n^{(n)}(q)}.
\end{equation*}
Applying \cref{lem22} we get
\begin{align*}
\De_j(\Phi_n^{(\ell)})&=(-1)^j \De_j(-\Phi_n^{(\ell)})\\
&=(-1)^j (-1)^{\fr{n(n-1)}{2}}\De_{j+n}(G_1)\\
&=(-1)^j (-1)^{\fr{n(n-1)}{2}}\De_{j+n+1}(\Phi_n^{(n-1)}).
\end{align*}
It is easily seen that this is exactly \eqref{contiguity} for $\ell=n-1$.

\item Lastly, assume  that $\ell=n$. By \cref{contiguityPhin} and \eqref{defnq} we have
\begin{equation*}
\Phi_n^{(n)}(q)=\fr{q^n}{1+q^2+q^3+\cdots+q^{n-1}+2q^n- q^{n+1}-q^{n+2}\,\bigl(-\Phi_n^{(n+1)}(q)\bigr)}.
\end{equation*}
Again, applying \cref{lem22} yields 
\begin{equation*}
\De_j(\Phi_{n}^{(n+1)})=(-1)^j \De_j(-\Phi_{n}^{(n+1)})\\
=(-1)^j (-1)^{\fr{n(n+1)}{2}}\De_{j+n+1}(\Phi_n^{(n)})
\end{equation*}
which is the same as \eqref{contiguity} when $\ell=n$.
\end{itemize}
This finishes our proof of \cref{MainContiguity} in all cases.
\end{proof}

\begin{ex} To illustrate the result of \cref{MainContiguity}, let us consider as usual the case $n=5$. We have, for all $j≥0$:
\begin{align*}
\De_j^{(1)}&=(-1)^j\De_{j+6},&
\De_j^{(2)}&=(-1)^{j+1}\De_{j+6}^{(1)}=-\De_{j+12},\\
\De_j^{(3)}&=(-1)^{j}\De_{j+6}^{(2)}=(-1)^{j+1}\De_{j+18},&
\De_j^{(4)}&=(-1)^{j+1}\De_{j+6}^{(3)}=\De_{j+24},\\
\De_j^{(5)}&=(-1)^{j}\De_{j+6}^{(4)}=(-1)^j\De_{j+30},&
\De_j^{(6)}&=(-1)^{j+1}\De_{j+6}^{(5)}=-\De_{j+36}.\\
\end{align*}
We see that all sequences $\De^{(\ell)}$ are  expressed in terms of the first (non shifted) one $\De=\De^{(0)}$ (remind \cref{fig:Phi5}), using a translation of $n+1=6$ units and a function which changes possibly the signs. \Cref{fig:Phi5-1,fig:Phi5-2,fig:Phi5-3} highlight in particular the translation of the center of the symmetry (the blank circle) when $\ell=1,2,3$.
\end{ex}

\begin{figure}[ht]
  \includegraphics[width=\linewidth]{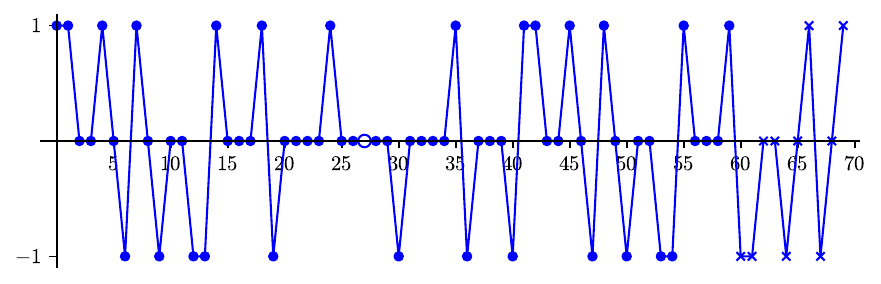}
  \caption{First $70$  determinants in Hankel sequence $\De^{(1)}(\Phi_5)$.}
  \label{fig:Phi5-1}
\end{figure}

\begin{figure}
  \includegraphics[width=\linewidth]{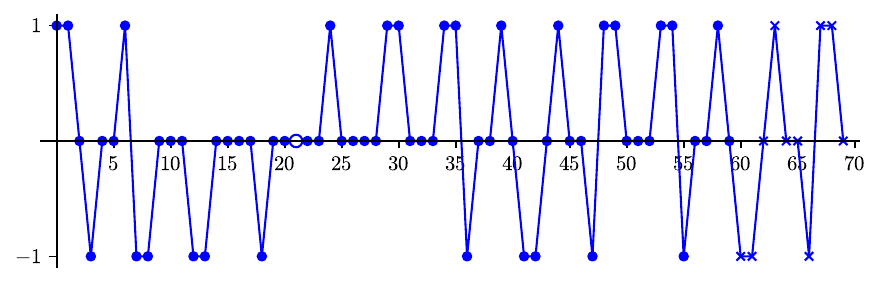}
  \caption{First $70$  determinants in Hankel sequence $\De^{(2)}(\Phi_5)$.}
  \label{fig:Phi5-2}
\end{figure}

\begin{figure}
  \includegraphics[width=\linewidth]{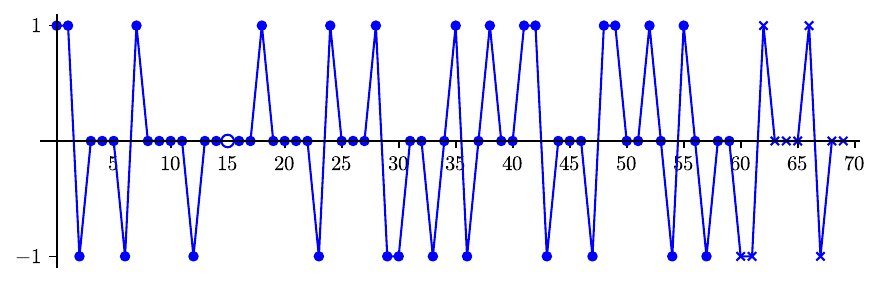}
  \caption{First $70$  determinants in Hankel sequence $\De^{(3)}(\Phi_5)$.}
  \label{fig:Phi5-3}
\end{figure}

%%%%%%%%%%%%%%%%%%%%%%%%
\subsection{The shifted case, modulo $p$}
%%%%%%%%%%%%%%%%%%%%%%%%

We  end our article by proving \cref{MainModulop}. Fix a prime number $p$ and let $\ell≥0$. Since $\Phi_n^{(\ell)}(q)$ is a power series with integer coefficients, according to Theorem~1.1 of \cite{Han16}, the statement of \cref{MainModulop} reduces to prove that  $\Phi_n^{(\ell)}$ satisfies a quadratic functional equation
\begin{equation*}
A^{(\ell)} +B^{(\ell)} \Phi_n^{(\ell)} + C^{(\ell)} \bigl(\Phi_n^{(\ell)}\bigr)^2 =0
\end{equation*}
with three polynomials $A^{(\ell)},B^{(\ell)},C^{(\ell)}\in\Z[X]$ such that
\begin{equation*}
B^{(\ell)}(0)=1,\quad C^{(\ell)} ≠0, \quad C^{(\ell)} (0)=0.
\end{equation*}
Let us actually prove that such an equation holds with
\begin{equation}\label{eq:condpol}
B^{(\ell)}(0)=1,\quad C^{(\ell)}=q^{\ell+1}.
\end{equation}
We proceed by induction on $\ell≥0$. 
By \eqref{FE-Phin}, it is clear that conditions \eqref{eq:condpol} are fulfilled for $\ell=0$. Now, assume that they are true for some $\ell≥0$. By \cref{lemma:cut} we know that $\Phi_n^{(\ell+1)}$ satisfies the quadratic equation
\begin{equation*}
A^{(\ell+1)} +B^{(\ell+1)} \Phi_n^{(\ell+1)} + C^{(\ell+1)} \bigl(\Phi_n^{(\ell+1)}\bigr)^2 =0
\end{equation*}
with $A^{(\ell+1)},B^{(\ell+1)},C^{(\ell+1)}\in\Z[X]$ satisfying 
\begin{equation*}
B^{(\ell+1)}=B^{(\ell)} + 2\Phi_n^{(\ell)}(0) C^{(\ell)},\quad C^{(\ell+1)}=q C^{(\ell)}.
\end{equation*} 
By induction hypothesis, this gives 
\begin{equation*}
C^{(\ell+1)}=q^{\ell+2},\quad
B^{(\ell+1)}(0)=B^{(\ell)}(0) + 2\Phi_n^{(\ell)}(0) C^{(\ell)}(0)=1,
\end{equation*}
as required.

%%%%%%%%%%%%%%%%%%%%%%%%%%%%%%%%%%%%%%%%

\newcommand{\etalchar}[1]{$^{#1}$}
\providecommand{\bysame}{\leavevmode\hbox to3em{\hrulefill}\thinspace}

\end{document}